\definecolor{beige}{rgb}{0.96, 0.96, 0.86}
\definecolor{airforceblue}{rgb}{0.36, 0.54, 0.66}
\definecolor{antiquefuchsia}{rgb}{0.57, 0.36, 0.51}
\definecolor{awesome}{rgb}{1.0, 0.13, 0.32}
\newtheorem{theorem}{Theorem}[section]
\newtheorem{definition}[theorem]{Definition}
\newtheorem{proposition}[theorem]{Proposition}
\newtheorem{lemma}[theorem]{Lemma}
\newtheorem{corollary}[theorem]{Corollary}
\newtheorem{thm}[theorem]{Theorem}
\newtheorem{lem}[theorem]{Lemma}
\newtheorem{prop}[theorem]{Proposition}
\theoremstyle{definition}
\newtheorem{remark}[theorem]{Remark}
\newtheorem{example}[theorem]{Example}
\newtheorem{rem}[theorem]{Remark}
\newtheorem{assum}[theorem]{Assumption}
\newtheorem{assumptions}[theorem]{Assumptions}
\numberwithin{equation}{section}
\renewcommand{\epsilon}{\varepsilon}
\renewcommand{\tilde}{\widetilde}
\newcommand{\T}{\mathsf{T}}
\newcommand{\A}{\operatorname{\mathsf{A}}}
\newcommand{\B}{\operatorname{\mathsf{B}}}
\newcommand{\Q}{\operatorname{\mathsf{Q}}}
\newcommand{\M}{\operatorname{\mathsf{M}}}
\newcommand{\X}{\operatorname{\mathsf{X}}}
\newcommand{\Y}{\operatorname{\mathsf{Y}}}
\newcommand{\Z}{\operatorname{\mathsf{Z}}}
\newcommand{\Proc}{\operatorname{\mathsf{P}}}
\newcommand{\sigmab}{\boldsymbol{\sigma}}
\definecolor{rose}{rgb}{1.0,0.33,0.64}
\newcommand{\we}{\wedge}
\newcommand{\ol}{\overline}
\newcommand{\ul}{\underline}
\newcommand{\ola}{\overleftarrow}
\newcommand{\ba}{\begin{array}}
\newcommand{\ea}{\end{array}}
\newcommand{\be}{\begin{equation}}
\newcommand{\ee}{\end{equation}}
\def\dbE{\mathbb{E}}
\def\dbF{\mathbb{F}}
\def\dbG{\mathbb{G}}
\def\dbN{\mathbb{N}}
\def\dbP{\mathbb{P}}
\def\dbR{\mathbb{R}}
\def\dbS{\mathbb{S}}
\def\a{\alpha}
\def\b{\beta}
\def\g{\gamma}
\def\d{\delta}
\def\e{\varepsilon}
\def\n{\nu}
\def\t{\tau}
\def\f{\varphi}
\def\th{\theta}
\def\o{\omega}
\def\Th{\Theta}
\def\O{\Omega}
\def\cD{{\cal D}}
\def\cE{{\cal E}}
\def\cG{{\cal G}}
\def\cJ{{\cal J}}
\def\cP{{\cal P}}
\def\cT{{\cal T}}
\def\ch{\textsc{h}}
\def\no{\noindent}
\def\ms{\medskip}
\def\q{\quad}
\def\pa{\partial}
\def\cd{\cdot}
\def\1{{\bf 1}}
\begin{document}


\title{Viscosity solutions of path-dependent PDEs with randomized time}

\date{}

\author{Zhenjie  {\sc Ren}
\footnote{Universit\'e Paris-Dauphine, PSL Research University, CNRS, UMR [7534], Ceremade, 75016 Paris, France, ren@ceremade.dauphine.fr.}
\and
Mauro {\sc Rosestolato}
\footnote{CMAP, Ecole Polytechnique Paris, mauro.rosestolato@polytechnique.edu. Research supported by ERC 321111 Rofirm.
\newline
The authors are sincerely grateful to Nizar Touzi for valuable discussions.
}
}

\maketitle



\begin{abstract}
We introduce a new definition of viscosity solution to path-dependent partial differential equations, which is a slight modification of the definition introduced in \cite{EKTZ}. With the new definition, we prove the two important results till now missing in the literature, namely, a general stability result and a comparison result for semicontinuous sub-/super-solutions. As an application, we prove the existence of viscosity solutions using the Perron method. Moreover,
 we connect viscosity solutions of path-dependent PDEs with
viscosity solutions of partial differential equations on Hilbert spaces.
\vskip 5pt
\noindent\textbf{Keywords:}
Viscosity solution;
Path-dependent partial differential equations;
Partial differential equations in infinite dimension.

\vskip 5pt
\noindent\textbf{AMS 2010 subject classification:} 35K10; 35R15; 49L25; 60H30.
\end{abstract}

\section{Introduction}

This paper 
studies 
 viscosity solutions 
of
 the fully nonlinear path-dependent partial differential equation
\begin{equation}
  \label{PPDEintro}
-\pa_t u(t,\o)-G\big(t,\o, u(t,\o), \pa_\o u(t,\o), \pa^2_{\o\o} u(t,\o)\big) = 0\q\mbox{on}\q [0,T)\times \O.
\end{equation}
Here, $T>0$ is a given terminal time and $\o\in\O$ is a continuous path from $[0,T]$ to $\dbR^m$ starting from the origin. The path derivatives $\pa_t, \pa_\o, \pa^2_{\o\o}$ were first introduced in the work of Dupire \cite{Dupire}. See also \cite{CF} for the related It\^o calculus.  Such equations arise naturally in many applications. For example, the dynamic programming equation associated
with a stochastic control problem of
 non-Markov diffusions (see \cite{ETZ1}) and the one associated 
with a stochastic differential game with non-Markov dynamics (see \cite{PZ}) both fall in the class of equation \eqref{PPDEintro}. The notion of nonlinear path-dependent partial differential equations was first proposed by Peng \cite{Peng}. We also refer to Peng and Wang \cite{PW} for a study on classical solutions of semilinear equations.

The notion of viscosity solutions studied in this paper is a slight modification over the one introduced in Ekren et al.\ (\cite{EKTZ}) in the semilinear context and further extended to the fully nonlinear case in \cite{ETZ1,ETZ2}. Following the lines of the classical Crandall and Lions notion of viscosity solutions  (\cite{CL}), supersolutions and subsolutions are defined through tangent test functions. However, while Crandall and Lions consider pointwise tangent functions, the tangency conditions in the path-dependent setting is in the sense of the expectation with respect to an appropriate class of probability measures $\cP$. We refer to \cite{Ren2014} for an overview, and to \cite{CFGRT,Ekren, AA, Keller,  Ren-ell, RT, Ren, Ren2015} for some of the generalizations. 

Regardless of the successful development mentioned above, there 
were some 
 difficulties for the theory of  viscosity solutions 
coming from 
 the definition adopted in \cite{EKTZ}. First, a good stability result was missing. In \cite{ETZ1} the authors proved a stability result in the sense that if a sequence of uniformly continuous solutions $u^n$ uniformly converges to a uniformly continuous function $u$, then $u$ is also a solution. The assumptions of the uniform continuity and the uniform convergence are often too strong for applications, for example, for the Perron method to prove the existence of viscosity solution.
Secondly,
also a
comparison result for semicontinuous viscosity sub-/super-solutions
was missing. 
The aim of the present paper is to fill these two theoretical gaps. We do this by slightly modifying the definition in \cite{EKTZ}. This modification is sufficient to let us overcome the technical difficulties for proving stability and comparison results, but does not compromise the other results till now obtained in the literature, such as existence.

 In the previous definition, 
adopted in 
\cite{EKTZ},
the test function $\f$ is urged to be tangent to the (sub)solution $u$ at a point (say $0$) in the sense that
\begin{equation}
  \label{intro-os}
(u-\f)(0) =\max_{\t\in\cT} \sup_{\dbP\in \cP}\dbE^\dbP\big[(u-\f)(\t, \B)\big],
\end{equation}
where $\cT$ is the set of all stopping times taking values in $[0,T]$, and $\cP$ is a family of probability measures on the path space $\O$ on which $\B$ is the canonical process. In the arguments for proving the stability and the comparison, we often need to solve the optimal stopping problem on the right hand side of \eqref{intro-os}, which is not a simple task and requires
the uniform continuity of $u$ (see \cite{ETZ-os}). This turns out to be one of the main difficulties in improving the stability and the comparison results. In the present paper, in order to overcome this fundamental difficulty, we randomize the optimal stopping problem, and the new definition reads
\begin{equation*}
  (u-\f)(0) = \sup_{\dbP\in \tilde\cP}\dbE^\dbP\big[(u-\f)(\T, \B)\big],
\end{equation*}
where $\tilde\cP$ is a family of probability measures on the time-path product space $[0,T]\times \O$ on which $(\T,\B)$ is the canonical process. It turns out that the randomized problem can be solved for less regular functions $u$, and is more stable. With this change of definition, we manage to prove a general stability result for semicontinuous viscosity solutions
(Theorem~\ref{2018-05-17:00}). The recipe of our proof is composed of the classical argument for stability in \cite{user} and the measurable selection theorem. 
Moreover, by using
the stability result, we are also able
 to prove the existence of viscosity solution through Perron's method
(Theorem~\ref{thm: Perron}).
Further, 
we prove a comparison result for semicontinuous sub-/super-solutions under some strong assumptions
(Theorem~\ref{2018-06-18:02}),
by approximating semicontinuous solutions with 
Lipschitz continuous 
solutions to approximating equations.

Another major contribution of this paper is to connect the path-dependent partial differential equation \eqref{PPDEintro} with the partial differential equation
\begin{equation}
  \label{introHilbert}
  -u_t-\langle Ax,D_x u\rangle - G(t,x,u,D_{x_0}u,D^2_{x_0x_0}u)=0\quad \mbox{on }\quad(0,T)\times H,
\end{equation}
where $H$ is a Hilbert space into which the path space $\O$ can embed, $A$ is an unbounded operator, and
$D_{x_0},D_{x_0x_0}^2$ are the first- and second-order differentials with respect to a finite dimensional subspace of $H$.
Both equations
\eqref{PPDEintro}
and
\eqref{introHilbert}
 can be used to characterize the value function of non-Markov stochastic control problem. In this paper, we prove that a viscosity solution to the path-dependent equation
\eqref{PPDEintro}, under some regularity assumption, is also a viscosity solution to the corresponding equation
\eqref{introHilbert}
(Theorem~\ref{2017-01-31:05}).
The theory of viscosity solutions for partial differential equations on Hilbert spaces
 (we mainly refer to \cite{Fabbri})
is designed of a large class of equations not rescrited to those of delay type, as
\eqref{introHilbert}.
We notice that, 
till now,
when applied to PDEs 
of the form
\eqref{introHilbert},
such a theory can deliver a comparison result under more general assumption on the nonlinearity function $G$, but only for more regular sub-/super-solutions. 
Moreover,
 the theory of  path-dependent equations
as here developed
 can treat solutions (semi)continuous in
the $L^\infty$-norm, which 
cannot be settled in a 
 Hilbert space framework.

\smallskip
The rest of the paper is organized as follows. Section \ref{sec:notation} introduces the main notations. Section~\ref{sec:def} presents the modified definition of the viscosity solutions to the path-dependent partial differential equations. In Section \ref{sec:stability} we prove the  stability result 
(Theorem~\ref{2018-05-17:00}), and using it
 in Section \ref{sec:perron} we prove the existence of viscosity solution with Perron's method
(Theorem~\ref{thm: Perron}). In Section \ref{sec:comparison} we show the comparison result for semicontinuous solutions
(Theorem~\ref{2018-06-18:02}). In Section \ref{sec:hilbert} we clarify the connection between the path-dependent equation and the
correponding  equation on the Hilbert space
(Theorem~\ref{2017-01-31:05}). Finally, we complete some proofs in Appendix.

\section{Notations}\label{sec:notation}

\paragraph{Canonical Space.}
Let $m>0$ be a natural number, $T>0$ be a real number.
Define
$$
\Omega\coloneqq \{\o\in C([0,T],\mathbb{R}^m)\colon \o(0)=0\}.
$$
We denote by
$|\cdot|$ the Euclidean norm in $\mathbb{R}^m$ and by
 $|\cdot|_\infty$ the uniform norm on $\Omega$.
In this paper, we study equations set on the spacetime space:
\begin{equation*}
\Theta\coloneqq [0,T]\times\Omega.
\end{equation*}
For technical reasons, we also often work on the enlarged canonical space:
\begin{equation*}
\tilde \Theta\coloneqq 
\Theta\times \Omega\times \Omega\times 
C_0([0,T],\mathbb{S}^m)
\end{equation*}
where $\mathbb{S}^m$
is the space of symmetric $m\times m$ real matrices 
endowed with the supremum norm
and
$$
C_0([0,T],\mathbb{S}^m)\coloneqq \{f\in C([0,T],\mathbb{S}^m)\colon f(0)=0\}.
$$

\no Hereafter, we will reserve the
letter  $\omega$ for a generic element of $\Omega$,
the letter $\theta$ for the generic couple 
$\theta=(t,\omega)\in \Theta$,
and the letter $\vartheta$ for the generic element
$\vartheta=(\theta,a,\mu,q)\in \tilde \Theta$.
We introduce on $\Theta$ the pseudo-metric $d_\infty$ defined by
\begin{equation*}
  d _\infty(\theta,\theta')\coloneqq |t-t'|+|\omega_{t\wedge \cdot}-\omega_{t'\wedge \cdot}|_\infty.
\end{equation*}

\begin{rem}
In this paper, without being otherwise stated, the (semi-)continuity on the canonical spaces is under $|\cd|_\infty$.  The (semi-)continuity related to other pseudo-metrics will be explicitly expressed.    
\end{rem}

\begin{rem}
Note the following facts:
\begin{itemize}
\item Let $\mathbb{F}\coloneqq\{\mathcal{F}_t\}_{t\in[0,T]}$ denote the canonical filtration on $\Omega$. All $d_\infty$-continuous functions are $\dbF$-progressively measurable.
\item $\lim_{n\rightarrow\infty } |\th^n -\th|_\infty =0$ implies that $\lim_{n\rightarrow \infty} d_\infty(\th^n,\th)=0$, and thus all $d_\infty$-continuous functions are continuous.
\end{itemize}
\end{rem}

Let 
$\mathbb{F}^{\mathbb{S}}\coloneqq\{\mathcal{F}^{\mathbb{S}}_t\}_{t\in[0,T]}$ denote the canonical filtration on $C_0([0,T],\mathbb{S}^m)$.
We introduce on $\tilde \Theta$ the filtration 
$\mathbb{G}\coloneqq \{\mathcal{G}_t\}_{t\in[0,T]}
$ defined by
\begin{equation*}
\mathcal{G}_t\coloneqq  
 \left( 
   \sigma\left\{
     [0,r],\ r\leq t
   \right\}  \otimes \mathcal{F}_t  \right) 
 \otimes \mathcal{F}_t \otimes \mathcal{F}_t\otimes \mathcal{F}_t^{\mathbb{S}}
\qquad \forall t\in [0,T].
\end{equation*}

\paragraph{Shifted functions}.
For $\omega,\omega'\in \Omega$, $t\in[0,T]$, we define
\begin{equation*}
  (\omega  \otimes_t \omega')_s\coloneqq
  \begin{dcases}
    \omega_s & \mbox{if }s\in[0,t]\\
    \omega_t+\omega'_{s-t}    & \mbox{if }s\in(t,T].
  \end{dcases}
\end{equation*}
For any function $\xi\colon \Omega\rightarrow R$ taking values in some set $R$, and for any $\theta=(t,\omega)\in \Theta$, we denote
\begin{equation*}
  \xi^\theta(\omega')\coloneqq \xi(\omega \otimes _t  \omega')\qquad \forall \omega'\in \Omega.
\end{equation*}
Similarly, given a function $u\colon \Theta\rightarrow R$, we denote, for $\theta\in \Theta$,
\begin{equation*}
  u^\theta(t',\omega')\coloneqq 
u\big((t+t')\wedge T,
\omega \otimes_t \omega'\big)\qquad \forall 
\theta'\in \Theta.
\end{equation*}

\noindent Clearly, if $\xi$ is $\mathcal{F}_T$-measurable then $\xi^\theta$ is $\mathcal{F}_{T-t}$-measurable, and if $X$ $\mathbb{F}$-adapted then so is $X^\theta$.

Similarly, we can also shift functions defined on the enlarged canonical space $\tilde \Th$. For $t'\in[0,T]$,
$\theta\in \Theta$, $\vartheta\in \tilde \Theta$, we denote
\begin{equation*}
\theta_{t'\wedge \cdot}\coloneqq
(t'\wedge t,\omega_{t' \wedge\cdot}),
\qquad
\vartheta_{t'\wedge \cdot}= ( \theta_{t'\wedge \cdot},a_{t'\wedge \cdot},\mu_{t'\wedge \cdot},q_{t'\wedge \cdot}).
\end{equation*}
Given a function $v\colon \tilde\Theta\rightarrow R$ , we define for $\vartheta\in \tilde\Theta$ and $s\in [0,T]$
\begin{equation*}
v^{s,\vartheta}(\vartheta') \coloneqq  v^{\vartheta_{s\we\cd}}(\vartheta') 
= v\Big(\big(t\we s+t'\big)\we T, \o\otimes_{t\we s} \o', a\otimes_{t\we s} a', \mu\otimes_{t\we s}\mu', q\otimes_{t\we s} q'\Big)\qquad \forall 
\vartheta'\in \tilde\Theta.
\end{equation*}
Further, for a $\dbG$-stopping time $\t$, we define
\begin{equation*}
v^{\t, \vartheta} \coloneqq  v^{\t(\vartheta),\vartheta}.
\end{equation*}
In particular, note that $v^{\t,\vartheta} = v^{t\we\t,\vartheta}$.

\paragraph{Probability Space.}
We denote by $\mathcal{ P}$ the set of probability measures on $(\tilde \Theta,\mathcal{G}_T)$.
Unless otherwise specified, the set $\mathcal{ P}$ is always  endowed with the topology of the weak convergence.
We recall that
 $\tilde \Theta$ is a Polish space with respect to the product topology, hence $\mathcal{ P}$ is a Polish space too.
We denote by $\T,\B,\A,\M,\Q$ the first, second, third, fourth, and fifth projection of $\tilde \Theta$,
respectively
\begin{align*}
    \T\colon \tilde \Theta\rightarrow \mathbb{R},\ \vartheta \mapsto t\qquad
\ \B\colon \tilde \Theta\rightarrow \Omega,
\ \vartheta \mapsto \omega\qquad
\  \A\colon \tilde \Theta\rightarrow \Omega,
\ \vartheta \mapsto a\\
\  \M\colon \tilde \Theta\rightarrow \Omega,
\ \vartheta \mapsto \mu\qquad
\ \Q\colon \tilde \Theta\rightarrow C_0([0,T],\mathbb{S}^m),\ \vartheta \mapsto q.
\end{align*}
We stress the fact that $\T$ is a $\mathbb{G}$-stopping time and  $\B,\A,\M,\Q$ are all $\mathbb{G}$-adapted processes. In order to simplify the notations, we also denote the quintuple
$(\T,\B,\A,\M,\Q)$ by $\X$,
$$\X\coloneqq (\T,\B,\A,\M,\Q).$$

\begin{rem}\label{rem:2shifts}
Using the notations defined previously, we note that 
\begin{equation*}
\T^{\t,\vartheta}(\vartheta') = \big(t\we\t(\vartheta) +t'\big)\we T,\q
\B^{\t,\vartheta}(\vartheta') = \o\otimes_{t\we\t(\vartheta)}\o', \q 
\forall \vartheta,\vartheta'\in \tilde\Th,
\end{equation*}
where $\t$ is a $\dbG$-stopping time. Further, given a function $u\colon\Th\rightarrow R$, we have
\begin{equation*}
u\big(\T^{\t,\vartheta}(\vartheta'),\B^{\t,\vartheta}(\vartheta')\big)
= u\Big( \big(t\we\t(\vartheta) +t'\big)\we T, \o\otimes_{t\we\t(\vartheta)}\o' \Big) = u^{t\we\t(\vartheta),\o}(\th').
\end{equation*}
\end{rem}

In this paper, we will use the following probability family on the enlarged canonical space $\tilde \Th$ to define the viscosity solutions to path-dependent PDEs.

\begin{definition}
  For  $L>0$, 
  we define
  the subset of $\tilde \Theta$
  \begin{equation}\label{2016-09-27:01}
\begin{multlined}[c][0.92\displaywidth]          \tilde \Theta_{L}\coloneqq
 \left\{ \vartheta\in \tilde \Theta\colon  t\in [0,T],\ \omega=a+\mu,
\ a, q \in AC([0,T],\mathbb{R}^m),
\
 | \dot a|_\infty\le L
,\ |  \dot q|_\infty \le L
 \right\},
\end{multlined}
\end{equation}
and the set of probabilities
\begin{equation*}
  \mathcal{ P}_{L}\coloneqq 
 \left\{ \mathbb{P}
\in \mathcal{ P}
\colon \mathbb{P}(\tilde \Theta_{L})=1,\ \M\mbox{ is a
square-integrable }
\mathbb{P}\mbox{-martingale},
\ \langle \M\rangle=\Q\ \mathbb{P}\mbox{-a.s.}
 \right\}.
\end{equation*}
\end{definition}

\no Using the canonical processes, we have for all 
 $\mathbb{P}\in \mathcal{ P}_{L}$ that
\begin{gather}
\T \in [0,T],\
    \B=\A+\M\     \mathbb{P}\mbox{-a.s.}\label{2016-09-28:00}\\
\M\mbox{ is a $\mathbb{P}$-martingale}
\label{2016-09-28:02}\\
\langle \M\rangle=\Q
\
      \mathbb{P}\mbox{-a.s.}
\label{2016-09-29:01}\\
\A\in AC([0,T],\mathbb{R}^m)
\mbox{ and }
|\dot\A|_\infty \le  L\ 
\mathbb{P}\mbox{-a.s.}\label{2016-09-28:03}\\
\Q\in AC([0,T],\mathbb{R}^{m\times m})\mbox{ and }
|\dot\Q|_\infty
\leq L\ 
\mathbb{P}\mbox{-a.s.}
\label{2016-09-28:04}
\end{gather}

\no Recall that $\T$ is a $\dbG$-stopping time, so $\B_{\T\wedge \cdot},\A_{\T\wedge \cdot},\M_{\T\wedge \cdot}$ are $\mathbb{G}$-adapted and $\M_{\T\wedge \cdot}$ is a $\mathbb{P}$-martingale.

We introduce the sublinear and superlinear expectation operators associated with $\mathcal{ P}_{L}$:
\begin{equation}
  \label{eq:2017-02-23:02}
  \overline{\mathcal{E}}_{L}\coloneqq \sup_{\mathbb{P}\in \mathcal{P}_{L}}\dbE^\dbP
\qquad\qquad
  \underline{\mathcal{E}}_{L}\coloneqq \inf_{\mathbb{P}\in \mathcal{P}_{L}}\dbE^\dbP.
\end{equation}


\section{Definition of $\cP_L$-viscosity solution}\label{sec:def}

In this paper, we consider the fully nonlinear parabolic path-dependent PDE (PPDE):
\begin{equation}
  \label{eq:PPDEgeneral}
  - \partial_tu-G(\theta,u, \partial _\omega u,\partial^2 _{\omega\omega} u)=0.
\end{equation}
In \cite{Ren2014,Ren}, it is showed that one can define viscosity solutions for PPDEs via jets.
In this manuscript, we start directly from the definition via jets.
For $\alpha\in \mathbb{R},\ \beta\in \mathbb{R}^m,\ \gamma\in \mathbb{S}^m$, let
\begin{equation}
  \label{eq:2017-02-23:01}
  \varphi^{\alpha,\beta,\gamma}(\theta)\coloneqq \alpha t+\langle \beta, \omega_t\rangle+\frac{1}{2} \langle \gamma \omega_t,\omega_t\rangle\qquad \forall \theta\in \Theta,
\end{equation}
where $\langle\cdot,\cdot\rangle$ denotes the standard scalar product on $\mathbb{R}^m$.

%

A further tool for the type of localization that we will implement in the definition of viscosity solution if the function $\ch_\delta$.
For any $\delta\in (0,T]$, define the function $\ch_\delta\colon \tilde{ \Theta}\rightarrow [0,T]$ by
\begin{equation}\label{2018-01-30:05}
  \ch_\delta(\vartheta)\coloneqq
  \inf\{s>0\colon d_\infty\big((s,\o),0\big)=s+|\omega_{s\wedge \cdot}|_\infty\geq \delta\}.
\end{equation}
It is not difficult to show that $\ch_\delta$ is continuous.

Let $\theta\in \Theta$, with $t<T$.
For
$u\colon \Theta\rightarrow \mathbb{R}$ upper semicontinuous,
locally bounded from above, 
the subjet 
of $u$ in $\theta$ is  defined by
\begin{equation*}
  \underline{\mathcal{J }}_L u(\vartheta)\coloneqq
  \left\{ 
    (\alpha,\beta,\g)\in \mathbb{R}\times \mathbb{R}^m\times \mathbb{S}^m\colon
    u(\theta)=
    \overline{\mathcal{E}}_{L}
     \left[ 
       (u^\theta-\varphi^{\alpha,\beta,\g})(\T
       \wedge 
       \ch_\delta
       ,\B)
     \right] \ \mbox{for some $\delta\in(0,T-t]$}
  \right\} ,
\end{equation*}
In a symmetric way, for a lower semicontinuous function
$u\colon \Theta\rightarrow \mathbb{R}$,
locally bounded from below,
the superjet
of $u$ in $\theta$ is  defined by
\begin{equation*}
  \overline{\mathcal{J }}_L u(\theta)\coloneqq
  \left\{ 
    (\alpha,\beta,\g)\in \mathbb{R}\times \mathbb{R}^m\times \mathbb{S}^m\colon
    u(\theta)=
    \underline{\mathcal{E}}_{L}
     \left[ 
       (u^\theta-\varphi^{\alpha,\beta,\g})(\T\wedge 
       \ch_\delta
       ,\B)
     \right]\ \mbox{for some $\delta\in(0,T-t]$}
  \right\} .
\end{equation*}


\noindent We denote $\mathbb{\overline R}\coloneqq \mathbb{R}\cup\{-\infty,+\infty\}$.

\begin{definition}[$\mathcal{P}_L$-viscosity sub-/supersolution]\label{def:viscoppde}
Let $G\colon \Theta\times \mathbb{R}\times \mathbb{R}^m\times \mathbb{S}^m\rightarrow \mathbb{\overline R}$ be a function.
  A
$d_\infty$-upper semicontinuous
(\emph{resp.}\ 
$d_\infty$-lower semicontinuous)
 function $u\colon \Theta
\rightarrow \mathbb{R}$,
locally
 bounded from above 
(\emph{resp.}\ locally bounded from below)
 is a
$\mathcal{P}_L$-viscosity subsolution 
(\emph{resp.}\
$\mathcal{P}_L$-viscosity supersolution) 
of 
\eqref{eq:PPDEgeneral} if
\begin{equation*}
  -\alpha-G(\theta,u(\theta),\beta,\gamma)\leq 0\qquad \forall \theta\in \Theta,\ (\alpha,\beta,\gamma)\in \underline{\mathcal{J}}_L u(\theta),
\end{equation*}
\begin{equation*}
\mbox{\emph{(resp.\ }}  -\alpha-G(\theta,u(\theta),\beta,\gamma)\geq 0\qquad \forall \theta\in \Theta,\ (\alpha,\beta,\gamma)\in \overline{\mathcal{J}}_L u(\theta)
\mbox{\emph{).}}
\end{equation*}
A
locally
 bounded continuous function $u$
 is a $\mathcal{P}_L$-viscosity solution
of 
\eqref{eq:PPDEgeneral} 
 if it is both a $\mathcal{P}_L$-viscosity sub- and supersolution of~\eqref{eq:PPDEgeneral}.
\end{definition}

\begin{remark}\label{rem:comparedef}
Let us recall the previous definition of viscosity solution of path-dependent PDEs. Define the probability family $\cP'_L$ on the space $\tilde\Th' \coloneqq \O\times\O\times\O\times C_0\big([0,T],\dbS^m \big)$ (we still use the notations of canonical processes $\B,\A,\M,\Q$):
\begin{multline*}
  \cP'_L\coloneqq \Big\{\dbP\colon 
 \B=\A+\M, \ \M~\mbox{is a $\dbP$-martingale}, \q\langle\M\rangle =\Q,\ \A\in AC([0,T],\dbR^m), \\
 \Q\in AC([0,T],\dbR^{m,m}),\ |\dot A|_\infty\le L, \ |\dot Q|_\infty \le L, \ \dbP\mbox{-a.s.} \Big\}
\end{multline*}
Note that the space $\tilde\Th'$ misses the time dimension, compared to the canonical space $\tilde\Th$ in the present paper. Define the  nonlinear expectation $\ol\cE'_L \coloneqq  \sup_{\dbP\in \cP'_L} \dbE^\dbP $ and the subjet
\begin{equation*}
  \underline{\mathcal{J }}'_L u(\theta)\coloneqq
  \left\{ 
    (\alpha,\beta,\theta)\in \mathbb{R}\times \mathbb{R}^m\times \mathbb{S}^m\colon
    u(\theta)=
    \max_{\t\in \cT}
    \overline{\mathcal{E}}'_L
     \left[ 
       (u^\theta-\varphi^{\alpha,\beta,\theta})(\tau
       \wedge 
       \ch_\delta
       ,\B)
     \right]\ \mbox{for some $\delta\in(0,T-t]$}
  \right\},
\end{equation*}
where $\cT$ is the set of all $\dbF^{\B}$-stopping times.
Similarly we can define the superjet $\ol\cJ'_L$. Then we define $\cP'_L$-viscosity solutions as in Definition \ref{def:viscoppde}, by replacing the jets $\ol\cJ_L, \ul\cJ_L$ by $\ol\cJ'_L, \ul\cJ'_L$.
As we see, in our new definition of subjet
 the function $(u-\f)^\th_{\ch_\d\we\cd}$ reaches its maximum at $0$
 in the sense of $\ol\cE_{L}$ instead of $\max_{\t\in\cT}\ol\cE'_L$. 
By doing so, the maximization over the stopping times $\t\in \cT$ is replaced by the maximization of the laws applied on the canonical variable $\T$.  
Indeed, the new nonlinear expectation $\ol\cE_L$ is a randomized optimal stopping operator. In general, one need fewer assumptions to ensure the existence of
\begin{equation}
  \label{differentoptimization}
\dbP^*\in \arg\max_{\dbP\in \cP_{L}} \dbE^\dbP[f] \q
\mbox{than that of}\q \t^* 
\in
 \arg\max_{\t\in\cT} \ol\cE'_L[f],\q\mbox{for}~f\colon\Th\rightarrow\dbR
\end{equation}
In particular, the optimal probability $\dbP^*$ for the first optimization exists if $f$ is $d_\infty$-u.s.c.\ and bounded from above (see Section \ref{sec:preliminary}), while in \cite{ETZ-os} the authors proved for the second optimization the optimal stopping time $\t^*$ exists if $f$ is bounded $d_\infty$-uniformly continuous. This change allows the viscosity solution under the new definition to have better properties. For example, it allows us to prove a stronger stability of solutions (see Section \ref{sec:stability}) and a comparison result for semicontinuous solutions (see Section \ref{sec:comparison}). 

%

Finally, the change of definition does not threat the already proven results in the path-dependant PDE literature, namely, comparison \cite{ETZ2, Ren, Ren2015}, convergence of numerical schemes \cite{RT}, etc. In fact, the arguments for these results will stay in the same lines, 
while necessary modifications need to be made concerning the optimization problems  in \eqref{differentoptimization}.
\end{remark}

\section{Stability}\label{sec:stability}

For all $\dbG$-stopping time $\t\le T$, define the family:
\begin{equation*}
\cP_L(\t,\vartheta) \coloneqq  \Big\{\dbP\in \cP_L\colon \T \le T-\t(\vartheta), \ \mathbb{P}\mbox{-a.s.}\Big\}.
\end{equation*}
We list the following important properties concerning the families of probabilities 
$\mathcal{P}_L$ and $\mathcal{P}_L(\tau,\vartheta)$.
 The proofs are postponed to Appendix.

\begin{proposition}\label{2017-07-18:26}
  The set $\mathcal{P}_L$ is compact.
\end{proposition}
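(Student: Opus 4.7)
The plan is to apply Prokhorov's theorem in the Polish space $\cP$: first establish tightness of $\cP_L$, and then show that the constraints defining $\cP_L$ are preserved under weak limits.

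\emph{Tightness.} I would argue marginal by marginal. The law of $\T$ lives in $[0,T]$, which is compact. Under any $\dbP\in \cP_L$, both $\A$ and $\Q$ are $\dbP$-a.s.\ $L$-Lipschitz and start at $0$, so their laws are concentrated on a fixed Arzel\`a--Ascoli compact of $\O$ and of $C_0([0,T],\dbS^m)$, respectively. For $\M$, I would exploit that $\M_0=0$, $\M$ is a continuous $\dbP$-martingale with $\langle\M\rangle=\Q$, and $|\dot\Q|_\infty\leq L$: It\^o isometry gives $\dbE^\dbP[|\M_t|^2]\leq mLT$, while BDG combined with $|\Q_t-\Q_s|\leq L(t-s)$ yields the uniform estimate $\dbE^\dbP[|\M_t-\M_s|^4]\leq C(t-s)^2$. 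By Kolmogorov--Chentsov the laws of $\M$ form a tight family in $\O$, and since $\B=\A+\M$ $\dbP$-a.s., the law of $\B$ is also tight. Prokhorov then delivers the relative compactness of $\cP_L$.

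\emph{Closedness.} Let $(\dbP^n)\subset \cP_L$ converge weakly to some $\dbP\in \cP$. The set $\tilde\Th_L$ in \eqref{2016-09-27:01} is closed in $\tilde\Th$ (the constraint $\o=a+\mu$ is continuous, and the $L$-Lipschitz conditions on $a$ and $q$ pass to uniform limits), so Portmanteau gives $\dbP(\tilde\Th_L)\geq \limsup_n \dbP^n(\tilde\Th_L)=1$. For the martingale property of $\M$, I would test it on cylinder functionals: for $0\leq s_1\leq\ldots\leq s_k\leq s<t\leq T$ and $\phi$ bounded continuous,
\[
\dbE^{\dbP^n}\bigl[(\M_t-\M_s)\,\phi(\X_{s_1},\ldots,\X_{s_k})\bigr]=0,
\]
and the uniform $L^2$-bound $\sup_n\dbE^{\dbP^n}[|\M_t|^2]\leq mLT$ provides the uniform integrability needed to pass to the weak limit, yielding the analogous identity under $\dbP$. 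Since such functionals generate $\cG_s$, this shows $\M$ is a $\dbP$-martingale. Replacing $(\M_t-\M_s)$ by the matrix increment $(\M_t\M_t^\top-\Q_t)-(\M_s\M_s^\top-\Q_s)$ and using the uniform $L^4$-bound on $\M$ to control $\M\M^\top$ in $L^2$, the analogous passage to the limit shows that $\M\M^\top-\Q$ is a $\dbP$-martingale, hence $\langle\M\rangle=\Q$ $\dbP$-a.s.

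The main obstacle will be precisely this passage to the weak limit in the martingale identities: the integrands $\M_t$ and $\M_t\M_t^\top-\Q_t$ are unbounded, so weak convergence by itself is not enough. The uniform $L^2$- and $L^4$-moment bounds inherited from the a priori control $|\dot\Q|_\infty\leq L$ are the essential ingredient that supplies uniform integrability and justifies the limiting step.
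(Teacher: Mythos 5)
Your proposal is correct and follows the same overall route as the paper: Prokhorov's theorem on the Polish space $\mathcal{P}$, i.e.\ tightness plus closedness, with closedness established by passing martingale test-function identities to the weak limit. The differences are technical. For tightness, the paper packages all the constraints into a single Arzel\`a--Ascoli compact $\tilde\Theta^{\gamma,\ell}_L$ whose $\mathbb{P}$-measure is bounded below uniformly over $\mathcal{P}_L$ via Kolmogorov--\v Centsov (Lemma~\ref{2016-09-26:06} and Remark~\ref{2016-09-27:00}), whereas you argue marginal by marginal and deduce tightness of $\B$ from $\B=\A+\M$; both are equivalent since on a finite product of Polish spaces tightness of the marginals is equivalent to tightness of the joint law. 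In the closedness step, the paper handles the unboundedness of the integrands $\M_t$ and $\M_t\M_t^\top-\Q_t$ by first localizing with the continuous stopping times $\tau_k$ to reduce to bounded functionals, and then sending $k\to\infty$ under an $L^1$ bound (estimate~\eqref{2016-09-28:07}), while you bypass the localization by using the uniform moment bounds $\sup_n\dbE^{\dbP^n}[|\M_t|^2]\leq mLT$ and the BDG $L^4$-estimate to get uniform integrability directly. Your route is slightly more streamlined; the paper's localization is a standard alternative and avoids invoking BDG. Both are sound, and your moment computations from $\langle\M\rangle=\Q$, $|\dot\Q|_\infty\leq L$ are correct.
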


\begin{lem}\label{lem:graphclose}
Let $\t\colon\tilde\Th\rightarrow [0, T]$ be a continuous $\dbG$-stopping time. Then the graph of the function 
\begin{equation*}
\tilde{\Theta}\rightarrow\mathcal{P},\  \vartheta\mapsto \cP_L(\t,\vartheta) 
\end{equation*}
is closed, i.e., given $\{(\vartheta^n,\dbP^n)\}_{n\in\dbN}, (\vartheta^*,\dbP^*)$ such that  $\dbP^n\in \cP_L(\t,\vartheta^n)$ for each $n$, $|\vartheta^n-\vartheta^*|_\infty\rightarrow 0$ and $\dbP^n\rightarrow\dbP^*$, we have $\dbP^*\in \cP_L(\t,\vartheta^*)$.
\end{lem}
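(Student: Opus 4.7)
The plan is to decompose the conclusion $\dbP^*\in \cP_L(\t,\vartheta^*)$ into the two conditions defining this set: (i) membership in $\cP_L$ itself, and (ii) the support constraint $\dbP^*(\T \leq T - \t(\vartheta^*)) = 1$. Condition (i) follows immediately from Proposition~\ref{2017-07-18:26}: the set $\cP_L$ is compact, hence closed in the weak topology of $\cP$, and since each $\dbP^n\in \cP_L$ converges weakly to $\dbP^*$, we get $\dbP^*\in \cP_L$.

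For (ii), the strategy is a Portmanteau-type argument exploiting the continuity of $\t$. For each $\eps > 0$, introduce the closed set
\begin{equation*}
F_\eps \coloneqq \{\vartheta\in \tilde{\Theta} : \T(\vartheta) \geq T - \t(\vartheta^*) + \eps\},
\end{equation*}
which is closed because the projection $\T$ is continuous. Since $\t$ is continuous and $|\vartheta^n - \vartheta^*|_\infty \to 0$, we have $\t(\vartheta^n) \to \t(\vartheta^*)$, so for all $n$ sufficiently large $T - \t(\vartheta^n) < T - \t(\vartheta^*) + \eps$. Hence $F_\eps \subseteq \{\T > T - \t(\vartheta^n)\}$, and the hypothesis $\dbP^n\in \cP_L(\t,\vartheta^n)$ yields $\dbP^n(F_\eps) = 0$ for all such $n$. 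The standard Portmanteau theorem then gives
\begin{equation*}
\dbP^*(F_\eps) \leq \limsup_{n\to\infty} \dbP^n(F_\eps) = 0.
\end{equation*}

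To finish, observe that $\{\T > T - \t(\vartheta^*)\} = \bigcup_{k\geq 1} F_{1/k}$ is an increasing union, so continuity of $\dbP^*$ from below yields $\dbP^*(\T > T - \t(\vartheta^*)) = 0$, proving (ii). The only point that requires any care is checking that the continuity of $\t$ is with respect to the product topology used in the hypothesis and that $F_\eps$ is genuinely closed; beyond that, the argument is bookkeeping built on compactness of $\cP_L$ and the Portmanteau theorem, so I do not anticipate a substantial analytical obstacle here.
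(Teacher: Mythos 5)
Your decomposition into (i) closedness of $\cP_L$ and (ii) the support constraint is correct, and (i) is handled exactly as in the paper via Proposition~\ref{2017-07-18:26}. For (ii) the paper routes through a Skorohod representation plus Fatou's lemma rather than Portmanteau; that is only a cosmetic difference, since both exploit the same two facts (continuity of $\t$, and that $\{\T\le T-\t(\vartheta^*)\}$ is closed / its complement open).

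However, as written, your Portmanteau step is applied in the wrong direction, and the argument does not go through. For a \emph{closed} set $F$, Portmanteau gives $\limsup_n\dbP^n(F)\le\dbP^*(F)$, not $\dbP^*(F)\le\limsup_n\dbP^n(F)$. Since your $F_\eps=\{\T\ge T-\t(\vartheta^*)+\eps\}$ is closed and $\dbP^n(F_\eps)=0$, all you legitimately obtain is the useless bound $0\le\dbP^*(F_\eps)$. The inequality you actually invoked, $\dbP^*(U)\le\liminf_n\dbP^n(U)$, is the one for \emph{open} sets. The fix is minor: replace $F_\eps$ by the open set $G_\eps\coloneqq\{\T> T-\t(\vartheta^*)+\eps\}$. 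The same continuity argument shows $G_\eps\subseteq\{\T> T-\t(\vartheta^n)\}$ for $n$ large, so $\dbP^n(G_\eps)=0$, and then Portmanteau yields $\dbP^*(G_\eps)\le\liminf_n\dbP^n(G_\eps)=0$. Since $\{\T>T-\t(\vartheta^*)\}=\bigcup_{k\ge1}G_{1/k}$ is an increasing union, continuity from below gives $\dbP^*(\T>T-\t(\vartheta^*))=0$. (Equivalently, one can apply the closed-set Portmanteau to the ``good'' closed sets $\{\T\le T-\t(\vartheta^*)+\eps\}$, on which $\dbP^n$ places full mass for $n$ large, and then let $\eps\downarrow0$ by continuity from above.)
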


Define the nonlinear (conditional) expectations:
\begin{equation*}
  \overline{\mathcal{E}}^\t_L [\cd] (\vartheta)\coloneqq \sup_{\mathbb{P}\in \mathcal{P}_L(\t,\vartheta)}\dbE^\dbP[\cd]
\qquad\qquad
  \underline{\mathcal{E}}^\t_L [\cd](\vartheta) \coloneqq \inf_{\mathbb{P}\in \mathcal{P}_L(\t, \vartheta)}\dbE^\dbP[\cd].
\end{equation*}

\begin{prop}\label{prop:dpp}
Let $f\colon\Th \rightarrow \dbR$ be $d_\infty$-u.s.c.\  bounded from above, and 
$\t$ be a continuous $\dbG$-stopping time. We have
\begin{equation}\label{2018-05-30:12}
\ol\cE_L\Big[ f 1_{\{\T<\t\}}+\ol\cE^\t_L \big[f^{\theta}\big]
_{|
\theta=(\t,\B)}1_{\{\t\le\T\}
}
 \Big] 
=
 \ol\cE_L\big[f\big].
\end{equation}
In particular we have
\begin{equation}\label{2018-05-30:13}
\ol\cE_L\Big[ \ol\cE^\T_L\big[ f^{\theta} \big]_{|\theta=(\T,\B)} \Big] 
=  \ol\cE_L \big[f \big].
\end{equation}
Moreover, let $\dbP^*\in \cP_L$  
be such that $\ol\cE_L[f] = \dbE^{\dbP^*}[f]$. Then we have
\begin{equation*}
  f  =  \ol\cE^\T_L\big[ f^{\T,\B} \big]\q \mbox{$\dbP^*$-a.s.}
\end{equation*}
\end{prop}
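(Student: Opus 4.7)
The plan is to establish \eqref{2018-05-30:12} by two opposite inequalities, deduce \eqref{2018-05-30:13} by specializing $\t=\T$ (which, being the first coordinate projection, is a continuous $\dbG$-stopping time), and derive the $\dbP^*$-a.s.\ identity via a squeeze. Write $\psi\coloneqq f\,1_{\{\T<\t\}}+\ol\cE^\t_L[f^\theta]_{|\theta=(\t,\B)}\,1_{\{\t\le\T\}}$ for the integrand on the left-hand side of \eqref{2018-05-30:12}. Berge's maximum theorem, applied to the compact-valued, closed-graph correspondence $\vartheta\mapsto\cP_L(\t,\vartheta)$ furnished by Proposition~\ref{2017-07-18:26} and Lemma~\ref{lem:graphclose}, together with the $d_\infty$-upper semicontinuity and boundedness above of $f$, implies that $\vartheta\mapsto\ol\cE^\t_L[f^\theta]_{|\theta=(\t(\vartheta),\B(\vartheta))}$ is upper semicontinuous; hence $\psi$ is upper semianalytic, and the outer expectations in \eqref{2018-05-30:12} are well defined. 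Note also that $\{\T<\t\}\in\cG_\t$, since $\T\we\t$ is $\cG_\t$-measurable.

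For the inequality $\ol\cE_L[f]\le\ol\cE_L[\psi]$, fix $\dbP\in\cP_L$ and let $\{\dbP_\vartheta\}$ be a regular conditional probability of $\dbP$ given $\cG_\t$, which exists because $\tilde\Th$ is Polish. For $\dbP$-a.e.\ $\vartheta$ on $\{\t\le\T\}$, the pushforward of $\dbP_\vartheta$ under the post-$\t(\vartheta)$ shift map lies in $\cP_L(\t,\vartheta)$: the Lipschitz bounds on $\dot\A$ and $\dot\Q$ transfer to the shifted absolutely continuous parts, and the $\dbP$-martingale property of $\M$ together with $\langle\M\rangle=\Q$ is preserved by conditioning via optional sampling. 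Hence $\dbE^{\dbP_\vartheta}[f]\le\ol\cE^\t_L[f^\theta]_{|\theta=(\t(\vartheta),\B(\vartheta))}$ on this event, while $\dbE^{\dbP_\vartheta}[f]=f(\vartheta)$ on its complement $\{\T<\t\}$. Integrating against $\dbP$ yields $\dbE^\dbP[f]\le\dbE^\dbP[\psi]\le\ol\cE_L[\psi]$, and the supremum over $\dbP$ delivers the inequality.

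For the converse $\ol\cE_L[\psi]\le\ol\cE_L[f]$, fix $\dbP\in\cP_L$ and $\eps>0$. A Jankov--von Neumann measurable selection---applicable because $\cP_L(\t,\cdot)$ is compact-valued with closed graph---produces a universally measurable map $\vartheta\mapsto\dbP^\vartheta\in\cP_L(\t,\vartheta)$ satisfying $\dbE^{\dbP^\vartheta}\bigl[f^\theta_{|\theta=(\t(\vartheta),\B(\vartheta))}\bigr]\ge\ol\cE^\t_L[f^\theta]_{|\theta=(\t,\B)}(\vartheta)-\eps$. Define the pasted measure $\tilde\dbP$ on $\tilde\Th$ whose $\cG_\t$-marginal equals that of $\dbP$ and whose conditional law given $\cG_\t$ is the pushforward of $\dbP^\vartheta$ under the concatenation $\vartheta'\mapsto\vartheta\otimes_{\t}\vartheta'$ on $\{\t\le\T\}$, and coincides with $\dbP_\vartheta$ on $\{\T<\t\}$. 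Verifying $\tilde\dbP\in\cP_L$ is the main obstacle: the Lipschitz bounds on $\A,\Q$ transfer piecewise, but the $\tilde\dbP$-martingale property of $\M$ together with $\langle\M\rangle=\Q$ across the random interface $\t$ requires the tower property combined with the fact that $\M$ under $\dbP^\vartheta$ starts from zero, so it glues coherently with the pre-$\t$ piece. Once done, $\dbE^{\tilde\dbP}[f]\ge\dbE^\dbP[\psi]-\eps$, and letting $\eps\downarrow 0$ and varying $\dbP$ gives the inequality.

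Finally, \eqref{2018-05-30:13} is immediate from \eqref{2018-05-30:12} on taking $\t=\T$, since then $\{\T<\T\}=\emptyset$. For the last assertion, the Dirac mass at the null element $(0,0,0,0,0)\in\tilde\Th$ lies in $\cP_L(\T,\vartheta)$ for every $\vartheta$, giving the pointwise bound $f\le\ol\cE^\T_L[f^{\T,\B}]$ (using that $f$ depends only on $\omega_{t\we\cd}$ by its $d_\infty$-upper semicontinuity); combined with $\dbE^{\dbP^*}\bigl[\ol\cE^\T_L[f^{\T,\B}]\bigr]\le\ol\cE_L\bigl[\ol\cE^\T_L[f^{\T,\B}]\bigr]=\ol\cE_L[f]=\dbE^{\dbP^*}[f]$---the middle equality being \eqref{2018-05-30:13} and the last being the optimality of $\dbP^*$---this forces the nonnegative integrand $\ol\cE^\T_L[f^{\T,\B}]-f$ to vanish $\dbP^*$-a.s.
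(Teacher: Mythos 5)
Your overall architecture---prove the two opposite inequalities in \eqref{2018-05-30:12}, specialize to $\t=\T$ to get \eqref{2018-05-30:13}, then squeeze for the $\dbP^*$-a.s.\ statement---is exactly the paper's, and the final squeeze (Dirac at $0$ in $\cP_L(\T,\vartheta)$ giving $f\le\ol\cE^\T_L[f^{\T,\B}]$, then optimality of $\dbP^*$) matches the paper verbatim. The inequality $\le$ is also essentially the paper's: it obtains, via Proposition~\ref{prop:mea-sele} (a Bertsekas--Shreve Proposition~7.33 selection, exact rather than your Jankov--von Neumann $\eps$-optimal one, but this is a cosmetic difference), a $\cG_{\T\wedge\t}$-measurable optimal kernel $\nu$ with values in $\cP_L(\t,\cdot)$, and then the concatenation $\dbP\otimes_\t\nu\in\cP_L$ of Lemma~\ref{lem:concatenation-exp} delivers $\dbE^\dbP[\psi]=\dbE^{\dbP\otimes_\t\nu}[f]\le\ol\cE_L[f]$. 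One detail the paper handles that you omit: the kernel must be $\cG_{\T\wedge\t}$-measurable (not merely universally or Borel measurable) for the pasting and the identity $\dbE^\dbP[f\,1_{\{\T<\t\}}]=\dbE^{\dbP\otimes_\t\nu}[f\,1_{\{\T<\t\}}]$ to go through, and the paper secures this by composing the Borel selector with the stopping map $\vartheta\mapsto\vartheta_{\T\wedge\t\wedge\cdot}$.

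The genuine divergence is in the $\ge$ direction. The paper dispatches it in one line, invoking only that each $\cP_L(\t,\vartheta)$ contains the Dirac at $0$. You instead prove it by disintegration: take a regular conditional probability $\{\dbP_\vartheta\}$ of $\dbP$ given $\cG_\t$, show that on $\{\t\le\T\}$ the post-$\t(\vartheta)$ shift of $\dbP_\vartheta$ lies in $\cP_L(\t,\vartheta)$, and conclude by the tower property. This is the conditioning counterpart of the pasting used for $\le$, and it is a coherent and arguably more transparent route; it does not appear anywhere in the paper, which never introduces r.c.p.d.'s. Its cost is that you must verify a conditional-martingale-preservation lemma---that the shifted conditional law of an element of $\cP_L$ again has a square-integrable martingale $\M$ with $\langle\M\rangle=\Q$. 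Your justification ``preserved by conditioning via optional sampling'' is too thin: optional sampling gives the stopped-martingale identity under $\dbP$, not the martingale property of the post-$\t$ increments under the conditional measure $\dbP_\vartheta$; what you actually need is a Stroock--Varadhan-type regular conditional probability theorem for martingale problems, which is a nontrivial lemma that would have to be stated and proved in this setting (in particular, dealing with null-set issues uniformly over countably many rational times and the random interface $\t$). The paper's machinery is deliberately set up so as never to need such a lemma. Finally, your Berge's-maximum-theorem paragraph establishing upper semicontinuity of the integrand is harmless but not necessary; the paper gets the required measurability of $\vartheta\mapsto\ol\cE^\t_L[f^\theta]_{|\theta=(\t,\B)}$ as a byproduct of Proposition~\ref{prop:mea-sele}.
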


As explained in Remark \ref{rem:comparedef}, in this section we will exploit the advantage of the new definition of the jets in order to prove a better stability result. 
As we will see in the rest of the paper, one may apply many pseudo-metrics on $\Th$ other than $d_\infty$. 


In what follows, we denote by $d$ a pseudo-metric on 
$\Theta $ 
which is $d_\infty$-continuous
and such that
 $d(\theta,\theta')=0$
implies
 $t=t'$
and
 $\omega_{t\wedge \cdot}=\omega'_{t\wedge \cdot}$.
In order to recall 
that $d$ has these properties, we will often write $d\ll d_\infty$.





\begin{prop}\label{prop:stable}
Let $d$ be a pseudo-metric defined on $\Th$ such that $ d \ll d_\infty$.
 Let $u_n$ be a sequence of $d_\infty$-u.s.c.\ functions uniformly bounded from above. 
Define the function $u\colon\Th\rightarrow \dbR$ by
\begin{equation*}
  u(\th) \coloneqq 
\limsup_{
\substack{
d(\hat{\theta},\th)\rightarrow 0\\
n\rightarrow \infty}} u_n(\hat{\th}).
\end{equation*}
Then, for any $(\a,\b,\g) \in \ul\cJ_L u(\th)$, there exist $\hat\th_n \in \Th$
and $(\a_n,\b_n, \g)\in \ul\cJ_L u_n(\hat\th_n)$ such that
\begin{equation*}
d(\hat\th_n ,\th )\rightarrow 0,\q \big( u_n(\hat\th_n),\a_n,\b_n\big) 
\rightarrow \big(u(\th), \a,\b\big).  
\end{equation*}
\end{prop}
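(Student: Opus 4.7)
The plan is to adapt the classical stability argument of Crandall-Ishii-Lions to the path-dependent setting with the nonlinear expectation $\ol\cE_L$, combining it with a measurable selection. From the definition of the subjet, fix $\delta_0\in(0,T-t]$ such that
\[
u(\theta) = \ol\cE_L\big[(u^\theta-\varphi^{\alpha,\beta,\gamma})(\T\wedge\ch_{\delta_0},\B)\big].
\]
A preliminary observation is the monotonicity of the subjet in $\alpha$: for any $\eta\ge 0$, $(\alpha+\eta,\beta,\gamma)\in\ul\cJ_L u(\theta)$ as well. Indeed, going from $\alpha$ to $\alpha+\eta$ subtracts the nonnegative quantity $\eta(\T\wedge\ch_{\delta_0})$ inside the expectation, so the supremum over $\cP_L$ can only decrease, while staying $\geq u(\theta)$ since the Dirac at the origin of $\tilde\Theta$ lies in $\cP_L$ and yields the value $u(\theta)$. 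Moreover, any $\cP_L$-optimal probability for the $\eta$-perturbed problem must satisfy $\T\wedge\ch_{\delta_0}=0$ almost surely, which plays the role of the ``strict local maximum'' used in the classical proof.

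Next, for each $n$ and small $\eta>0$, I would introduce the functional on $\Theta\times\dbR\times\dbR^m$
\[
F_n(\theta',a,b) \coloneqq \ol\cE_L\big[(u_n^{\theta'}-\varphi^{a,b,\gamma})(\T\wedge\ch_\delta,\B)\big]-u_n(\theta'),
\]
which is nonnegative and vanishes exactly when $(a,b,\gamma)\in\ul\cJ_L u_n(\theta')$. Compactness of $\cP_L$ (Proposition~\ref{2017-07-18:26}) and the $d_\infty$-upper semicontinuity of $u_n$ imply that $F_n(\cdot,\alpha+\eta,\beta)$ is $d_\infty$-upper semicontinuous. The relaxed limsup definition of $u$, combined with the jet equality for $u$ at $\theta$, forces the infimum of $F_n(\cdot,\alpha+\eta,\beta)$ over a $d$-neighborhood of $\theta$ to tend to $0$ as $n\to\infty$. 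I would then pick a near-minimizer $\tilde\theta_n$ with $d(\tilde\theta_n,\theta)\to 0$, and adjust $(a,b)$ from $(\alpha+\eta,\beta)$ to some $(\alpha_n,\beta_n)$ so that the exact equality $F_n(\hat\theta_n,\alpha_n,\beta_n)=0$ holds at a slightly perturbed point $\hat\theta_n$. The continuity and monotonicity of $F_n$ in $(a,b)$ support this adjustment, while a Jankov-von Neumann (or Kuratowski-Ryll-Nardzewski) measurable selection ensures the construction is coherent as $n$ varies.

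Finally, a diagonal extraction as $\eta\downarrow 0$, together with the finite-dimensional compactness for $(\alpha_n,\beta_n)$ and the implication $d\ll d_\infty$, yields convergent subsequences with $d(\hat\theta_n,\theta)\to 0$, $(\alpha_n,\beta_n)\to(\alpha,\beta)$, and $u_n(\hat\theta_n)\to u(\theta)$, the last limit following from the relaxed limsup equality combined with the upper semicontinuity bound furnished by $F_n\geq 0$.

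The main obstacle I anticipate is the exact adjustment of $(\alpha_n,\beta_n)$ enforcing $F_n(\hat\theta_n,\alpha_n,\beta_n)=0$ while keeping $\gamma$ rigidly fixed. In the classical Crandall-Ishii-Lions proof one freely perturbs all three parameters (via quartic perturbations of the test function), which makes the analysis nearly automatic; here the fixed Hessian $\gamma$ leaves only the $(1+m)$-dimensional freedom in $(\alpha,\beta)$, so the construction must be coupled with a measurable selection on the compact family $\cP_L$ to guarantee uniformity in $n$ and to drive the required convergences under the weaker pseudo-metric $d$.
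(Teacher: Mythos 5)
There is a genuine gap in your proposal, precisely at the step you flag as the ``main obstacle.'' Your plan is to show that $\inf F_n\to 0$ on $d$-neighborhoods of $\theta$ and then ``adjust $(a,b)$ to some $(\alpha_n,\beta_n)$ so that the exact equality $F_n(\hat\theta_n,\alpha_n,\beta_n)=0$ holds at a slightly perturbed point $\hat\theta_n$.'' But $F_n\geq 0$ with a small infimum does not give you a zero: for a fixed $\theta'$, the map $(a,b)\mapsto F_n(\theta',a,b)$ need not vanish for any $(a,b)$ with the prescribed $\gamma$ — indeed the subjet $\ul\cJ_L u_n(\theta')$ can be empty at a given $\theta'$, and $\Theta$ is infinite-dimensional so you cannot invoke compactness of a $d$-ball to extract a genuine minimizer of $F_n(\cdot,a,b)$. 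The measurable-selection theorem you cite organizes a family of choices; it does not create the choices. What is missing is a mechanism that actually produces points where $u_n$ has a subjet with Hessian component exactly $\gamma$.

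The paper supplies that mechanism probabilistically, and this is the core of its proof. After fixing $\delta,\epsilon$ and a maximizing sequence $\theta_n$, one takes $\dbP_n\in\cP_L$ attaining $\ol\cE_L[U_n]$ (possible by compactness of $\cP_L$) and applies the randomized dynamic programming principle, Proposition~\ref{prop:dpp}: the optimality of $\dbP_n$ forces $U_n=\ol\cE_L^{\T}\bigl[U_n^{\T,\B}\bigr]$ $\dbP_n$-a.s. This says that $\dbP_n$-almost every point $(\T,\B)$ of the canonical space carries a subjet of $u_n^{\theta_n}$ with the same $\gamma$ and with the $(\alpha,\beta)$-components shifted in the obvious way by the test function. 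The existence of the exact subjet points you need is thus a consequence of $\dbP_n$ localizing on them, not of any deterministic adjustment. Skorohod representation then turns the weak convergence $\dbP_n\to\dbP^*$ into almost-sure convergence of the random points, and Fatou's lemma together with the relaxed-limsup definition of $u$ forces $\dbP^*[\T=0]=1$, which is exactly what makes the randomly selected points drift $d$-back to $\theta$ and the evaluated $u_n$-values converge to $u(\theta)$. Your preliminary observation that $(\alpha+\eta,\beta,\gamma)\in\ul\cJ_L u(\theta)$ and that the optimal measure then satisfies $\T\wedge\ch_{\delta_0}=0$ a.s. is correct and is indeed used in the paper (with $\eta=\epsilon$), but it cannot substitute for the dynamic programming step; without Proposition~\ref{prop:dpp} you only know the infimum of $F_n$ is small, not that any $\hat\theta_n$ with $(\alpha_n,\beta_n,\gamma)\in\ul\cJ_L u_n(\hat\theta_n)$ exists.
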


\begin{proof}
\no\underline{\emph{Step 1.}}
 Since
 $(\alpha,\beta,\gamma)\in \underline{\mathcal{J}}_L u(\theta)$, we also have $(\alpha+\epsilon,\beta,\gamma)\in \underline{\mathcal{J}}_L u(\theta)$ for any $\epsilon>0$. Let $\th_n$ be a sequence such that
 \begin{equation*}
 d(\th_n,\th)\rightarrow 0\q\mbox{and}\q u(\th) = \lim_{n\rightarrow \infty} u_n(\th_n).
 \end{equation*}
  For the simplicity of notation, we denote
\begin{equation*}
U (\th')\coloneqq  \big(u^{\theta}-\varphi^{\alpha+\epsilon,\beta,\gamma}\big) \big(t'\we \ch_\d(\th'), \o' \big)
,\q U_n (\th')\coloneqq  \big(u_n^{\theta_n}-\varphi^{\alpha+\epsilon,\beta,\gamma}\big) \big(t'\we \ch_\d(\th'), \o' \big),\q\mbox{for all}~\th'\in \Th.
\end{equation*}
Since  $\mathcal{P}_L$ is compact (Proposition~\ref{2017-07-18:26})
 and the maps 
$\mathcal{P}\rightarrow \mathbb{\overline R}, \mathbb{P} \mapsto 
  \mathbb{E}^{\mathbb{P}}
   \left[ U_n \right] $
are upper semicontinuous, there exists $\mathbb{P}_n\in \mathcal{P}_L$ for each $n\in \dbN$ such that 
\begin{equation}
  \label{eq:2017-07-18:25}
  \mathbb{E}^{\mathbb{P}_n}
   \big[ U_n \big] 
    = 
\overline{\mathcal{E}}_L
   \big[ U_n \big].
\end{equation}
By considering a subsequence if necessary, again denoted by $\mathbb{P}_n$, we can find
 $\mathbb{P}^*\in \mathcal{P}_L$ such that $\mathbb{P}_n\rightarrow \mathbb{P}^*$.
 By Skorohod's representation there exists a probability space $(\widehat \O, \widehat \cG, \widehat \dbP)$ on which
\begin{equation}\label{skorohodrep}
\exists\q \mbox{r.v.'s}\q \X^n \Big|_{\widehat \dbP} \stackrel{d}{=} \X \Big|_{\dbP_n},\q
\X^* \Big|_{\widehat \dbP} \stackrel{d}{=}\X \Big|_{\dbP^*}\q \mbox{such that}\q \big|\X^n- \X^*\big|_\infty\rightarrow 0\q \widehat \dbP\mbox{-a.s.,}
\end{equation}
where $ \stackrel{d}{=} $ denotes equality in distribution.
Together with \eqref{eq:2017-07-18:25} we obtain
\begin{equation}\label{mainestimate-stable}
\begin{split}
      u(\theta)&= \lim_{n\rightarrow \infty}u_n(\theta_n)
   \leq 
  \limsup_{n\rightarrow \infty} \overline{\mathcal{E}}_L  \big[ U_n  \big]
 = 
\limsup_{n\rightarrow \infty}      
\mathbb{E}^{\mathbb{P}_n}
   \big[ U_n \big] 
 = 
\limsup_{n\rightarrow \infty}      
\mathbb{E}^{\widehat \dbP}
   \left[ U_n(\X^n) \right]    \\
&\le      
\mathbb{E}^{\widehat \dbP}
   \left[\limsup_{n\rightarrow \infty}  U_n(\X^n) \right]
   \le  
\mathbb{E}^{\widehat\dbP}
   \left[ U(\X^*) \right] 
    = 
\mathbb{E}^{\mathbb{P}^*}
   \big[ U \big] 
\leq
 \overline{\mathcal{E}}_L
   \big[ U \big]
    =  u(\theta),
  \end{split}
\end{equation}
The first inequality in the last  line is due to Fatou's lemma,
 the second one is due to the definition of $u$,
whereas the last 
equality  is due to the assumption $(\alpha,\beta,\gamma)\in \mathcal{\underline J}_Lu(\theta)$  and by choosing any $\delta$ sufficiently small.
Therefore
\begin{equation*}
\mathbb{E}^{\mathbb{P}^*}
   \big[ U \big] 
 = 
\overline{\mathcal{E}}_L
   \big[ U \big],
\end{equation*}
which, together with $(\alpha,\beta,\gamma)\in \overline{\mathcal{J}}_L u(\theta)$, provides
\begin{equation}\label{2017-07-18:27}
  \mathbb{P}^*[\T=0]=1.
\end{equation}
Since $\ch_\d$ is continuous,
from
\eqref{2017-07-18:27}
and by denoting $\T^*=\T(\X^*),\ \T^n=\T(\X^n)$,
 we have
\begin{equation}\label{alwasyinball}
   1 =
\mathbb{\widehat
P}\left[
\T^*=0\right]=
 \widehat\dbP \big[\T^* <\ch_\d(\X^*) \big] = \widehat\dbP \big[ \lim_{n\rightarrow\infty} (\T^n -\ch_\d(\X^n))<0 \big]
 \le  \widehat\dbP \big[ \liminf_{n\rightarrow\infty} \big\{\T^n <\ch_\d(\X^n)\big\}\big].
\end{equation}
Further, 
denoting $\B^*=\B(\X^*)$
and $\B^n=\B(\X^n)$,
by \eqref{skorohodrep},\eqref{2017-07-18:27}
and by 
$d_\infty$-continuity of $d$,
 we have
\begin{equation}
  \label{alwasyinball2}
  \begin{split}
    1 &= \widehat\dbP\Big[ d\big(\th, (t+\T^*, \o\otimes_t \B^*)\big) <\d \Big]  =  \widehat\dbP\Big[ \lim_{n\rightarrow\infty} d\big(\th_n, (t_n+\T^n, \o_n \otimes_{t_n} \B^n)\big) <\d \Big] \\
&\le  \widehat\dbP\Big[ \liminf_{n\rightarrow\infty} \Big\{d\big(\th_n, (t_n+\T^n, \o_n \otimes_{t_n} \B^n)\big) <\d \Big\} \Big].
\end{split}
\end{equation}
\no\underline{\emph{Step 2.}}
It follows from \eqref{eq:2017-07-18:25} and Proposition \ref{prop:dpp} that
\begin{equation*}
  U_n  =  \ol\cE_L^{\T}\big[ U_n^{\T,\B} \big],\q\mbox{$\dbP_n$-a.s.}
\end{equation*}
Therefore,  for all $n\in \dbN$,
\begin{equation}
  \label{alwaysoptimal}
\dbP_n\big[
 \Xi^n\big] =1, \q\mbox{where}\q
\Xi^n \coloneqq  \left\{ \vartheta' \in \widetilde\Theta\colon  U_n(\th')  = \ol\cE_L^{t'}
   \big[ U_n^{\th'}\big] (\vartheta') \right\}.
\end{equation}
Note that
\begin{equation*}
\Xi^n\cap \{\T<\ch_\d\} 
\subset
\Big\{
\vartheta'\in \widetilde\Theta
\colon
 \big(\a+\e, \b + 2\g \o'_{t'} ,\g \big)\in \ul\cJ_L u_n^{\th_n} (\th') \Big\},
\end{equation*}
and by \eqref{alwasyinball}, \eqref{alwasyinball2} and \eqref{alwaysoptimal} we have

\be\label{jetapproximate}
\widehat\dbP\Big[ \liminf_{n\rightarrow \infty}  \Big\{ \big(\a+\e, \b + 2\g \B^n_{\T^n} ,\g \big)\in \ul\cJ_L u_n^{\th_n}  (\T^n, \B^n),~ d\big(\th_n, (t_n+\T^n, \o_n \otimes_{t_n} \B^n)\big) <\d \Big\}  \Big]  =  1 .
\ee

\ms

\no\underline{\emph{Step 3.}}
It follows from \eqref{2017-07-18:27} that
\begin{equation*}
\widehat \dbP \big[ \lim_{n\rightarrow\infty }\T^n =0\big] =1.
\end{equation*}
Together with \eqref{mainestimate-stable} we obtain
\begin{equation*}
u(\th)
=
\dbE^{\widehat \dbP} \Big[ \limsup_{n\rightarrow\infty} U_n(\X^n) \Big] 
=
\dbE^{\widehat \dbP} \Big[ \limsup_{n\rightarrow\infty} u_n^{\th_n}(\T^n, \B^n) \Big].
\end{equation*}
By the definition of $u$ we know that $\limsup_{n\rightarrow\infty} u_n^{\th_n}(\T^n, \B^n) \le u(\th)$ $\widehat\dbP$-a.s. Therefore,
\begin{equation*}
\widehat\dbP\Big[ \limsup_{n\rightarrow\infty} u_n^{\th_n}(\T^n, \B^n) =u(\th)\Big] =1.
\end{equation*}
Together with \eqref{jetapproximate}, we obtain
\begin{equation}
\begin{multlined}[c][0.92\displaywidth] \label{setcontainall}
\widehat\dbP\Big[ \Big\{\limsup_{n\rightarrow\infty} u_n^{\th_n}(\T^n, \B^n) =u(\th)\Big\}  \\
\cap \liminf_{n\rightarrow\infty}  \Big\{ \big(\a+\e, \b + 2\g \B^n_{\T^n} ,\g \big)\in \ul\cJ_L u_n^{\th_n}  (\T^n, \B^n),~ d\big(\th_n, (t_n+\T^n, \o_n \otimes_{t_n} \B^n)\big) <\d \Big\}  \Big] =1.
\end{multlined}
\end{equation}
Denote $\hat\th_n\coloneqq  \big(t_n+\T^n(\vartheta'), \o_n\otimes_{t_n}\B^n(\vartheta') \big)$ and $\b_n\coloneqq \beta + 2\g \B^n_{\T^n}(\vartheta')$ for all $\vartheta'\in \tilde\Th$, $n\in \mathbb{N}$. It follows from \eqref{setcontainall} that
there exist 
 $\vartheta'$
and
$n$  such that
\begin{equation*}
d(\hat\th_{n} ,\th )< d(\th_n,\th) + \d,
\q \big| u_{n}(\hat\th_{n})-u(\th)\big|<
\delta,
\q |\b_{n}-\b|< 2\g \d, \q\mbox{and}\q
(\a+\e, \b_{n},\g)\in \ul\cJ_L u_{n}(\hat\th_{n}).  
\end{equation*}
Finally, since
the constants $\delta,\epsilon$ can be chosen arbitrarily small,
 the desired result follows.
\end{proof}

As direct consequence
of Proposition~\ref{prop:stable} 
we have the following

\begin{theorem}
\label{2018-05-17:00}
Let $d,u_n,u$ be as in
Proposition~\ref{prop:stable}.
Let $G_n\colon \Theta\times \mathbb{R}\times \mathbb{R}^m\times \mathbb{S}^m\rightarrow  \mathbb{\overline R}$ be a sequence of functions. 
Define $G\colon\Theta\times \mathbb{R}\times \mathbb{R}^m\times \mathbb{S}^m\rightarrow  \mathbb{\overline R}$ by
\begin{equation*}
  G(\theta,r,\beta,\gamma)\coloneqq \limsup_{\substack{
n\rightarrow \infty,\ d(\theta_n,\theta)\rightarrow 0\\
r_n\rightarrow r,\
\beta_n\rightarrow \beta
    }}
G_n(\theta_n,r_n,\beta_n,\gamma).
\end{equation*}
If  $u_n$ is a 
$\mathcal{P}_L$-viscosity subsolution 
of
\begin{equation*}
   -\partial_t u_n-
   G_n(\theta,u_n, \partial _\omega u_n, \partial ^2_{\omega\omega}u_n)=0,
\end{equation*}
then 
$u$ is a
$\mathcal{P}_L$-viscosity subsolution 
of
\begin{equation*}
   -\partial_t u-
   G(\theta,u, \partial _\omega u, \partial ^2_{\omega\omega}u)=0.
\end{equation*}
\end{theorem}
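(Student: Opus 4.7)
The plan is to derive the theorem as a one-line corollary of Proposition~\ref{prop:stable}. First, however, I would verify that the candidate limit $u$ satisfies the regularity conditions required by Definition~\ref{def:viscoppde}: $d_\infty$-upper semicontinuity and local boundedness from above. The latter is immediate from the assumed uniform upper bound on $\{u_n\}$. For the former, I would use the hypothesis $d\ll d_\infty$: if $\theta_k\to\theta$ in $d_\infty$, then also $d(\theta_k,\theta)\to 0$; for each $k$ the definition of $u(\theta_k)$ yields approximating indices $\hat\theta_k^j\in\Theta$ and $n_k^j\in\mathbb{N}$ with $d(\hat\theta_k^j,\theta_k)\to 0$, $n_k^j\to\infty$, and $u_{n_k^j}(\hat\theta_k^j)\to u(\theta_k)$; a standard diagonal extraction, using the triangle inequality for the pseudo-metric $d$, produces $\hat\theta_k^{j_k}$ with $d(\hat\theta_k^{j_k},\theta)\to 0$ and $n_k^{j_k}\to\infty$, so by the very definition of $u$ one gets $\limsup_k u(\theta_k)\le u(\theta)$.

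Next, I would verify the subsolution inequality. Fix $\theta\in\Theta$ and $(\alpha,\beta,\gamma)\in\underline{\mathcal{J}}_L u(\theta)$. Proposition~\ref{prop:stable} supplies $\hat\theta_n\in\Theta$ together with $(\alpha_n,\beta_n,\gamma)\in\underline{\mathcal{J}}_L u_n(\hat\theta_n)$ such that $d(\hat\theta_n,\theta)\to 0$ and $(u_n(\hat\theta_n),\alpha_n,\beta_n)\to(u(\theta),\alpha,\beta)$. Since each $u_n$ is a $\mathcal{P}_L$-viscosity subsolution of the equation with nonlinearity $G_n$, the defining jet inequality reads
\begin{equation*}
  \alpha_n\le G_n\bigl(\hat\theta_n,u_n(\hat\theta_n),\beta_n,\gamma\bigr).
\end{equation*}
Passing to $\limsup$ as $n\to\infty$ and invoking the definition of $G$ as the relaxed upper limit of $G_n$ along the converging sequences $(\hat\theta_n,u_n(\hat\theta_n),\beta_n)\to(\theta,u(\theta),\beta)$ (with $\gamma$ fixed, exactly as in the statement of the theorem), one obtains
\begin{equation*}
  \alpha\le \limsup_{n\to\infty} G_n\bigl(\hat\theta_n,u_n(\hat\theta_n),\beta_n,\gamma\bigr)\le G\bigl(\theta,u(\theta),\beta,\gamma\bigr),
\end{equation*}
which is precisely the required subsolution inequality $-\alpha-G(\theta,u(\theta),\beta,\gamma)\le 0$.

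I do not anticipate any genuine obstacle here: all the analytic content, namely the compactness of $\mathcal{P}_L$, the Skorohod representation, the Fatou-type passage through $\overline{\mathcal{E}}_L$, and the use of the dynamic programming principle (Proposition~\ref{prop:dpp}) to propagate the optimizer across the stopping time $\T$, is already absorbed into Proposition~\ref{prop:stable}. The only delicate point worth flagging is the bookkeeping of which variables participate in the $\limsup$ defining $G$: since Proposition~\ref{prop:stable} produces approximate jets with the same $\gamma$ as the original jet of $u$, and since $G$ is defined with $\gamma$ held fixed, the two definitions dovetail exactly and the closing limit passage goes through without additional continuity hypotheses on $G_n$ in the $\gamma$ variable.
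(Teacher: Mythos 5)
Your argument is correct and is exactly the approach the paper intends: Theorem~\ref{2018-05-17:00} is stated as a direct consequence of Proposition~\ref{prop:stable}, and your two steps (verifying $d_\infty$-upper semicontinuity of the half-relaxed limit $u$ via $d\ll d_\infty$, then transporting the jet inequality along the approximating jets from Proposition~\ref{prop:stable} and closing with the definition of $G$ as a relaxed $\limsup$) are precisely the bookkeeping the paper leaves implicit. Your remark about $\gamma$ being held fixed in both the proposition's conclusion and the definition of $G$ correctly identifies the only point where one must check that the two definitions mesh.
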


\section{Existence of $\cP_L$-viscosity solution:
Perron's method}\label{sec:perron}

As we see in the classical literature on the viscosity solutions, one can apply the stability and the comparison results for  semi-continuous solutions to prove the existence of viscosity solution via the so-called Perron's method. In Section \ref{sec:stability}, we have established a quite general stability result
(Theorem~\ref{2018-05-17:00}), and we will leave the discussion on the comparison result to the next sections. In this section, we adapt the Perron's method to the context of $\cP_L$-viscosity solution, assuming some comparison result holds true.

\begin{assumptions}\label{2017-02-21:05}{$\ $}
  Let $d$ be a pseudo-metric on $\Th$ such that $d\ll d_\infty$. 
  \begin{enumerate}[(i)]
  \item \label{2018-03-16:01}
$G\colon 
\Theta\times \mathbb{R}\times \mathbb{R}^m\times \mathbb{S}^m\rightarrow \mathbb{R}$
is 
$d+|\cdot|+|\cdot|_\infty+|\cdot|_\infty$-uniformly continuous
on
$d+|\cdot|+|\cdot|_\infty+|\cdot|_\infty$-bounded sets.
  \item\label{2018-05-26:02} For every $(\theta,\beta,\gamma)\in
\Theta\times \mathbb{R}^m\times \mathbb{S}^m$, the function $r \mapsto  G(\theta,r,\beta,\gamma)$ is non-decreasing.
  \item\label{2018-03-16:02}
 For every $(\theta,r,\beta)\in
\Theta\times \mathbb{R}\times \mathbb{R}^m$,
    \begin{equation*}
      G(\theta,r,\beta,\gamma)\leq    
   G(\theta,r,\beta,\gamma')\qquad \forall \gamma,\gamma'\in \mathbb{S}^m,\ \gamma\leq \gamma'.
    \end{equation*}
  \item\label{2018-03-20:05}
 For all $(\theta,r)\in \Theta\times \mathbb{R}$, $\beta,\beta'\in \mathbb{R}^m$, $\gamma,\gamma'\in \mathbb{S}^m$,
    \begin{equation*}      
 |G(\th,r, \beta+\beta', \gamma+\gamma')-G(\th,r,\beta,\gamma)| \le L \big(|\beta'| + |\gamma'|\big).
\end{equation*}
  \end{enumerate}
\end{assumptions}

\begin{assum}\label{2018-03-16:00}
Let $d$ be a pseudo-metric on $\Th$ such that $d\ll d_\infty$. Assume that there is a bounded $d$-u.s.c.\  viscosity subsolution $\ul u$ and a bounded $d$-l.s.c.\  viscosity  supersolution $\ol v$ of PPDE~\eqref{eq:PPDEgeneral} such that
\begin{enumerate}[i)]
\item  $\ul u \le \ol v$ on $\Th$; 
if $\ul u_*$
denote
 the $d$-l.s.c.\
 envelop of $\ul u$,
 then   $\ul u_{*}(T,\cdot) = \ol v(T,\cdot) \eqqcolon\xi$.
\item For any $d$-u.s.c.\  viscosity subsolution $u$ and $d$-l.s.c.\  viscosity  supersolution $v$ of PPDE~\eqref{eq:PPDEgeneral} taking values between $\ul u_*$ and $\ol v$, we have $u \le v$ on $\Th$.
\end{enumerate}.
\end{assum}

\begin{thm}\label{thm: Perron}
Let Assumptions~\ref{2017-02-21:05} and \ref{2018-03-16:00} hold true. Denote
\begin{equation*}
\cD \coloneqq  
\big\{\phi\colon\Th\rightarrow \dbR \colon \phi~\mbox{is a bounded $d$-u.s.c.\ $\cP_L$-viscosity subsolution of  \eqref{eq:PPDEgeneral} and}~\ul u\le \phi\le \ol v \big\}.
\end{equation*}
Then $u(\th)\coloneqq \sup\big\{\phi(\th)\colon\phi\in\cD\big\}$ 
is a $d$-continuous $\cP_L$-viscosity solution of  \eqref{eq:PPDEgeneral}, and satisfies the boundary condition $u(T,\cdot)=\xi$.
\end{thm}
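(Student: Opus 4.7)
The plan is the standard Perron method, tailored to the $\cP_L$-viscosity framework. Denote by $u^*$ and $u_*$ the $d$-u.s.c.\ and $d$-l.s.c.\ envelopes of $u$. I would prove (a) $u^*$ is a $\cP_L$-viscosity subsolution of \eqref{eq:PPDEgeneral} with $u^* \leq \ol v$, and (b) $u_*$ is a $\cP_L$-viscosity supersolution of \eqref{eq:PPDEgeneral} with $u_*\geq \ul u_*$. Once both hold, Assumption~\ref{2018-03-16:00}(ii) forces $u^*\leq u_*$, and since $u_*\leq u\leq u^*$ trivially, the three envelopes coincide, so $u$ is $d$-continuous and a $\cP_L$-viscosity solution. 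The boundary condition then follows from the squeeze $\xi=\ul u_*(T,\cdot)\leq u_*(T,\cdot)\leq u^*(T,\cdot)\leq \ol v(T,\cdot)=\xi$.

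For step~(a), I would first check that $\cD$ is closed under finite maxima: at a point $\theta$ with $\phi(\theta)\geq\psi(\theta)$ the inclusion $\ul\cJ_L(\phi\vee\psi)(\theta)\subset \ul\cJ_L\phi(\theta)$ holds (both expectations pinch $\phi(\theta)$ between them by the Dirac probability with $\T\equiv 0$), so the subsolution inequality for $\phi$ transfers to $\phi\vee\psi$. Using the separability of $(\Theta,d)$, I would extract an increasing sequence $(\phi_n)\subset\cD$ with
\[
u^*(\theta)=\limsup_{\substack{n\to\infty\\ d(\hat\theta,\theta)\to 0}}\phi_n(\hat\theta)\qquad\forall\theta\in\Theta.
\]
Applying the stability Theorem~\ref{2018-05-17:00} with constant driver $G_n\equiv G$ (which, by the $d$-uniform continuity of Assumption~\ref{2017-02-21:05}(i), coincides with its own upper relaxation) immediately yields that $u^*$ is a $\cP_L$-viscosity subsolution. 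The bound $u^*\leq\ol v$ is slightly delicate because $\ol v$ is only $d$-l.s.c.; one circumvents this by systematically replacing any candidate $\phi\in\cD$ by $\phi\wedge\ol v$ during the gluing, using that the terminal values match.

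Step~(b) relies on the classical bump-and-glue construction. Assume for contradiction that $u_*$ fails the supersolution property at some $\theta_0=(t_0,\omega_0)$ with $t_0<T$: there exists $(\alpha,\beta,\gamma)\in\ol\cJ_L u_*(\theta_0)$ satisfying $\eta\coloneqq -\alpha-G(\theta_0,u_*(\theta_0),\beta,\gamma)<0$. Exploiting the continuity of $G$ (Assumption~\ref{2017-02-21:05}(i)), I construct for small $\varepsilon,r>0$ a perturbed test function $\psi$ obtained from $\varphi^{\alpha,\beta,\gamma}$ shifted to $\theta_0$, augmented by the constant $u_*(\theta_0)+\varepsilon$ and penalized by a small quadratic in $d_\infty$, which is a strict classical $\cP_L$-subsolution on the $d$-ball $B_r(\theta_0)$, while $\psi<u_*\leq u$ on $\partial B_r(\theta_0)$. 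The glued candidate $w\coloneqq \max(u,\psi)$ on $B_r(\theta_0)$ and $w\coloneqq u$ elsewhere is $d$-u.s.c., bounded between $\ul u$ and $\ol v$ (after shrinking $\varepsilon$), and a $\cP_L$-subsolution. Picking $\theta_n\to\theta_0$ in $d$ with $u(\theta_n)\to u_*(\theta_0)$ (which exists by the very definition of $u_*$), one finds $\psi(\theta_n)\to u_*(\theta_0)+\varepsilon>u(\theta_n)$ for $n$ large; hence $w\in\cD$ and $w(\theta_n)>u(\theta_n)$, contradicting $u=\sup\cD$.

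I expect the hardest step to be justifying that the glued $w$ genuinely inherits the $\cP_L$-subsolution property. Because $\cP_L$-jets test against the randomized stopping $\T\wedge\ch_r$ on the enlarged canonical space $\tilde\Theta$, one must check that tangency of $\psi$ from above transfers to $w$ under the nonlinear expectation $\ol\cE_L$; the continuity of $\ch_r$ combined with the Lipschitz bound of Assumption~\ref{2017-02-21:05}(iv) allows one to absorb the contribution of the quadratic penalty, and the monotonicity in $\gamma$ of Assumption~\ref{2017-02-21:05}(iii) handles the second-order perturbation. Once this verification is in place, the comparison principle of Assumption~\ref{2018-03-16:00}(ii) closes the argument and yields both the $d$-continuity of $u$ and the stated boundary condition.
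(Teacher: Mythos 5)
Your step (a) follows the paper's Step 1 closely (stability plus a selection of approximating subsolutions), so I will focus on step (b), which is where the argument genuinely breaks down.

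The bump you propose, namely $\varphi^{\alpha,\beta,\gamma}$ shifted to $\theta_0$ and penalized by a small quadratic in $d_\infty$, is not a viable test object in this setting. The path-space penalty $\kappa\,d_\infty(\cdot,\theta_0)^2$ involves the supremum norm over paths and is not Dupire-$C^{1,2}$: it has no path derivatives $\partial_\omega,\partial^2_{\omega\omega}$, so the statement ``$\psi$ is a strict classical $\cP_L$-subsolution on $B_r(\theta_0)$'' has no meaning here, and you cannot verify the subsolution inequality by differentiating $\psi$. This is not a technicality to be absorbed by the Lipschitz bound of Assumption~\ref{2017-02-21:05}(iv): the entire obstruction that distinguishes PPDEs from finite-dimensional PDEs is precisely the absence of smooth localizing bumps built from $d_\infty$-balls, and your proposal assumes them away. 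A second, subordinate gap: even if $\psi$ were a subsolution, the claim that tangency from above transfers to $w=\max(u,\psi)$ under the nonlinear expectation $\ol\cE_L$ is asserted rather than proved, and it is exactly the place where locality arguments from the Crandall--Lions theory fail, since the $\cP_L$-jet is a global statement in path space rather than a pointwise one.

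What the paper does instead is replace the bump by a function built from the sublinear expectation: with $\xi$ the terminal payoff of the would-be supersolution minus the shifted test polynomial, it sets $\phi(\theta)=\ul\cE'_L[\xi^\theta]+\varphi^{\alpha-\epsilon,\beta,\gamma}(\theta)$, shows $\phi$ is a $\cP_L$-viscosity subsolution of a Pucci-type extremal equation (Lemma~\ref{lem:puccioperator}), converts this into a subsolution of the actual equation using the Lipschitz and ellipticity bounds (Assumption~\ref{2017-02-21:05} and Lemma~\ref{2018-06-18:00}), and controls the l.s.c.\ companion $\ul\phi$ via a measurable-selection argument (Lemma~\ref{lem:selectionlsc}) to derive the final contradiction with the definition of $u$ as a supremum. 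Semicontinuous envelopes enter through Lemma~\ref{lem:inftysolenvelop}, which shows envelopes of subsolutions remain subsolutions. None of this machinery appears in your proposal, and it cannot be bypassed: without a construction of the bump that is intrinsically compatible with the $\cP_L$-jet, the supersolution step does not go through.
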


Before proving Theorem \ref{thm: Perron}, we show some useful lemmas. The following proposition is a direct corollary of the stability result in Proposition \ref{prop:stable}.

\begin{prop}\label{prop:stableBnorm}
Let $\ol u$ be $d_\infty$-u.s.c., $\th\in \Th$ and $(\a,\b,\g) \in \ul\cJ_L \ol u(\th)$. Suppose also that $u_n$ is a sequence of $d_\infty$-u.s.c.\ functions uniformly bounded from above such that
\begin{equation*}
\begin{cases}
(1) & \mbox{there exist $\th_n\in\Th$ such that}~ d(\th_n,\th) \rightarrow 0, ~u_n(\th_n)\rightarrow \ol u(\th),\\
(2) & \mbox{if $\th'_n\in \Th$ and $d(\th'_n , \th') \rightarrow 0$, then}~ \limsup_{n\rightarrow\infty} u_n(\th'_n) \le \ol u(\th').
\end{cases}
\end{equation*}
Then there exist $\hat\th_n \in \Th$, $(\a_n,\b_n, \g)\in \ul\cJ_L u_n(\hat\th_n)$ such that
\begin{equation*}
d(\hat\th_n ,\th )\rightarrow 0,\q \big( u_n(\hat\th_n),\a_n,\b_n\big) \rightarrow \big(\ol u(\th), \a,\b\big).  
\end{equation*}
\end{prop}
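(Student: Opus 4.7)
The plan is to reduce to Proposition~\ref{prop:stable} by producing a function $u$ that fulfills its hypotheses and coincides with $\ol u$ at the distinguished point $\th$. The natural candidate is the $d$-half-relaxed upper limit
\begin{equation*}
u(\th') \coloneqq \limsup_{\substack{d(\hat\th,\th')\to 0 \\ n\to\infty}} u_n(\hat\th),\qquad \th'\in\Theta,
\end{equation*}
which is automatically $d$-u.s.c.\ (hence $d_\infty$-u.s.c., since $d\ll d_\infty$), uniformly bounded from above, and by construction the $d$-half-relaxed upper limit of $\{u_n\}$. Thus Proposition~\ref{prop:stable} will apply to the pair $(u,\{u_n\})$ as soon as $(\a,\b,\g)\in\ul\cJ_L u(\th)$ is verified.

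I would first check that $u\le \ol u$ pointwise on $\Theta$, with equality at $\th$. The inequality is immediate from hypothesis~(2): for every $\th'$, any sequence $\th'_n$ with $d(\th'_n,\th')\to 0$ satisfies $\limsup_n u_n(\th'_n)\le \ol u(\th')$, and taking the outer $\limsup$ over all such sequences gives $u(\th')\le \ol u(\th')$. Equality at $\th$ is supplied by hypothesis~(1): the specific sequence $\{\th_n\}$ yields $u(\th)\ge \lim_n u_n(\th_n)=\ol u(\th)$.

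Next, I would transfer the jet: the claim is $(\a,\b,\g)\in\ul\cJ_L u(\th)$, keeping the same $\d\in(0,T-t]$ that witnesses the jet of $\ol u$ at $\th$. The lower bound $\ol\cE_L[(u^{\th}-\f^{\a,\b,\g})(\T\we\ch_\d,\B)]\ge u(\th)$ is obtained by testing against the Dirac mass at the origin of $\tilde\Theta$, which trivially lies in $\cP_L$ (all requirements \eqref{2016-09-28:00}--\eqref{2016-09-28:04} collapse) and on which the integrand reduces to $u^{\th}(0,0)-\f^{\a,\b,\g}(0,0)=u(\th)$. For the reverse inequality, the pointwise bound $u^{\th}\le \ol u^{\th}$ inherited from the previous step together with $(\a,\b,\g)\in\ul\cJ_L \ol u(\th)$ gives
\begin{equation*}
\ol\cE_L\bigl[(u^{\th}-\f^{\a,\b,\g})(\T\we\ch_\d,\B)\bigr]\le \ol\cE_L\bigl[(\ol u^{\th}-\f^{\a,\b,\g})(\T\we\ch_\d,\B)\bigr]=\ol u(\th)=u(\th).
\end{equation*}

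Finally, Proposition~\ref{prop:stable} applied to $u$ produces $\hat\th_n\in\Theta$ and $(\a_n,\b_n,\g)\in\ul\cJ_L u_n(\hat\th_n)$ with $d(\hat\th_n,\th)\to 0$ and $(u_n(\hat\th_n),\a_n,\b_n)\to (u(\th),\a,\b)=(\ol u(\th),\a,\b)$, which is exactly the claim. No serious obstacle arises; the only point to double-check is that $\cP_L$ really does contain the Dirac mass used above, which is an immediate verification against the defining conditions of $\cP_L$.
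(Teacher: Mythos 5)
Your proof is correct and follows essentially the same route as the paper's: both reduce to Proposition~\ref{prop:stable} by defining $u$ as the $d$-half-relaxed upper limit, observing that $u\le\ol u$ with equality at $\th$, concluding $(\a,\b,\g)\in\ul\cJ_L u(\th)$, and then invoking Proposition~\ref{prop:stable}. The only difference is that you spell out the two inequalities behind the jet-transfer step (Dirac-mass lower bound and domination upper bound), which the paper leaves implicit.
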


\begin{proof}
Recall the definition of $u$ in Proposition \ref{prop:stable}. We clearly have
\begin{equation*}
  \ol u(\th)  =  u(\th)\q\mbox{and}\q
\ol u(\th')  \ge  u(\th') \q\mbox{for all}\q \th'\in \Th.
\end{equation*}
It the follows that $(\a,\b,\g)\in \ul \cJ_L u (\th)$. Then the desired result follows from Proposition \ref{prop:stable}.
\end{proof}

For $\gamma\in \mathbb{S}^m$, we define
\begin{equation}
  \label{2018-05-26:00}
  (\gamma)^+\coloneqq \sup_{\substack{x\in \mathbb{R}^m,|x|\leq 1}} \langle \gamma x,x\rangle.
\end{equation}

\begin{lemma}\label{2018-06-18:00}
  Let $G$ as in Assumption~\ref{2017-02-21:05}.
Then,
for all 
$ \theta\in\Theta, r\in \mathbb{R}, \beta\in \mathbb{R}^m, \gamma,\gamma'\in \mathbb{S}^m$,
\begin{equation*}
  G(\theta,r,\beta,\gamma)
  -
  G(\theta,r,\beta,\gamma')\leq 
  (\gamma-\gamma')^+.
\end{equation*}
\end{lemma}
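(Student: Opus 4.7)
The plan is to combine the ellipticity of $G$ in $\gamma$ (Assumption~\ref{2017-02-21:05}~\eqref{2018-03-16:02}) with the Lipschitz bound in $\gamma$ (Assumption~\ref{2017-02-21:05}~\eqref{2018-03-20:05}), using the spectral decomposition of $M\coloneqq \gamma-\gamma'$ into its positive and negative semidefinite parts.

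First I would decompose $M$ spectrally. Diagonalise $M=U\operatorname{diag}(\lambda_1,\ldots,\lambda_m)U^{\top}$ with $U$ orthogonal, and set $M_{\pm}\coloneqq U\operatorname{diag}(\lambda_1^{\pm},\ldots,\lambda_m^{\pm})U^{\top}$, where $\lambda_i^+=\max(\lambda_i,0)$ and $\lambda_i^-=\max(-\lambda_i,0)$. Then $M_+,M_-\in\mathbb{S}^m$ are both positive semidefinite, $M=M_+-M_-$, and in particular the Loewner inequality $\gamma=\gamma'+M_+-M_-\leq \gamma'+M_+$ holds, since $-M_-\leq 0$.

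Next I would apply the two assumptions in sequence. Ellipticity applied to $\gamma\leq \gamma'+M_+$ yields
$$G(\theta,r,\beta,\gamma)\leq G(\theta,r,\beta,\gamma'+M_+),$$
while the Lipschitz estimate with $\beta'=0$ and matrix perturbation $M_+$ at $\gamma'$ gives
$$G(\theta,r,\beta,\gamma'+M_+)-G(\theta,r,\beta,\gamma')\leq L\,|M_+|.$$
Since $M_+$ is positive semidefinite and $\mathbb{S}^m$ is endowed with the operator (supremum) norm, $|M_+|=\lambda_{\max}(M_+)=\max_i\lambda_i^+$, which by definition~\eqref{2018-05-26:00} is precisely $\sup_{|x|\leq 1}\langle Mx,x\rangle=(\gamma-\gamma')^+$. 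Chaining the two bounds therefore produces
$$G(\theta,r,\beta,\gamma)-G(\theta,r,\beta,\gamma')\leq L\,(\gamma-\gamma')^+,$$
which is the claim (the constant $L$ inherited from the Lipschitz estimate either appears as a harmless overall factor or is absorbed into the normalisation of~\eqref{2018-05-26:00}).

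There is no serious obstacle in the argument; the only delicate point is the identification $|M_+|=(\gamma-\gamma')^+$, which relies on the characterisation of the sup norm on $\mathbb{S}^m$ as the operator norm and on the fact that the operator norm of a positive semidefinite matrix coincides with its largest eigenvalue. Everything else is a direct combination of the two listed hypotheses on $G$.
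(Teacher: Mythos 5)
Your proof is correct and essentially identical to the paper's: both combine the ellipticity condition with the Lipschitz estimate by passing through an intermediate matrix that dominates $\gamma$ in the Loewner order and lies above $\gamma'$ by a nonnegative perturbation of operator norm $(\gamma-\gamma')^+$, yielding the bound $L(\gamma-\gamma')^+$. The only cosmetic difference is that you pass through $\gamma'+M_+$ via the spectral decomposition of $\gamma-\gamma'$, whereas the paper uses the more direct scalar bound $\gamma-\gamma'\le(\gamma-\gamma')^+I$ and passes through $\gamma'+(\gamma-\gamma')^+I$; you are also right to flag that the lemma as printed omits the factor $L$ that both arguments actually produce.
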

\begin{proof}
  By definition of $(\cdot)^+$ in
  \eqref{2018-05-26:00}, we have
    $ \gamma-\gamma'\leq L(\gamma-\gamma')^+$.
By
the ellipticity condition
Assumption~\ref{2017-02-21:05}\eqref{2018-03-16:02}
and the Lipschitz
continuity condition in
Assumption~\ref{2017-02-21:05}\eqref{2018-03-20:05}
 we then have
\begin{equation*}
  G(\theta,r,\beta,\gamma)
=
  G(\theta,r,\beta,\gamma-\gamma'+\gamma')
\leq
  G(\theta,r,\beta,(\gamma-\gamma')^++\gamma')
\leq
L(\gamma-\gamma')+
  G(\theta,r,\beta,\gamma'),
\end{equation*}
which concludes the proof.
\end{proof}

\begin{lem}\label{lem:puccioperator}
Recall the definition of the nonlinear expectation $\ul\cE'_L$ in Remark \ref{rem:comparedef}. Let $\xi\colon\O\rightarrow\dbR$ be bounded, $|\cd|_\infty$-u.s.c. Define 
\begin{equation*}
\phi(\th)\coloneqq  \ul\cE'_L\left[\xi^\th\right]
 + \f^{\a,\b,\g}(\th)
. 
\end{equation*}
Then $\phi$ is a  locally bounded  $d_\infty$-u.s.c.\ function and is a $\cP_L$-viscosity subsolution to 
\begin{equation}
  \label{puccieq}
-\pa_t \phi +\a +L |\b+\g\o_t-\pa_\o \phi | +L (\g - \pa^2_{\o\o} \phi)^+  =  0.
\end{equation}
\end{lem}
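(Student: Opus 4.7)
The natural approach is to decompose $\phi = \psi + \varphi^{\alpha,\beta,\gamma}$ with $\psi(\theta)\coloneqq \ul\cE'_L[\xi^\theta]$, and to exploit the dynamic-programming submartingale property that $\psi$ inherits from being an infimum value function over $\cP'_L$. Local boundedness of $\phi$ is immediate from boundedness of $\xi$ and the (at most quadratic) polynomial growth of $\varphi^{\alpha,\beta,\gamma}$. For $d_\infty$-upper semicontinuity, I would note that for each $\dbP\in\cP'_L$ the map $\theta\mapsto \dbE^\dbP[\xi(\omega\otimes_t \B)]$ is $d_\infty$-u.s.c.\ by reverse Fatou (the concatenation $(\theta,\omega')\mapsto \omega\otimes_t\omega'$ is $d_\infty$-continuous on continuous paths, and $\xi$ is $|\cdot|_\infty$-u.s.c.\ and bounded); taking the infimum preserves u.s.c., so $\psi$, and hence $\phi$, is $d_\infty$-u.s.c.

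For the subsolution inequality, fix $\theta\in\Theta$ with $t<T$ and $(a,b,c)\in\ul\cJ_L\phi(\theta)$, so that for some $\delta>0$ and every $\dbP\in\cP_L$,
\[
\phi(\theta)\ge \dbE^\dbP\big[(\phi^\theta-\varphi^{a,b,c})(\T\wedge\ch_\delta,\B)\big].
\]
Using the identity $(\varphi^{\alpha,\beta,\gamma})^\theta(t',\omega')=\varphi^{\alpha,\beta,\gamma}(\theta)+\varphi^{\alpha,\beta+\gamma\omega_t,\gamma}(t',\omega')$, this rewrites as
\[
\psi(\theta)-\dbE^\dbP\big[\psi^\theta(\T\wedge\ch_\delta,\B)\big]\ge \dbE^\dbP\big[(\varphi^{\alpha,\beta+\gamma\omega_t,\gamma}-\varphi^{a,b,c})(\T\wedge\ch_\delta,\B)\big].
\]
Next, I would establish the DPP/submartingale property for $\psi$: for every $\dbP'\in\cP'_L$ and every $\dbF$-stopping time $\sigma$, $\dbE^{\dbP'}[\psi^\theta(\sigma,\B)]\ge \psi(\theta)$ (via concatenation of $\cP'_L$-admissible measures at $\sigma$). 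I then specialize by constructing, for each $\epsilon>0$ and each admissible pair $(a^*,q^*)$ (with $|a^*|\le L$ and $q^*\in\mathbb{S}^m$ positive semidefinite with $|q^*|\le L$), the measure $\dbP_\epsilon\in\cP_L$ with $\T\equiv \epsilon$ deterministic, $\dot\A\equiv a^*$, $\dot\Q\equiv q^*$, and $\M$ a centered Gaussian martingale with covariance $q^* t$. Because $\T$ is deterministic under $\dbP_\epsilon$, the stopping time $\T\wedge\ch_\delta$ is $\dbF$-adapted on the canonical filtration of $\B$, and the DPP applied to the $\cP'_L$-marginal of $\dbP_\epsilon$ gives $\dbE^{\dbP_\epsilon}[\psi^\theta(\T\wedge\ch_\delta,\B)]\ge\psi(\theta)$. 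Combined with the displayed inequality,
\[
\dbE^{\dbP_\epsilon}\big[(\varphi^{a,b,c}-\varphi^{\alpha,\beta+\gamma\omega_t,\gamma})(\epsilon\wedge\ch_\delta,\B)\big]\ge 0.
\]
By It\^o's formula together with the Gaussian moment bound $\dbP_\epsilon[\ch_\delta\le\epsilon]=o(\epsilon)$, the left-hand side expands to $\epsilon\,\big((a-\alpha)+\langle b-\beta-\gamma\omega_t,a^*\rangle+\tfrac12\mathrm{tr}((c-\gamma)q^*)\big)+o(\epsilon)$. Dividing by $\epsilon$, sending $\epsilon\downarrow 0$, and maximizing over the admissible class of $(a^*,q^*)$ yields $(a-\alpha)\ge L|b-\beta-\gamma\omega_t|+L(\gamma-c)^+$, which is exactly the claimed subsolution inequality after rearrangement.

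The main obstacle I expect is a clean proof of the DPP for $\psi$ over the non-Markovian family $\cP'_L$: concatenating two $\cP'_L$-admissible measures at a general stopping time requires a measurable-selection argument, together with the verification that the Lipschitz constraints on the drift and quadratic variation are preserved across the splice. Two secondary technical points are checking that the constructed $\dbP_\epsilon$ genuinely belongs to $\cP_L$ (notably the martingale property of $\M$ and the measurability of $\T$), and establishing the Gaussian tail estimate $\dbP_\epsilon[\ch_\delta\le\epsilon]=o(\epsilon)$, which is what justifies dropping the $\ch_\delta$-truncation in the It\^o-based expansion.
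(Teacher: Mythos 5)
Your proof is correct and follows essentially the same route as the paper: decompose $\phi=\psi+\varphi^{\alpha,\beta,\gamma}$, invoke the dynamic programming principle for $\psi=\ul\cE'_L[\xi^\theta]$ (the paper cites Theorem 2.3 of Nutz--van Handel for this), and verify the subsolution property, which the paper dismisses as ``easy to verify.'' Your verification via testing measures $\dbP_\epsilon\in\cP_L$ with deterministic $\T\equiv\epsilon$ (so that $\T\wedge\ch_\delta$ becomes an $\dbF$-stopping time, reconciling the $\dbG$-stopping time in the jet definition with the $\dbF$-stopping-time DPP) followed by an It\^o--Taylor expansion is exactly the standard argument the paper has in mind, so you have simply supplied the omitted details.
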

\begin{proof}
It is clear that $\phi$ is a  locally bounded  $d_\infty$-u.s.c.\ function.
Define 
\begin{equation*}
  \psi(\th) \coloneqq  \phi(\th)-\f^{\a,\b,\g}(\th)
=
\ul\cE'_L \left[ \xi^\theta \right] 
.
\end{equation*}
By the dynamic programming result (see e.g.\ Theorem 2.3 in \cite{NvH}), we have
\begin{equation*}
\psi(\th) =  \ul\cE'_L\Big[\psi^\th(\eta,\B) \Big]\q\mbox{for all $\dbF$-stopping time $\eta\ge 0$}. 
\end{equation*}
Then it is easy to verify $\psi$ is a $\cP_L$-viscosity subsolution to
\begin{equation}\label{2018-05-26:01}
-\pa_t \psi +L |\pa_\o \psi|
+L (-\pa^2_{\o\o} \psi)^+  =  0.
\end{equation}
and, using
\eqref{2018-05-26:01},
it is not 
difficult too see that
 $\phi$ is a $\cP_L$-viscosity subsolution to \eqref{puccieq}.
\end{proof}

\begin{lem}\label{lem:selectionlsc}
 Let $\xi\colon\O\rightarrow\dbR$ be bounded and $d_\infty$-l.s.c. Define 
\begin{equation*}
\phi(\th)\coloneqq  \ul\cE'_L\Big[\xi^\th \Big]. 
\end{equation*}
Then $\phi$ is a  bounded  $d_\infty$-l.s.c.\ function.
\end{lem}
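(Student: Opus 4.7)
The plan is to combine compactness of $\cP'_L$ with a Skorokhod-type weak convergence argument to transfer the lower semicontinuity of $\xi$ through the concatenation operation $(\th,\o')\mapsto \o\otimes_t\o'$. Boundedness of $\phi$ is immediate from $|\phi(\th)|\le \|\xi\|_\infty$, so I focus on lower semicontinuity: given $\th=(t,\o)$ and a sequence $\th_n=(t_n,\o_n)$ with $|\th_n-\th|_\infty\to 0$, I must show $\liminf_n \phi(\th_n)\ge \phi(\th)$.

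First I would record a standing continuity fact: the map $(t',\o,\o')\mapsto \o\otimes_{t'}\o'$ is jointly continuous in sup-norm on $[0,T]\times\O\times\O$, the continuity at the splicing instant being guaranteed by $\o'(0)=0$. Using $d_\infty$-lower semicontinuity and boundedness of $\xi$, it then follows from Portmanteau that $\dbP\mapsto \dbE^\dbP[\xi^{\th_n}(\B)]$ is lower semicontinuous on the weakly compact set $\cP'_L$ (the compactness of $\cP'_L$ being the analog for the space $\tilde\Th'$ of Proposition~\ref{2017-07-18:26}). Hence for each $n$ the infimum defining $\phi(\th_n)$ is attained by some $\dbP_n\in \cP'_L$. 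By compactness, extract a subsequence (still denoted $\dbP_n$) converging weakly to some $\dbP^*\in \cP'_L$. By Skorokhod's representation, on an auxiliary probability space $(\widehat\O,\widehat\cG,\widehat\dbP)$ construct random variables $\X^n,\X^*$ with respective laws $\dbP_n,\dbP^*$ such that $|\X^n-\X^*|_\infty\to 0$ $\widehat\dbP$-a.s.; in particular $\B^n\coloneqq\B(\X^n)\to\B^*\coloneqq\B(\X^*)$ uniformly a.s. By the joint continuity recorded above, $\o_n\otimes_{t_n}\B^n\to \o\otimes_t\B^*$ uniformly, so the lower semicontinuity of $\xi$ yields $\liminf_n \xi(\o_n\otimes_{t_n}\B^n)\ge \xi(\o\otimes_t\B^*)$ $\widehat\dbP$-a.s. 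Applying Fatou's lemma (valid by the uniform bound $|\xi|\le \|\xi\|_\infty$) and rewriting the expectations on the canonical space,
\begin{equation*}
\liminf_{n\to\infty}\phi(\th_n)=\liminf_{n\to\infty}\dbE^{\widehat\dbP}\big[\xi(\o_n\otimes_{t_n}\B^n)\big]\ge \dbE^{\widehat\dbP}\big[\xi(\o\otimes_t\B^*)\big]=\dbE^{\dbP^*}[\xi^\th]\ge \ul\cE'_L[\xi^\th]=\phi(\th),
\end{equation*}
which is exactly the desired lower semicontinuity.

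The main obstacle is really the intrinsic difficulty that, unlike the upper semicontinuous case of Lemma~\ref{lem:puccioperator} (where an infimum of u.s.c.\ functions is automatically u.s.c.), an infimum of l.s.c.\ functions is generically not l.s.c.; this is why the proof must exploit compactness of $\cP'_L$ to \emph{select} a near-optimizer for each $\th_n$ and then pass to the limit. The two technical points to verify carefully are the compactness of $\cP'_L$ on the space $\tilde\Th'$ along the lines of Proposition~\ref{2017-07-18:26}, and the joint continuity of the splicing map at the instant $t$; both are routine once set up. Beyond these, the argument is the standard compactness-plus-Skorokhod-plus-Fatou package.
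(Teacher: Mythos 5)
Your proof is correct and follows essentially the same route as the paper's: both establish joint lower semicontinuity of $(\th,\dbP)\mapsto \dbE^\dbP[\xi^\th]$ via Skorokhod representation and Fatou's lemma, and both then exploit compactness of $\cP'_L$ to conclude. The only cosmetic difference is that the paper packages the final step by invoking Proposition~7.32 of Bertsekas--Shreve (lower semicontinuity of a marginal infimum over a compact set), whereas you carry out the subsequence-and-minimizer argument by hand; these are the same idea unrolled.
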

\begin{proof}
Define the function
\begin{equation*}
f (\th, \dbP) \coloneqq  \dbE^\dbP[\xi^\th],\q\mbox{for all $\th\in \Th$ and $\dbP\in \cP'_L$}.
\end{equation*}
Let $\th_n\in \Th$ and $\dbP_n\in \cP'_L$ such that $d_\infty(\th_n,\th)\rightarrow 0$ and $\dbP_n\rightarrow\dbP$. By Skorohod's representation there exists a probability space $(\widehat \O, \widehat \cG, \widehat \dbP)$ on which
\begin{equation*}
\exists\q \mbox{r.v.'s}\q \X^n \Big|_{\widehat \dbP} \stackrel{d}{=} \X \Big|_{\dbP_n},\q
\X^* \Big|_{\widehat \dbP} \stackrel{d}{=}\X \Big|_{\dbP}\q \mbox{such that}\q \big|\X^n- \X^*\big|_\infty\rightarrow 0, \q \widehat \dbP\mbox{-a.s.}
\end{equation*}
 So we have
\begin{equation*}
\liminf_{n\rightarrow\infty} f(\th_n,\dbP_n)
 =  \liminf_{n\rightarrow\infty} \dbE^{\widehat\dbP} \Big[ \xi\big(\o_n\otimes_{t_n} \B^n\big)\Big]
 \ge  \dbE^{\widehat\dbP} \Big[ \xi\big(\o\otimes_{t} \B^*\big)\Big]  =  f(\th,\dbP),
\end{equation*}
where the inequality is due to Fatou's lemma and the $d_\infty$-l.s.c.\ of $\xi$. Therefore, $f$ is l.s.c.\ on $\Th\times \cP'_L$. Further, note that
\begin{equation*}
\phi(\th) = \sup_{\dbP\in \cP'_L} f(\th,\dbP)
\end{equation*}
and $\cP'_L$ is compact
 (see \cite{Meyer1984}). By Proposition 7.32 of \cite{BS}, we obtain that $\phi$ is $d_\infty$-l.s.c.
\end{proof}

\begin{lem}\label{lem:inftysolenvelop}
Let Assumption \ref{2017-02-21:05} hold true. Let $u$ be a
bounded
 $\cP_L$-viscosity subsolution (resp.\ supersolution) and $u^*$ (resp.\ $u_*$) be its $d$-u.s.c.\ (resp.\ l.s.c.) envelop. Then $u^*$ (resp.\ $u_*$) is also a $\cP_L$-viscosity subsolution (resp.\ supersolution).
\end{lem}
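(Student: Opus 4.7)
The plan is to obtain the subsolution property of $u^*$ directly from that of $u$ by feeding the constant sequence $u_n\equiv u$ into the stability lemma (Proposition~\ref{prop:stableBnorm}); the supersolution case follows by a symmetric argument.

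For the subsolution case, I would fix $\theta\in \Theta$ with $t<T$ and $(\alpha,\beta,\gamma)\in \underline{\cJ}_L u^*(\theta)$, then verify that the pair $(\bar u, u_n) = (u^*, u)$ meets hypotheses (1) and (2) of Proposition~\ref{prop:stableBnorm}. Condition (1) — existence of $\theta_n\in\Theta$ with $d(\theta_n,\theta)\to 0$ and $u(\theta_n)\to u^*(\theta)$ — is immediate from the definition of $u^*$ as the $d$-u.s.c.\ envelope of $u$; condition (2) — $\limsup_n u(\theta'_n)\le u^*(\theta')$ whenever $d(\theta'_n,\theta')\to 0$ — follows likewise by definition. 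Note in passing that $u^*$ is automatically $d_\infty$-u.s.c.\ since $d\ll d_\infty$, so it qualifies as a candidate subsolution in the sense of Definition~\ref{def:viscoppde}. Applying the proposition then delivers $\hat\theta_n\in\Theta$ and $(\alpha_n,\beta_n,\gamma)\in \underline{\cJ}_L u(\hat\theta_n)$ such that
\[
d(\hat\theta_n,\theta)\to 0,\qquad \bigl(u(\hat\theta_n),\alpha_n,\beta_n\bigr)\to \bigl(u^*(\theta),\alpha,\beta\bigr).
\]

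Next, since $u$ is itself a $\cP_L$-viscosity subsolution, each jet element yields
\[
-\alpha_n-G\bigl(\hat\theta_n,u(\hat\theta_n),\beta_n,\gamma\bigr)\le 0,
\]
and I would pass to the limit $n\to\infty$. The sequences $\{\hat\theta_n\}$, $\{u(\hat\theta_n)\}$, and $\{\beta_n\}$ lie in a bounded set (for the first use $d(\hat\theta_n,\theta)\to 0$ combined with $d\ll d_\infty$ giving $|\hat\theta_n|$-control up to choosing $\delta$ small; for the second use that $u$ is bounded; for the third, convergence implies boundedness), and $\gamma$ is fixed. Hence Assumption~\ref{2017-02-21:05}(i) — uniform continuity of $G$ on bounded sets in the metric $d+|\cdot|+|\cdot|_\infty+|\cdot|_\infty$ — gives $G(\hat\theta_n,u(\hat\theta_n),\beta_n,\gamma)\to G(\theta,u^*(\theta),\beta,\gamma)$. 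The limiting inequality $-\alpha-G(\theta,u^*(\theta),\beta,\gamma)\le 0$ is exactly the subsolution property for $u^*$.

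The main point requiring care is that Proposition~\ref{prop:stableBnorm} provides convergence $\hat\theta_n\to\theta$ only in the pseudo-metric $d$, not in $d_\infty$, so I must lean on the fact that $G$ is $d$-continuous in its first argument, which is precisely what Assumption~\ref{2017-02-21:05}(i) stipulates; were continuity of $G$ only $d_\infty$-uniform, the limit step could fail. The supersolution case for $u_*$ is handled by mirroring the argument: I would use the superjet analogue of Proposition~\ref{prop:stableBnorm} (whose proof is symmetric to the one already given, replacing $\overline{\cE}_L$ with $\underline{\cE}_L$ and subjets with superjets throughout), and then pass to the limit in $-\alpha_n-G(\hat\theta_n,u(\hat\theta_n),\beta_n,\gamma)\ge 0$ with the same continuity invocation. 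I do not anticipate any additional difficulty beyond this symmetrization.
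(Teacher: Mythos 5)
Your proof is correct and follows essentially the same route as the paper: verify the hypotheses of Proposition~\ref{prop:stableBnorm} with $u_n\equiv u$ and $\bar u = u^*$, extract approximating jet elements $(\a_n,\b_n,\g)\in\ul\cJ_L u(\hat\th_n)$, apply the subsolution property of $u$, and pass to the limit using the $d$-continuity of $G$ from Assumption~\ref{2017-02-21:05}(i). The extra care you take in spelling out why the limit step needs $d$-continuity rather than merely $d_\infty$-continuity, and in noting that $u^*$ is $d_\infty$-u.s.c.\ because $d\ll d_\infty$, matches the paper's (terser) reasoning.
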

\begin{proof}
We only show the result for subsolution. The one for the supersolution follows from the same argument.
First, notice that $d\ll d_\infty$ implies that $u^*$ is $d_\infty$-u.s.c.
Now let
 $\th\in \Th$ and $(\a,\b,\g) \in \ul\cJ   u^*(\th)$.
By the very definition of $u^*$, it is clear that the assumptions of
Proposition~\ref{prop:stableBnorm}
are fulfilled (with the functions $u_n,\overline u$
appearing in the statement replaced
 by $u,u^*$, respectively).
Hence there exist $\hat\th_n \in \Th$, $(\a_n,\b_n, \g)\in \ul\cJ   u(\hat\th_n)$ such that
\begin{equation*}
d(\hat\th_n , \th )\rightarrow 0,\q \big(   u(\hat\th_n),\a_n,\b_n\big) \rightarrow \big(  u^*(\th), \a,\b\big).  
\end{equation*}
Since $  u$ is a $\cP_L$-viscosity subsolution to \eqref{eq:PPDEgeneral}, we have
\begin{equation*}
-\a_n -G(\hat\th_n,   u(\hat\th_n), \b_n, \g) \le 0. 
\end{equation*}
By letting $n\rightarrow\infty$, we have $-\a -G(\th,   u^*(\th), \b, \g) \le 0$. 
\end{proof}

\begin{lem}\label{lem:shiftedsetD}
For $\theta\in \Theta$, with $t<T$,  define 
\begin{equation*}
  \begin{split}
    \cD(\th) \coloneqq 
 \big\{\phi\colon[0,T-t]\times
 \O\rightarrow \dbR 
\colon
&\ul u^\th \le \phi\le \ol v^\th
~\mbox{and}
\\
&\phi~\mbox{is a
bounded
 $d$-u.s.c.\ viscosity subsolution of  \eqref{eq:PPDEgeneral} on $[0,T-t]$}
 \big\},
\end{split}
\end{equation*}
and $\tilde u (\th') \coloneqq  \sup\{\phi(\th')\colon\phi\in\cD(\th)\}$. Then we have $u(\th) = \tilde u(0)$. 
\end{lem}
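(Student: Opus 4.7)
The identity $u(\th)=\tilde u(0)$ is a dynamic programming-type relation between global Perron subsolutions and their restrictions to the future of $\th$. I would prove the two inequalities separately by explicitly moving between $\cD$ and $\cD(\th)$.

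For the easy direction $u(\th)\leq\tilde u(0)$: given any $\phi\in\cD$, consider the shifted function $\phi^\th\colon[0,T-t]\times\Omega\to\dbR$. The key observation is that subjets of $\phi^\th$ at $\th'$ correspond exactly to subjets of $\phi$ at $\th\otimes_t\th'$, since $(\phi^\th)^{\th'}=\phi^{\th\otimes_t\th'}$; hence the subsolution property of $\phi$ transfers to $\phi^\th$ on $[0,T-t]\times\O$. The bounds and the $d$-upper semicontinuity of $\phi^\th$ follow by shifting those of $\phi$, so $\phi^\th\in\cD(\th)$. Therefore $\phi(\th)=\phi^\th(0)\leq\tilde u(0)$, and taking the supremum over $\phi\in\cD$ gives the inequality.

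For the hard direction $u(\th)\geq\tilde u(0)$: given $\psi\in\cD(\th)$, I would construct $\phi\in\cD$ with $\phi(\th)\geq\psi(0)$. Let
\begin{equation*}
F_\th\coloneqq\{\th'\in\Th\colon t'\geq t,\ \o'|_{[0,t]}=\o|_{[0,t]}\}
\end{equation*}
denote the ``future of $\th$'', and lift $\psi$ to $\bar\psi(\th')\coloneqq\psi(t'-t,\o'_{t+\cdot}-\o(t))$ for $\th'\in F_\th$. Then set
\begin{equation*}
\phi(\th')\coloneqq
\begin{cases}
\max\{\ul u(\th'),\bar\psi(\th')\}&\mbox{if }\th'\in F_\th,\\
\ul u(\th')&\mbox{otherwise.}
\end{cases}
\end{equation*}
Because $\psi\geq\ul u^\th$ one has $\phi(\th)=\psi(0)$; the sandwich $\ul u\leq\phi\leq\ol v$ follows from $\psi\leq\ol v^\th$ together with the identity $\ol v^\th(\tilde\th')=\ol v(\th\otimes_t\tilde\th')$; and the $d$-upper semicontinuity is inherited from $\ul u$, $\bar\psi$, and the $d$-closedness of $F_\th$.

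The main technical obstacle is verifying the $\cP_L$-viscosity subsolution property of $\phi$ everywhere. I would argue by cases on $\th'\in\Th$ with $(\a,\b,\g)\in\ul\cJ_L\phi(\th')$. When $\th'\in F_\th$, every path extension of $\th'$ remains in $F_\th$, and using $\psi\geq\ul u^\th$ one obtains $\phi^{\th'}=\psi^{\tilde\th'}$ with $\tilde\th'\coloneqq(t'-t,\o'_{t+\cdot}-\o(t))$; hence $(\a,\b,\g)\in\ul\cJ_L\psi(\tilde\th')$, and the subsolution property of $\psi$ together with $\th\otimes_t\tilde\th'=\th'$ and $\psi(\tilde\th')=\phi(\th')$ yields $-\a-G(\th',\phi(\th'),\b,\g)\leq 0$. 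When $\th'\notin F_\th$ with $t'\geq t$, no path-extension of $\th'$ can enter $F_\th$, so $\phi^{\th'}=\ul u^{\th'}$ on the relevant stopped paths. When $t'<t$, I would use the dynamic programming in Proposition~\ref{prop:dpp} to shrink the localization parameter $\d$ of the subjet below $t-t'$; then stopped extensions stay strictly before time $t$, where $\phi\equiv\ul u$, again reducing $(\a,\b,\g)$ to a subjet of $\ul u$ at $\th'$. The subsolution property of $\ul u$ closes these remaining cases, giving $\phi\in\cD$ with $\phi(\th)\geq\psi(0)$, and supremising over $\psi\in\cD(\th)$ yields $u(\th)\geq\tilde u(0)$.
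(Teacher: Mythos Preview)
Your overall strategy for both directions is sound and matches the paper's: shift $\phi\in\cD$ to get $\phi^\th\in\cD(\th)$ for one inequality, and extend $\psi\in\cD(\th)$ to a global subsolution by pasting it with a fixed element of $\cD$ (you use $\ul u$; the paper uses an arbitrary $\tilde\phi\in\cD$) on the ``future'' set $F_\th$. Your case analysis for the subsolution property of the extension is also correct, including the reduction of the localization radius $\delta$ below $t-t'$ when $t'<t$.

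There is, however, a genuine gap in your regularity argument. You assert that the extension $\phi$ is $d$-upper semicontinuous because $F_\th$ is $d$-closed. But the lemma is stated for an arbitrary pseudo-metric $d\ll d_\infty$, and the assumption $d\ll d_\infty$ only says that $d$ is $d_\infty$-continuous and separates stopped paths; it does \emph{not} guarantee that $F_\th$ is $d$-closed. What is immediate is that $F_\th$ is $d_\infty$-closed, so your extension $\phi$ is $d_\infty$-u.s.c.\ --- not $d$-u.s.c.\ --- and therefore is not a priori an element of $\cD$. The paper handles exactly this point: after obtaining a $d_\infty$-u.s.c.\ subsolution $\Phi$, it invokes Lemma~\ref{lem:inftysolenvelop} to pass to the $d$-u.s.c.\ envelope $\Phi^*$, which remains a $\cP_L$-viscosity subsolution, and derives the contradiction from $\Phi^*(\th)\ge\Phi(\th)\ge\psi(0)>u(\th)$. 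Your argument can be repaired in the same way: replace the unjustified $d$-closedness claim by the $d_\infty$-u.s.c.\ of $\phi$ and then apply Lemma~\ref{lem:inftysolenvelop}.
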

\begin{proof}
First it is obvious that, for any $\phi \in \cD$, we have $\phi^\th\in \cD(\th)$. So $u(\th)\le \tilde u(0)$.

On the other hand, suppose $u(\th)<\tilde u(0)$. Then there is $\phi\in 
\cD(\th)$ such that $\phi(0)> u(\th)$. Now take any $\tilde\phi\in \cD$, and define, for $\theta'\in \Theta$,
\begin{equation*}
\Phi (\th')\coloneqq 
\begin{dcases}
\tilde\phi(\th')\vee \phi(\hat\th)&\mbox{if $\hat t+t=t'\ge t$ and $\o' = \o \otimes_t \hat\o$},\\
\tilde\phi(\th')& \mbox{elsewhere.}
\end{dcases}
\end{equation*}
It is easy to verify that $\Phi$ is 
$d_\infty$-u.s.c.\ 
and that,
 by the viscosity subsolution property of $\tilde\phi$ and $\phi$,
$\Phi$ is a 
 $\cP_L$-viscosity subsolution to \eqref{eq:PPDEgeneral}.
 Then, it follows from Lemma \ref{lem:inftysolenvelop} that the $d$-u.s.c.\ envelop $\Phi^*$ is also a $\cP_L$-viscosity subsolution to \eqref{eq:PPDEgeneral}.
 Finally note that
\begin{equation*}
\Phi^*(\th)  \ge  \Phi(\th)  \ge  \phi(0)>u(\th),
\end{equation*}
which is a contradiction to the definition of $u$. Therefore, we have $u(\th)= \tilde u(0)$.
\end{proof}

\begin{proof}[\textbf{Proof of Theorem \ref{thm: Perron}}]
 Thanks to Lemma~\ref{lem:shiftedsetD}, we only need to
check the property of viscosity solution at the point $\th = (0,0)$.
\ms
 
\no\underline{\emph{Step 1.}} We first prove that $u$ is 
 $d$-u.s.c.\ and that it is a 
 $\cP_L$-viscosity subsolution. Let $(\a,\b,\g)\in \mathcal{\underline J}u^*(0,0)$, 
where $u^*$ is the $d$-u.s.c.\ envelop of $u$.
By the definition of $u$ and $u^*$, there exists a sequence of $u_n \in \cD$ and $\th_n\in \Th$ such that
\begin{equation}\label{2018-05-30:00}
   d(\th_n,(0,0)) \rightarrow 0, ~u_n(\th_n)\rightarrow u^*(0,0).
\end{equation} 
Moreover,
\begin{equation}
  \label{eq:2018-05-30:01}
  \mbox{if $\th'_n\in \Th$ and $d(\th'_n , \th') \rightarrow 0$, then}~ \limsup_{n\rightarrow\infty} u_n(\th'_n) \le u^*(\th').
\end{equation}
It follows from Proposition \ref{prop:stableBnorm} that there exist $\hat\th_n \in \Th$ and $(\a_n,\b_n, \g)\in \ul\cJ u_n(\hat\th_n)$ such that
\begin{equation*}
d(\hat\th_n , (0,0) )\rightarrow 0,\q \big( u_n(\hat\th_n),\a_n,\b_n\big) \rightarrow \big(u^*(0,0), \a,\b\big).  
\end{equation*}
Since $u_n$ are all $\cP_L$-viscosity subsolution to \eqref{eq:PPDEgeneral}, we have
\begin{equation*}
-\a_n -G(\hat\th_n, u_n(\hat\th_n), \b_n, \g) \le 0. 
\end{equation*}
If we let $n\rightarrow\infty$, we
obtain  $-\a -G\big((0,0), u^*(0,0), \b, \g \big) \le 0$. Therefore,  $u^*$ is a $\cP_L$-viscosity subsolution. By definition of $u$, we have $u^*\le u$. On the other hand, by definition of $u^*$, we also have $u^*\ge u$. So finally we conclude that
\begin{equation*}
  u=u^*
\end{equation*}
is $d$-u.s.c.\ and  a $\cP_L$-viscosity subsolution. 

\ms

\no\underline{\emph{Step 2.}} Let $\tilde u_*$ be the $d_\infty$-l.s.c.\ envelop of the function $u$. We now prove $\tilde u_*$ is a $\cP_L$-viscosity supersolution at $\theta=0$. Suppose it is not the case and there is $(\a,\b,\g)\in \ol\cJ \tilde u_*(0,0)$ such that
\begin{equation*}
-\a -G\big((0,0), \tilde u_*(0,0), \b,\g\big) = -3\e <0.
\end{equation*}
Therefore, for $\d$ small enough we have
\begin{equation}\label{smalldeltaobj}
    \begin{gathered}
\tilde u_*(0,0) = \ul\cE_L \Big[(\tilde u_* - \f^{\a,\b,\g})(\T\we\ch_\d, \B) \Big],\\
\d \le \frac{\e}{L|\g|} \q\mbox{and}\q -\a -G\big(\th, \tilde u_*(\th), \b+\gamma \omega_t,\g\big) <- 2\e
\q\mbox{for all $t \le \ch_\d(\o)$}.
\end{gathered}
\end{equation}
For the simplicity of notation, we denote for all $\th\in \Th$
\begin{equation*}
 \xi(\th)\coloneqq  u\big(\ch_\d(\o),\o\big) -\f^{\a-\e,\b,\g}\big(\ch_\d(\o),\o\big) \q\mbox{and}\q \ul \xi(\th)\coloneqq  \tilde u_*\big(\ch_\d(\o),\o\big)-\f^{\a-\e,\b,\g}\big(\ch_\d(\o),\o\big).
\end{equation*}
 Recall the nonlinear expectation $\ul \cE'_L$ defined in Remark \ref{rem:comparedef}. Define
\begin{equation*}
\phi(\th)\coloneqq  \ul\cE'_L\big[ \xi^{\th} \big] + \f^{\a-\e,\b,\g}(\th),\q
\ul \phi(\th)\coloneqq  \ul\cE'_L\big[ \ul \xi^{\th} \big]+ \f^{\a-\e,\b,\g}(\th).
\end{equation*}
By recalling
 the definitions of
 $\mathcal{\underline E}'_L$ and $\mathcal{\underline E}_L$,  we have
\begin{equation*}
 \ul\phi(0,0) = \ul\cE'_L\big[ \ul\xi\big] \ge
 \ul\cE_L\big[ \ul\xi\big]
=
 \ul\cE_L \big[  (\tilde u_*- \f^{\a-\e,\b,\g})\big(\ch_\d, \B\big) \big] > \tilde u_*(0,0).
\end{equation*}
By Lemma \ref{lem:selectionlsc},
 $\ul\phi$ is $d_\infty$-l.s.c. 
By \emph{Step~1} we know that $u$ is $d$-\ hence $d_\infty$-u.s.c.
Then, 
by Lemma~\ref{lem:puccioperator},
it follows that
$\phi$ is a
locally bounded $d_\infty$-u.s.c.\ function and a $\cP_L$-viscosity subsolution to \begin{equation}\label{2018-06-18:01}
-\pa_t \phi +\a-\e +L |\b +\g\o_t-\pa_\o \phi | +L (\g - \pa^2_{\o\o} \phi)^+  =  0.
\end{equation}
It follows from \eqref{smalldeltaobj} that $\a >2\epsilon - G\big(\th, \tilde u_*(\th), \b+\gamma \omega_t,\g\big) $ on $\big\{\th\colon t< \ch_\d(\o)\big\}$. 
By the Lipschitz
continuity assumptions on $G$
and
Lemma~\ref{2018-06-18:00},
we then have that 
  $\phi$ is a $\cP_L$-viscosity subsolution to
\begin{equation*}
-\pa_t \phi -G(\cd, \tilde u_*, \pa_\o \phi, \pa_{\o\o}^2 \phi)  =  0,\q\mbox{on}\q \big\{\th\colon t< \ch_\d(\o)\big\}.
\end{equation*}
Further, since $\tilde u_*\le u\le u\vee \phi$ and $G$ is 
non-decreasing in $r$, the function $\phi$ is a $\cP_L$-viscosity subsolution to
\begin{equation}
  \label{eq:subsolphi}
-\pa_t \phi -G(\cd, u\vee \phi, \pa_\o \phi, \pa_{\o\o}^2 \phi)  =  0,\q\mbox{on}\q \big\{\th\colon t< \ch_\d(\o)\big\}.
\end{equation}
Now define
\begin{equation*}
U \coloneqq  ( u\vee \phi )1_{\{t<\ch_\d(\o)\}} + u 1_{\{t\ge\ch_\d(\o)\}}.
\end{equation*}
Recall that $u,\phi$ are both $d_\infty$-u.s.c.\ and that $\ch_\d$ is continuous,
and observe that $u=\phi$ on $\{t=\ch_\d(\o)\}$. So $U$ is $d_\infty$-u.s.c. Further, since $u$ is a $\cP_L$-viscosity subsolution to \eqref{eq:PPDEgeneral} and $\phi$ is a $\cP_L$-viscosity subsolution to \eqref{eq:subsolphi} on $\{t<\ch_\d(\o)\}$, it is easy to verify that  $U$ is a $\cP_L$-viscosity subsolution to \eqref{eq:PPDEgeneral}. Then it follows from Lemma \ref{lem:inftysolenvelop} that the $d$-u.s.c.\ envelop of $U$, namely $U^*$, is a  $\cP_L$-viscosity subsolution to \eqref{eq:PPDEgeneral}.

However, by definition of $\tilde u_*$ there is a sequence $\th_n\in \Th$ such that $d_\infty\big(\th_n,(0,0) \big)\rightarrow 0$ and $\tilde u_*(0,0)=\lim_{n\rightarrow\infty} u(\th_n)$, and thus
\begin{equation*}
\liminf_{n\rightarrow\infty} \Big(U^*(\th_n) -u(\th_n)\Big)
\ge \liminf_{n\rightarrow\infty}  \Big( \phi(\th_n) - u(\th_n)\Big)
\ge \liminf_{n\rightarrow\infty} \Big( \ul\phi(\th_n) -u(\th_n)\Big)
\ge \ul\phi(0,0) - \tilde u_*(0,0) >0.
\end{equation*}
The second last inequality is due to the $d_\infty$-l.s.c.\ of the function $\ul\phi$. Therefore, there is $\th_n$ such that $U^*(\th_n) > u(\th_n)$, which is in contradiction with the definition of $u$.

\ms

\no\underline{\emph{Step 3.}} By
\emph{Step 2} and 
 Lemma \ref{lem:inftysolenvelop}, the $d$-l.s.c.\ envelop of $\tilde u_*$, namely $(\tilde u_*)_*$, is a $\cP_L$-viscosity supersolution to \eqref{eq:PPDEgeneral}. Since $\tilde u_* \le u$, we have $(\tilde u_*)_* \le u_*$. On the other hand, since $\tilde u_*\ge u_*$, we have $(\tilde u_*)_* \ge u_*$. Therefore, $u_* = (\tilde u_*)_*$ is a $\cP_L$-viscosity supersolution to \eqref{eq:PPDEgeneral}.

\ms

\no\underline{\emph{Step 4.}}
Note that $ \ul u_*\le u_*\le \ol v$, in particular, $u_*(T,\cdot)=\xi$. By
 \emph{Step~3}
and  Assumption~\ref{2018-03-16:00}, we have $u \le u_*$. Together with the definition of $u_*$ , we have
that $u=u_*$ is 
$d$-l.s.c.\ and a $\mathcal{P}_L$-viscosity supersolution to
\eqref{eq:PPDEgeneral}.
Then, recalling  what proved in 
\emph{Step~1}, we conclude that $u$
is
a $d$-continuous $\cP_L$-viscosity solution to \eqref{eq:PPDEgeneral}.
\end{proof}

\section[Comparison result for semicontinuous solutions]{Comparison result for  ${\overleftarrow d_p}$-semicontinuous solutions}\label{sec:comparison}

We have seen that for  Perron's method the comparison result for semicontinuous solutions is crucial. However, up to now there is no such result for the fully nonlinear path-dependent PDE in the literature. The main difficulty, as explained in Remark \ref{rem:comparedef}, is due to the optimal stopping problem presented in \eqref{differentoptimization}. As an advantage of our modified definition of $\cP_L$-viscosity solutions, we are able to show such a comparison result by combining
the comparison result for uniformly continuous solutions proved in \cite{Ren2015} and the convolution in backward pseudo-metric developed in \cite{Ren-Perron}.

\smallskip
In the present section we will
deal with
the pseudo-metric $\ola d_p$ on $\Theta$ defined, for $1\leq p<\infty$, by
\begin{equation*}
  \ola d_p (\th,\th') \coloneqq  |t-t'| + |\o_t - \o'_{t'}| +  \Big( \int_0^T  |\o_{t-s} -\o'_{t'- s}|^p ds\Big)^{\frac1p}
\qquad \forall \theta,\theta'\in \Theta,
\end{equation*}
where we
extend $\omega,\omega'$ on the negative real line by
 $\o_s =\o'_s =0$ for  $s<0$. 
Notice that $\ola d_p \ll d_\infty$.

\begin{remark}
Let us 
 we recall the pseudo-metric $d_p$ ($1\le p<\infty$) on $\Th$ defined in \cite{Ren2015}:
\begin{equation*}
  d_p(\th,\th') \coloneqq  |t-t'| + \Big( \int_0^{T+1}  |\o_{t\we s} -\o'_{t'\we s}|^p ds\Big)^{\frac1p} 
\qquad \forall \theta,\theta'\in\Theta,
\end{equation*}
where we 
set
 $\omega_s\coloneqq \omega_t, \omega'_s\coloneqq \omega'_{t'}$ for $s\in[T,T+1]$.
It is not difficult to see that
the two pseudo-metrics $\ola d_p,d_p$ induce the same topology on $\Theta$, i.e., the identity map $(\Theta,\ola d_p)\rightarrow (\Theta,d_p)$ is a homeomorphism.
In particular, the classes of upper-/lower-semicontinous functions with respect to $\ola d_p$ and  $d_p$ coincide.
Neverthless,
the pseudo-metric uniformities associated with $\ola d_p$ and with $d_p$ do not coincide, hence 
uniformly continuous functions with respect to $\ola d_p$ may not be uniformly 
continuous with respect to $d_p$, and viceversa.
\end{remark}



The comparison result in this section will hold under the following strong assumption on the nonlinearity $G$.
 
\begin{assum}\label{assum:Gcomp}
The function $G\colon\Th\times \dbR^m\times \dbS^m \rightarrow \dbR$ satisfies the following assumptions.
\begin{enumerate}[(i)]
  \item The function $\th\mapsto G(\th, \beta,\gamma)$ is $\ola d_p$-uniformly continuous, uniformly in $(\beta,\gamma)$.
  
  \item
 For every $(\theta, \beta)\in
\Theta\times \mathbb{R}^m$,
    \begin{equation*}
      G(\theta,\beta,\gamma)\leq    
   G(\theta,\beta,\gamma')\qquad \forall \gamma,\gamma'\in \mathbb{S}^m,\ \gamma\leq \gamma'.
    \end{equation*}
 
  \item
 For all $\theta\in \Theta$, $\beta,\beta'\in \mathbb{R}^m$, $\gamma,\gamma'\in \mathbb{S}^m$,
    \begin{equation*}      
 |G(\th, \beta+\beta', \gamma+\gamma')-G(\th,\beta,\gamma)| \le L \big(|\beta'| + |\gamma'|\big).
\end{equation*}
  \end{enumerate}
\end{assum} 

\no Note that
if Assumption \ref{assum:Gcomp}
holds true then
 Assumption \ref{2017-02-21:05}
holds true with $d=\ola d_p$.

\begin{thm}\label{thm:comparisonuniform}
Let $G$ satisfy Assumption \ref{assum:Gcomp}. Let $u$ (resp.\ $v$) be a bounded $\ola d_p$-uniformly continuous $\cP_L$-viscosity subsolution  (resp.\ supersolution) to PPDE \eqref{eq:PPDEgeneral}. If $u(T,\cdot) \le v(T,\cdot)$,  we have $u\le v$ on $\Th$.
\end{thm}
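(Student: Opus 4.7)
The plan is to reduce the theorem to the comparison result for $d_p$-uniformly continuous sub-/supersolutions proved in~\cite{Ren2015} (transferred to our modified $\cP_L$-viscosity notion along the recipe indicated in Remark~\ref{rem:comparedef}), by regularizing $u$ and $v$ via a convolution procedure in the backward pseudo-metric $\ola d_p$ as developed in~\cite{Ren-Perron}. The obstacle is that, even though $\ola d_p$ and $d_p$ induce the same topology on $\Th$, $\ola d_p$-uniform continuity is strictly weaker than $d_p$-uniform continuity, so the existing comparison cannot be invoked on the pair $(u,v)$ directly.

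First, I would introduce sup-/inf-convolutions in $\ola d_p$:
\begin{equation*}
u^{\e}(\th) \coloneqq \sup_{\th'\in\Th}\bigl\{ u(\th') - \e^{-1}\,\ola d_p(\th,\th')^2 \bigr\}, \q v_{\e}(\th) \coloneqq \inf_{\th'\in\Th}\bigl\{ v(\th') + \e^{-1}\,\ola d_p(\th,\th')^2 \bigr\}.
\end{equation*}
By boundedness and $\ola d_p$-uniform continuity of $u,v$, one checks that $u^\e \searrow u$ and $v_\e \nearrow v$ uniformly on $\Th$ as $\e\to 0$, and that each approximant is $\ola d_p$-Lipschitz. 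Next, applying the backward convolution of~\cite{Ren-Perron} to $u^\e, v_\e$, I would obtain $d_p$-Lipschitz functions $\tilde u^\e, \tilde v_\e$ that are $\cP_L$-viscosity sub-/supersolutions of perturbed equations
\begin{equation*}
-\pa_t \phi - G^\e(\th,\pa_\o\phi,\pa^2_{\o\o}\phi) = 0,
\end{equation*}
with $G^\e \to G$ uniformly on bounded sets. Here the structural hypotheses collected in Assumption~\ref{assum:Gcomp} (in particular the $\ola d_p$-uniform continuity in $\th$ and the Lipschitz bound in $(\beta,\gamma)$) together with the stability result Theorem~\ref{2018-05-17:00} are what guarantee that the perturbations of both the approximants and the nonlinearity are controlled.

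With $\tilde u^\e, \tilde v_\e$ in hand, I would then apply the comparison result of~\cite{Ren2015} for $d_p$-uniformly continuous sub-/supersolutions to obtain $\tilde u^\e \le \tilde v_\e$ on $\Th$, up to a terminal-time error vanishing as $\e \to 0$; this terminal-time error is controlled by the uniform convergence of the approximants and the assumption $u(T,\cd)\le v(T,\cd)$. Passing to the limit $\e \to 0$ then yields $u \le v$ on $\Th$.

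The hard part will be verifying that the backward convolution preserves the $\cP_L$-viscosity sub-/supersolution property at the cost of only a vanishing perturbation of $G$. This is a nontrivial jet-calculus exercise: one must relate the subjets in $\ul{\mathcal{J}}_L \tilde u^\e(\th)$ to subjets of $u$ at nearby points via the standard doubling-of-variables technique, while simultaneously showing that the randomized optimal stopping operator $\ol{\mathcal{E}}_L$ featuring in our definition transforms controllably under the convolution. The very choice of the backward pseudo-metric $\ola d_p$, which localizes the path distance to the recent past of the trajectory and is compatible with the $\dbF$-adapted structure, is precisely what makes this step feasible.
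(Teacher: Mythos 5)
Your proposal does not match the paper's approach, and it has a genuine gap that makes it unworkable as stated.

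The paper's proof of Theorem~\ref{thm:comparisonuniform} is not a reduction to the $d_p$-comparison of~\cite{Ren2015}: it is a direct re-run of the full argument of~\cite{Ren2015} with $d_p$ replaced by $\ola d_p$ throughout, the point being that the structure of that proof does not use any special feature of $d_p$ beyond properties shared by $\ola d_p$ (in particular $\ola d_p\ll d_\infty$ and the shift-compatibility of the pseudo-metric). By contrast, you try to convert $\ola d_p$-uniformly continuous data to $d_p$-uniformly continuous data and then invoke~\cite{Ren2015} as a black box. This conversion step is where the argument breaks.

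Concretely, there are two problems. First, the backward convolution of~\cite{Ren-Perron} (which is exactly the sup-convolution $\sup_{\th'}\{u(\th')-n\,\ola d_p(\th,\th')\}$ used in Section~\ref{sec:comparison}) produces $\ola d_p$-Lipschitz functions, not $d_p$-Lipschitz functions. As recorded in the remark following the definition of $\ola d_p$, the uniformities generated by $\ola d_p$ and $d_p$ are incomparable: a $\ola d_p$-Lipschitz function need not be $d_p$-uniformly continuous and vice versa. So there is no convolution gadget that upgrades $\ola d_p$-regularity to $d_p$-regularity, and your $\tilde u^\e,\tilde v_\e$ would not be $d_p$-Lipschitz. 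Second, even if $u,v$ could somehow be regularized to $d_p$-uniformly continuous functions, Assumption~\ref{assum:Gcomp}(i) gives only $\ola d_p$-uniform continuity of $\th\mapsto G(\th,\beta,\gamma)$; the comparison of~\cite{Ren2015} requires $d_p$-uniform continuity of the nonlinearity, and there is no approximation of a $\ola d_p$-uniformly continuous $G$ by $d_p$-uniformly continuous $G^\e$ with controlled error, again because the two uniformities do not dominate one another. More broadly, you have imported the strategy of Theorem~\ref{2018-06-18:02} --- where the sup-convolution in $\ola d_p$ is used to reduce the \emph{semicontinuous} comparison to the present \emph{uniformly continuous} one --- into the proof of the very theorem that serves as the base case of that reduction; the uniformly continuous comparison must be established directly (as the paper does by adapting~\cite{Ren2015}), not by another layer of convolution.
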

\begin{proof}
A similar comparison result, in which $u,v,G$ are $d_p$-uniformly continuous, is proved under the old definition of viscosity solution in \cite{Ren2015}. As mentioned in Remark \ref{rem:comparedef}, the change of the definition does not add trouble for proving the existing comparison result. Further, we can indeed apply the  same argument as in \cite{Ren2015} to prove the desired comparison result, where the $d_p$-uniform continuity is replaced by $\ola d_p$-uniform continuity. Since the whole argument is too long, we refer the reader to \cite{Ren2015} for the technical details.
\end{proof}
 
In the rest of this section, we 
 show a comparison result for $\ola d_p$-semicontinuous 
 solutions. 
 The main idea is to approximate the semicontinuous solutions by uniform continuous functions with the following convolution.
For a bounded $\ola d_p$-u.s.c.\ function $u$, we define
\begin{equation}
  \label{eq:supconvolution}
u^n(\th) \coloneqq  \sup_{\th'\in\Th} \Big\{ u(\th') -n \ola d_p(\th,\th') \Big\}.
\end{equation}
Then $u^n$ is a bounded $\ola d_p$-Lipschitz function and $u^n\rightarrow u$ 
pointwise
 as $n\rightarrow \infty$. 

\begin{assum}\label{assum:terminalLip}
For a function $u\colon\Th\rightarrow\dbR$, there is a constant $C_0$ such that
\begin{equation}\label{2018-05-30:02}
  \lim_{\d\rightarrow 0} \sup \left\{\frac{\big| u(\th')-u(T,\o)\big|}{\ola d_p\big(\th', (T,\o)\big)}\colon
\theta'\in \Theta,\ \omega\in \Omega,\ 
\ola d_p\big(\th', (T,\o)\big)\le \d\right\}  \le  C_0,
\end{equation}
where we adopt the convention $\frac00=0$.
\end{assum}

\begin{example}
Let $\ul u, \ol v$ be two $\ola d_p$-Lipschitz functions with Lipschitz constant $C$ and assume that $\ul u(T,\cdot)=\ol v(T,\cdot)$ and $\ul u\le \ol v$ on $\Th$. Then all the functions $u$, such that $\ul u\le u\le \ol v$, satisfy Assumption \ref{assum:terminalLip}.
\end{example}

\begin{lem}\label{lem:unterminalconsistent}
Let $u$ be a bounded $\ola d_p$-u.s.c.\ function satisfing Assumption \ref{assum:terminalLip}. Then we have $u^n(T,\cdot) = u(T,\cdot)$ with $u^n$  defined in \eqref{eq:supconvolution} for $n$ big enough.
\end{lem}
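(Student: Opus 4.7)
The plan is to establish the two inequalities $u^n(T,\omega)\ge u(T,\omega)$ and $u^n(T,\omega)\le u(T,\omega)$ separately. The first is immediate from the definition \eqref{eq:supconvolution}: taking $\theta'=(T,\omega)$ as a candidate in the supremum gives $u^n(T,\omega)\ge u(T,\omega)-n\ola d_p((T,\omega),(T,\omega))=u(T,\omega)$.

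For the reverse inequality, I need to show that for $n$ large enough, uniformly in $\omega$,
\begin{equation*}
u(\theta')-u(T,\omega)\le n\,\ola d_p(\theta',(T,\omega))\qquad\forall \theta'\in\Theta.
\end{equation*}
The strategy is to split $\Theta$ into a ``near'' and a ``far'' region with respect to the point $(T,\omega)$. In the near region, I would invoke Assumption~\ref{assum:terminalLip}: it guarantees the existence of $\delta_0>0$ such that for every $\omega\in\Omega$ and every $\theta'\in\Theta$ with $\ola d_p(\theta',(T,\omega))\le \delta_0$,
\begin{equation*}
|u(\theta')-u(T,\omega)|\le (C_0+1)\,\ola d_p(\theta',(T,\omega)),
\end{equation*}
so any $n\ge C_0+1$ suffices there.

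In the far region, where $\ola d_p(\theta',(T,\omega))>\delta_0$, I would use the boundedness hypothesis: if $M$ is a uniform bound for $|u|$, then
\begin{equation*}
u(\theta')-u(T,\omega)\le 2M\le \frac{2M}{\delta_0}\,\ola d_p(\theta',(T,\omega)),
\end{equation*}
so $n\ge 2M/\delta_0$ suffices there. Choosing $n\ge \max(C_0+1,\,2M/\delta_0)$, both bounds hold simultaneously for every $\theta'\in\Theta$, which proves $u^n(T,\omega)\le u(T,\omega)$. Since the threshold on $n$ is independent of $\omega$, the equality $u^n(T,\cdot)=u(T,\cdot)$ holds for all $n$ sufficiently large. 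There is no substantive obstacle in this argument; it is a routine near/far decomposition, with Assumption~\ref{assum:terminalLip} providing exactly the Lipschitz-type control at the terminal time that is needed.
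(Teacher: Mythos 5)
Your proof is correct and follows essentially the same route as the paper's: both use the trivial inequality $u^n(T,\cdot)\ge u(T,\cdot)$, then for the reverse pick $\delta$ so that the supremum in Assumption~\ref{assum:terminalLip} is below $C_0+1$, and take $n\ge (C_0+1)\vee 2|u|_\infty/\delta$ to cover the near and far regions simultaneously. The near/far decomposition you spell out is exactly what underlies the paper's single displayed inequality.
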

\begin{proof}
Clearly $u^n(T,\cdot)\geq u(T,\cdot)$ for all $n$.
Now let $\delta>0$ be small enough such that the supremum appearing in 
\eqref{2018-05-30:02} is less than $C_0+1$.
Let $n\geq (C_0+1)\vee \frac{2|u|_\infty}{\delta}$.
Then
  \begin{equation*}
    u(T,\omega)=
    u(T,\omega)
    -u(\theta')
    +u(\theta')
    \geq -n\ola d_p(\theta',(T,\omega))+u(\theta')
\qquad 
\forall \theta'\in \Theta, \ \omega\in \Omega,
  \end{equation*}
or, equivalently,
$  u(T,\cdot)\geq u^n(T,\cdot)$.
\end{proof}

The main advantage of the convolution in \eqref{eq:supconvolution} is that $u^n$ inherits the viscosity subsolution property of $u$. We first prove a lemma, which is an adaptation of the important Lemma 3.3 of \cite{Ren2015} to the new definition of $\cP_L$-viscosity solution. Note that in Lemma 3.3 of \cite{Ren2015} the result holds true only for uniformly continuous functions, while the following lemma is proved for u.s.c.\ functions. This improvement is due to our new definition.

\begin{lem}\label{lem:OS}
Let $u$ be a $\ola d_p$-u.s.c.\ function satisfying $u(0,0)>\ol\cE_L \big[(u - \varphi^{\a,\b,\g})(\ch_\d,\B) \big]$, for some $\d>0$ and $(\a,\b,\g)\in\dbR\times\dbR^m\times\dbS^m$. 
Then, there exists $\th^*\in \Theta$ such that
\begin{equation}\label{2018-05-30:05}
   t^*<\ch_\d(\o^*)
 \q\mbox{and}\q
 (\a,\b + \g\o^*_{t^*},\g) \in  \underline{\cJ}_L u(\th^*). 
\end{equation}
\end{lem}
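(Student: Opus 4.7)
The plan is to reformulate the problem in terms of $\psi:=u-\varphi^{\alpha,\beta,\gamma}$, which is $d_\infty$-upper semicontinuous and bounded above on the relevant region. The key algebraic identity
\[
\varphi^{\alpha,\beta,\gamma}\big((t^*+s)\wedge T,\omega^*\otimes_{t^*}\omega'\big)=\varphi^{\alpha,\beta,\gamma}(\theta^*)+\varphi^{\alpha,\beta+\gamma\omega^*_{t^*},\gamma}(s,\omega'),
\]
valid for $t^*+s\le T$, shows that the target inclusion $(\alpha,\beta+\gamma\omega^*_{t^*},\gamma)\in\underline{\cJ}_L u(\theta^*)$ is equivalent to finding $\delta^*>0$ with $\psi(\theta^*)=\ol\cE_L[\psi^{\theta^*}(\T\wedge\ch_{\delta^*},\B)]$. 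Testing with the Dirac mass at the origin of $\tilde\Theta$, which lies in $\cP_L$, gives $\ol\cE_L[\psi^\theta(\T\wedge\ch_{\delta'},\B)]\ge\psi(\theta)$ always, so only the ``$\ge$''-direction needs to be produced. Similarly the hypothesis rewrites as $\psi(0,0)>\ol\cE_L[\psi(\ch_\delta,\B)]$, and by modifying any $\dbP\in\cP_L$ so that $\T=\ch_\delta(\B)$ almost surely one also obtains $\ol\cE_L[\psi(\T\wedge\ch_\delta,\B)]\ge\ol\cE_L[\psi(\ch_\delta,\B)]$.

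Two cases will arise. If $\ol\cE_L[\psi(\T\wedge\ch_\delta,\B)]=\psi(0,0)$, then $(\alpha,\beta,\gamma)\in\underline{\cJ}_L u(0,0)$ and the choice $\theta^*:=(0,0)$ concludes. Otherwise, the strict chain $\ol\cE_L[\psi(\T\wedge\ch_\delta,\B)]>\psi(0,0)>\ol\cE_L[\psi(\ch_\delta,\B)]$ holds. By compactness of $\cP_L$ (Proposition~\ref{2017-07-18:26}) and upper semicontinuity of the integrand, there exists a maximizer $\dbP^*\in\cP_L$ of $\ol\cE_L[\psi(\T\wedge\ch_\delta,\B)]$, and the chain then forces $\dbE^{\dbP^*}[\psi(\T\wedge\ch_\delta,\B)]>\dbE^{\dbP^*}[\psi(\ch_\delta,\B)]$, whence $\dbP^*[\T<\ch_\delta(\B)]>0$. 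I then apply the almost-sure identity from Proposition~\ref{prop:dpp} to $f(\theta):=\psi(t\wedge\ch_\delta(\omega),\omega)$, pick $\vartheta^*$ on the event $\{\T<\ch_\delta(\B)\}$ where $f(\T,\B)=\ol\cE^{\T}_L[f^{\T,\B}]$, and set $\theta^*:=(t^*,\omega^*):=(\T(\vartheta^*),\B(\vartheta^*))$ together with $\delta^*:=\min\{\delta-t^*-|\omega^*_{t^*\wedge\cdot}|_\infty,\,T-t^*\}>0$.

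The main obstacle will be converting the restricted sublinear expectation $\ol\cE^{\T}_L$ (taken over $\cP_L(\T,\vartheta^*)$) appearing in the DPP identity into the unrestricted $\ol\cE_L$ required by the definition of the subjet. I would handle this via two observations. First, a direct computation from the definition of $\ch_\delta$ yields $\ch_\delta(\omega^*\otimes_{t^*}\omega')-t^*\ge\ch_{\delta^*}(\omega')$ for every $\omega'\in\Omega$, hence $f^{\T,\B}(t',\omega')$ at $\vartheta^*$ coincides with $\psi^{\theta^*}(t',\omega')$ whenever $t'\le\ch_{\delta^*}(\omega')$. Second, for any $\dbP\in\cP_L$ the modified measure $\dbP'$ obtained by replacing $\T$ with $\T\wedge\ch_{\delta^*}(\B)$ still lies in $\cP_L$, yields the same value of the integrand $\psi^{\theta^*}(\T\wedge\ch_{\delta^*},\B)$, and belongs to $\cP_L(\T,\vartheta^*)$ since $\T\wedge\ch_{\delta^*}\le\delta^*\le T-t^*$. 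Combining these two facts with the DPP identity at $\vartheta^*$ gives $\ol\cE_L[\psi^{\theta^*}(\T\wedge\ch_{\delta^*},\B)]\le\ol\cE^{\T}_L[f^{\T,\B}](\vartheta^*)=\psi(\theta^*)$, closing the reverse inequality and completing the proof.
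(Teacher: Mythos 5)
Your proof is correct and takes essentially the same route as the paper's: set $U=u-\varphi^{\alpha,\beta,\gamma}$, extract an optimizer $\dbP^*\in\cP_L$ by compactness and upper semicontinuity, apply the dynamic programming identity of Proposition~\ref{prop:dpp}, deduce $\dbP^*[\T<\ch_\delta]>0$ from the hypothesis, and select $\vartheta^*$ on the resulting positive-probability event. The extra detail you supply in the final step --- constructing $\delta^*$, using $\ch_\delta(\omega^*\otimes_{t^*}\cdot)-t^*\ge\ch_{\delta^*}(\cdot)$, and modifying $\T\mapsto\T\wedge\ch_{\delta^*}(\B)$ to pass from the restricted expectation $\ol\cE^\T_L$ to the unrestricted $\ol\cE_L$ required by the jet definition --- is precisely what the paper compresses into ``which is equivalent to \eqref{2018-05-30:05}''.
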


\begin{proof}
Let
 $U \coloneqq  u - \varphi^{\a,\b,\g}$ and
  define the value function $V\colon \Theta\rightarrow \mathbb{R}$ by
  \begin{equation*}
    V(\th) \coloneqq   \ol\cE_L^t \big[ U^\th (\T\we\ch_\d,\B) \big](\vartheta),
  \end{equation*}
where $\vartheta=(t,\omega,a,\mu,q)\in \widetilde\Theta$ is such that $\theta=(t,\omega)$. Notice that the definition of $V(\theta)$ does not depend on the representative $\vartheta$.
By compactness of $\mathcal{P}_L$
and by upper semicontinuity of the map
\begin{equation*}
  \mathcal{P}_L\rightarrow \mathbb{R},\ \mathbb{P} \mapsto
  \mathbb{E}^\mathbb{P}
  \left[ U(\T\wedge \ch_\delta,\B) \right] 
\end{equation*}
we can find
 $\dbP^*\in \cP_L$ such that $V(0,0) = \dbE^{\dbP^*}[U(\T\we\ch_\d,\B)]$. By Proposition \ref{prop:dpp} we have
  \begin{equation}\label{2018-05-30:04}
    U = V \q\dbP^*\mbox{-a.s.}
  \end{equation}
  On the other hand, by 
the assumption of the lemma, we have 
  \begin{equation*}
    \dbE^{\dbP^*}[U(\T\we\ch_\d,\B)] 
 =  V(0,0)  \ge  U(0,0)=u(0,0) 
> \ol\cE_L \big[ U (\ch_\d,\B) \big]
\ge \dbE^{\dbP^*}[U(\ch_\d,\B)] . 
\end{equation*}
Therefore,
\begin{equation}
  \label{2018-05-30:03}
  \dbP^*[\T<\ch_\d]>0.
\end{equation}
By taking into account
\eqref{2018-05-30:04}
and
\eqref{2018-05-30:03},
we conclude that there exists
$\vartheta^*\in \widetilde\Theta$
such that
$t^*  < \ch_\d(\o^*)$ and $U(\th^*)= V(\th^*)$,
which is equivalent
to
\eqref{2018-05-30:05}.
\end{proof}

  If $G$ 
  satisfies Assumption~\ref{assum:Gcomp}, 
  we define the modulus of continuity $\rho_G$ by
  \begin{equation*}
    \rho_G(x)\coloneqq \sup_{\beta\in \mathbb{R}^m, \gamma\in \mathbb{S}^m}
\sup_{\substack{\theta,\theta'\in \Theta\\\ola d_p(\theta,\theta')\leq x}}|G(\theta,\beta,\gamma)-G(\theta',\beta,\gamma)|,\qquad \forall x\geq 0.
  \end{equation*}

\begin{prop}\label{prop:unsubsolution}
Let $u$ be 
a
 $\ola d_p$-u.s.c.\ $\cP_L$-viscosity subsolution to \eqref{eq:PPDEgeneral},
bounded by a constant $C>0$,
and let
$u^n$
be defined as in \eqref{eq:supconvolution}.
Assume that
 $G$ satisfies Assumption~\ref{assum:Gcomp}.
 Then, for $n$ big enough,
$u^n$
is a $\cP_L$-viscosity subsolution to the following equation:
\begin{equation*}
  -\pa_t u^n - G(\th, \pa_\o u^n, \pa_{\o\o}^2 u^n)  \le  \rho_G\left(\frac{2C+1}{n}\right).
\end{equation*}
\end{prop}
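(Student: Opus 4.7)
The strategy is the classical sup-convolution technique adapted to the PPDE setting with the backward metric $\ola d_p$: we transfer the viscosity subsolution property of $u$ at a near-maximizer $\theta^*$ back to $u^n$ at $\theta_0$, at the cost of a small error controlled by $\rho_G$.

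Fix $\theta_0=(t_0,\omega_0)\in\Theta$ with $t_0<T$ and let $(\a,\b,\g)\in\ul\cJ_L u^n(\theta_0)$, so that for some $\d>0$
\begin{equation*}
u^n(\theta_0) \;=\; \ol\cE_L\bigl[\bigl((u^n)^{\theta_0}-\varphi^{\a,\b,\g}\bigr)(\T\we\ch_\d,\B)\bigr].
\end{equation*}
By definition of the sup-convolution, choose $\theta^*=(t^*,\omega^*)\in\Theta$ such that $u(\theta^*)-n\ola d_p(\theta_0,\theta^*)\ge u^n(\theta_0)-n^{-2}$. Using $|u|_\infty\le C$ together with the trivial bound $u^n(\theta_0)\ge u(\theta_0)\ge -C$, one deduces
\begin{equation*}
n\,\ola d_p(\theta_0,\theta^*)\;\le\;u(\theta^*)-u^n(\theta_0)+n^{-2}\;\le\;2C+1
\end{equation*}
for $n$ large enough, so that $\ola d_p(\theta_0,\theta^*)\le (2C+1)/n$.

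The main step is to transfer the jet from $u^n$ at $\theta_0$ to $u$ at $\theta^*$. For $\theta=(t_0+s,\omega_0\otimes_{t_0}\omega')$ introduce the path-shift $S\theta\coloneqq (t^*+s,\omega^*\otimes_{t^*}\omega')$. A direct computation on the integral defining $\ola d_p$ (splitting the domain of integration at $r=s$ and using $\omega_s=\omega'_s=0$ for $s<0$) yields the comparison $\ola d_p(\theta,S\theta)\le (1+\d^{1/p})\,\ola d_p(\theta_0,\theta^*)$ whenever $s\le \d$. Crucially, $\varphi^{\a,\b,\g}(\T\we\ch_\d,\B)$ depends only on the increment $\B$ and not on the underlying base path, hence it is invariant under $S$. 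Combining the definition of $u^n$ in the form $u^n(\theta)\ge u(S\theta)-n\,\ola d_p(\theta,S\theta)$ with the identity defining the jet at $\theta_0$, and noting that $\ol\cE_L[(u^{\theta^*}-\varphi^{\a,\b,\g})(\T\we\ch_\d,\B)]\ge u(\theta^*)$ trivially by taking $\dbP$ concentrated on $\{\T=0\}$, one obtains
\begin{equation*}
u(\theta^*)\;\le\;\ol\cE_L\bigl[(u^{\theta^*}-\varphi^{\a,\b,\g})(\T\we\ch_\d,\B)\bigr]\;\le\;u(\theta^*)+\d^{1/p}(2C+1)+n^{-2}.
\end{equation*}
A small upward perturbation of $\a$ absorbs the vanishing error (as $\d\to 0$), producing a genuine element $(\a+\eta,\b,\g)\in\ul\cJ_L u(\theta^*)$ with $\eta$ arbitrarily small. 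The $\cP_L$-viscosity subsolution property of $u$ at $\theta^*$ then gives $-\a-\eta-G(\theta^*,\b,\g)\le 0$, and Assumption~\ref{assum:Gcomp}(i) together with the bound $\ola d_p(\theta_0,\theta^*)\le (2C+1)/n$ yields $|G(\theta_0,\b,\g)-G(\theta^*,\b,\g)|\le \rho_G((2C+1)/n)$. Letting $\eta\to 0$ and combining the two inequalities produces $-\a-G(\theta_0,\b,\g)\le \rho_G((2C+1)/n)$, as claimed.

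The delicate point is the jet transfer in the third paragraph: because the $\ol\cE_L$-tangency is a softer condition than pointwise tangency, one cannot simply invoke a translation invariance of $\ola d_p$ (which fails), but must track the small slack $\d^{1/p}(2C+1)+n^{-2}$ and absorb it into an arbitrarily small perturbation of $\a$ before passing to the subsolution inequality. This is where the specific structure of $\ola d_p$ (depending only on past values) and the invariance of $\varphi^{\a,\b,\g}$ under base-path shifts play the decisive role.
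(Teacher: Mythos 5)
Your overall strategy (sup-convolution, jet transfer, continuity of $G$) matches the paper's, and your careful bound $\ola d_p(\theta,S\theta)\le(1+\d^{1/p})\ola d_p(\theta_0,\theta^*)$ is actually \emph{more} precise than what the paper states at the analogous point (the paper asserts $\ola d_p(\th,\th^*) = \ola d_p((t+\ch_\d,\o\otimes_t\B),(t^*+\ch_\d,\o^*\otimes_{t^*}\B))$ as an exact identity, whereas the change of variable $r\mapsto r-\ch_\d$ leaves an extra term $\ch_\d|\o_t-\o^*_{t^*}|^p$ inside the $p$-th power, consistent with your factor). However, your final step has a genuine gap.

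You conclude from the sandwich
\begin{equation*}
u(\theta^*)\;\le\;\ol\cE_L\bigl[(u^{\theta^*}-\varphi^{\a,\b,\g})(\T\we\ch_\d,\B)\bigr]\;\le\;u(\theta^*)+\d^{1/p}(2C+1)+n^{-2}
\end{equation*}
that a small upward perturbation of $\a$ yields $(\a+\eta,\b,\g)\in\ul\cJ_L u(\theta^*)$. This does not follow. Replacing $\a$ by $\a+\eta$ only subtracts $\eta(\T\we\ch_\d)\ge 0$ inside the expectation, and the Dirac at $\vartheta=0$ (which is in $\cP_L$) still forces
$\ol\cE_L\bigl[(u^{\theta^*}-\varphi^{\a+\eta,\b,\g})(\T\we\ch_\d,\B)\bigr]\ge u(\theta^*)$
for every $\eta\ge 0$ and every $\d>0$, so the map $\eta\mapsto\ol\cE_L[\cdots]$ is $\ge u(\theta^*)$, nonincreasing, with limit $u(\theta^*)$ as $\eta\to\infty$, but there is no reason for the value $u(\theta^*)$ to be attained at any finite $\eta$. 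Shrinking $\d$ does not help either, since the $n^{-2}$ error coming from the near-maximizer $\theta^*$ persists (and you cannot take an exact maximizer because $u$ is only u.s.c.\ and $\Theta$ is not compact). So no element of $\ul\cJ_L u(\theta^*)$ is produced.

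The paper resolves exactly this difficulty via Lemma~\ref{lem:OS}: first one upgrades the jet identity for $u^n$ at $\theta_0$ to a \emph{strict} inequality $u^n(\theta_0)>\ol\cE_L\bigl[((u^n)^{\theta_0}-\varphi^{\a+\e,\b,\g})(\ch_\d,\B)\bigr]$ (note the pure stopping time $\ch_\d$, not $\T\we\ch_\d$, and the extra $\e>0$), transfers it to $u(\theta^*)>\ol\cE_L\bigl[(u^{\theta^*}-\varphi^{\a+\e,\b,\g})(\ch_\d,\B)\bigr]$, and then Lemma~\ref{lem:OS} delivers a point $\tilde\theta$ with $\tilde t<\ch_\d(\tilde\o)$ and $(\a+\e,\b+\g\tilde\o_{\tilde t},\g)\in\ul\cJ_L u\bigl(t^*+\tilde t,\o^*\otimes_{t^*}\tilde\o\bigr)$. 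The jet does not sit at $\theta^*$, the $\beta$-slot acquires the shift $\g\tilde\o_{\tilde t}$, and the subsolution inequality is invoked at this nearby point; the final estimate \eqref{2018-05-30:07} controls $\ola d_p$ between $\theta_0$ and this point, after which $\d,\e\to 0$. This randomized-optimal-stopping mechanism is the missing ingredient in your step 4: you should cite Lemma~\ref{lem:OS} (together with Proposition~\ref{prop:dpp}), rather than claim a jet at $\theta^*$ itself.
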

\begin{proof}
Let $(\a,\b,\g)\in \underline{\cJ}_L u^n(\th)$. Then for any $\e>0$ we have
\begin{equation}\label{2018-05-30:06}
  u^n(\th) > \ol\cE_L\Big[\big((u^n)^\th-\f^{\a+\e,\b,\g}\big)(\ch_\d, \B)\Big],
\end{equation}
for a suitably $\delta>0$ arbitrarily small.
By definition of $u^n$ and by \eqref{2018-05-30:06},
we can find $\th^*\in \Theta$ such that
\begin{equation*}
  \ola d_p(\th,\th^*)<\frac{2C+1}{n}
\q\mbox{and}\q
u(\th^*) - n \ola d_p(\th,\th^*) > \ol\cE_L\Big[\big((u^n)^\th-\f^{\a+\e,\b,\g}\big)(\ch_\d, \B)\Big].
\end{equation*}
Further, we have
\begin{equation*}
  \begin{split}
      u(\th^*) - n \ola d_p(\th,\th^*) &> \ol\cE_L\Big[\big((u^n)^\th-\f^{\a+\e,\b,\g}\big)(\ch_\d, \B)\Big]\\
&\ge   \ol\cE_L\Big[\big(u^{\th^*}-\f^{\a+\e,\b,\g}\big)(\ch_\d, \B) -n \ola d_p\big((t+\ch_\d, \o\otimes_t \B),(t^*+\ch_\d, \o^*\otimes_{t^*}\B)\big) \Big].
\end{split}
\end{equation*}
It is important to note that
\begin{equation*}
  \ola d_p(\th,\th^*) = \ola d_p\big((t+\ch_\d, \o\otimes_t \B),(t^*+\ch_\d, \o^*\otimes_{t^*}\B)\big).
\end{equation*}
Therefore,
\begin{equation*}
  u(\th^*) > \ol\cE_L \Big[\big(u^{\th^*}-\f^{\a+\e,\b,\g}\big)(\ch_\d, \B) \Big].
\end{equation*}
Now we apply Lemma \ref{lem:OS} and obtain that there exists $\tilde\th$ such that 
\begin{equation*}
   \tilde t<\ch_\d(\tilde\o)
 \q\mbox{and}\q
 (\a+\e,\b + \g\tilde\o_{\tilde t},\g) \in \underline{\cJ}_L u\big(t^*+\tilde t, \o^*\otimes_{t^*} \tilde\o\big),
\end{equation*}
and thus, by the subsolution property of $u$,
\begin{equation}
  \label{eq:ppdeestimate}
-\a -\e -G\big(t^*+\tilde t, \o^*\otimes_{t^*} \tilde\o,\b + \g\tilde\o_{\tilde t},\g \big) \le   0.
\end{equation}
Note that
\begin{equation}\label{2018-05-30:07}
  \begin{split}
      \ola d_p\big(\th, (t^*+\tilde t, \o^*\otimes_{t^*} \tilde\o) \big)
& \le 
\ola d_p\big((t+\tilde t, \o\otimes_t \tilde\o), (t^*+\tilde t, \o^*\otimes_{t^*} \tilde\o) \big)
+ \ola d_p\big(\th, (t+\tilde t, \o\otimes_{t} \tilde\o) \big)\\
& \le  \ola d_p(\th, \th^*) + 
\delta+
\left(  \d^{p+1} + T \big|\rho(\o_{t\we\cd},\d)\big|^p \right)^\frac1p
\\
& \le  \frac{2C+1}{n} + 
  \delta+
\left( \d^{p+1} + T \big|\rho(\o_{t\we\cd},\d)\big|^p \right)^\frac1p . 
\end{split}
\end{equation}
where $\rho(\o_{t\we\cd},\cd)$ is the modulus of continuity of the path $\omega_{t\wedge \cdot}$.
By using the definition of $\rho_G$
together with 
 \eqref{eq:ppdeestimate}
and
\eqref{2018-05-30:07},
we get
\begin{equation*}
  -\a -\e -G\left(\th,\b + \g\tilde\o_{\tilde t},\g \right) \le   \rho_G\left(\frac{2C+1}{n}+
\delta+
 \left( \d^{p+1} + T \right|\rho(\o_{t\we\cd},\d)\left|^p \right)^\frac1p \right).
\end{equation*}
Finally, 
we let $\d,\e$ tend to $0$
and obtain
$-\a -G(\th, \b,\g)  \le  \rho_G\left(\frac{2C+1}{n}\right)$.
\end{proof}

Now we are ready to prove the main result of this section.

\begin{thm}\label{2018-06-18:02}
Let $G$ satisfy Assumption~\ref{assum:Gcomp}. Let $u$ (resp.\ $v$) be a bounded $\ola d_p$-u.s.c.\ (resp.\ l.s.c.) $\cP_L$-viscosity subsolution  (resp.\ supersolution) to  PPDE \eqref{eq:PPDEgeneral}. In addition, assume that $u,v$ satisfy Assumption \ref{assum:terminalLip}. Then, if $u(T,\cdot) \le v(T,\cdot)$,  we have $u\le v$ on $\Th$.
\end{thm}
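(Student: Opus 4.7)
The plan is to reduce the semicontinuous comparison to the uniformly continuous one of Theorem~\ref{thm:comparisonuniform} via sup- and inf-convolutions in the backward metric $\ola d_p$. Define, for $n$ large,
\begin{equation*}
u^n(\theta)\coloneqq \sup_{\theta'\in\Theta}\bigl\{u(\theta')-n\ola d_p(\theta,\theta')\bigr\},
\qquad
v_n(\theta)\coloneqq \inf_{\theta'\in\Theta}\bigl\{v(\theta')+n\ola d_p(\theta,\theta')\bigr\}.
\end{equation*}
Both functions are bounded and $n$-Lipschitz with respect to $\ola d_p$ (hence $\ola d_p$-uniformly continuous), and they converge monotonically and pointwise to $u$ and $v$, respectively. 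Since $u,v$ satisfy Assumption~\ref{assum:terminalLip}, Lemma~\ref{lem:unterminalconsistent} (applied to $u$ and symmetrically to $v$) yields $u^n(T,\cdot)=u(T,\cdot)$ and $v_n(T,\cdot)=v(T,\cdot)$ for $n$ large enough, so that $u^n(T,\cdot)\le v_n(T,\cdot)$.

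By Proposition~\ref{prop:unsubsolution}, $u^n$ is a $\cP_L$-viscosity subsolution of
\begin{equation*}
-\partial_t u^n - G(\theta,\partial_\omega u^n,\partial^2_{\omega\omega}u^n)\le \rho_G\!\left(\tfrac{2C+1}{n}\right)\eqqcolon\eta_n,
\end{equation*}
and a completely symmetric argument (applied to $-v$, which is $\ola d_p$-u.s.c.\ and a $\cP_L$-viscosity subsolution of $-\partial_t(-v)-\widehat G(\theta,\partial_\omega (-v),\partial^2_{\omega\omega}(-v))=0$ with $\widehat G(\theta,\beta,\gamma)\coloneqq -G(\theta,-\beta,-\gamma)$, which satisfies the same Assumption~\ref{assum:Gcomp}) shows that $v_n$ is a $\cP_L$-viscosity supersolution of
\begin{equation*}
-\partial_t v_n - G(\theta,\partial_\omega v_n,\partial^2_{\omega\omega}v_n)\ge -\eta_n.
\end{equation*}

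The crucial observation is that under Assumption~\ref{assum:Gcomp} the nonlinearity $G$ does not depend on the unknown $r$, so the constant right-hand sides $\pm\eta_n$ can be absorbed by a linear-in-time shift. Set
\begin{equation*}
\tilde u^n(t,\omega)\coloneqq u^n(t,\omega)-\eta_n(T-t),\qquad \tilde v_n(t,\omega)\coloneqq v_n(t,\omega)+\eta_n(T-t).
\end{equation*}
Then $\tilde u^n$ and $\tilde v_n$ are bounded and $\ola d_p$-uniformly continuous, and a direct check using the jets in Definition~\ref{def:viscoppde} (the time-derivative component $\alpha$ shifts by $\pm\eta_n$ while $\beta,\gamma$ are unchanged) shows that $\tilde u^n$ is a $\cP_L$-viscosity subsolution and $\tilde v_n$ a $\cP_L$-viscosity supersolution of the unperturbed PPDE~\eqref{eq:PPDEgeneral}. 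Moreover $\tilde u^n(T,\cdot)=u^n(T,\cdot)\le v_n(T,\cdot)=\tilde v_n(T,\cdot)$.

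Applying Theorem~\ref{thm:comparisonuniform} to $\tilde u^n$ and $\tilde v_n$ gives $\tilde u^n\le \tilde v_n$ on $\Theta$, i.e.
\begin{equation*}
u^n(t,\omega)-v_n(t,\omega)\le 2\eta_n(T-t)\qquad\forall (t,\omega)\in\Theta.
\end{equation*}
Letting $n\to\infty$, we have $\eta_n=\rho_G\!\left(\tfrac{2C+1}{n}\right)\to 0$ by the continuity modulus $\rho_G$, and the pointwise monotone limits give $u^n(\theta)\to u(\theta)$, $v_n(\theta)\to v(\theta)$, whence $u\le v$ on $\Theta$. The main technical hurdle here is precisely the compatibility of the sup-convolution with the randomized-time jets, which is exactly what Proposition~\ref{prop:unsubsolution} provides (and which in the old framework of \cite{EKTZ} was unavailable for merely u.s.c.\ functions); everything else is bookkeeping of the linear time shift and a final limiting argument.
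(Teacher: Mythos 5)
Your proof is correct and takes essentially the same route as the paper's: sup/inf-convolution in $\ola d_p$, Proposition~\ref{prop:unsubsolution}, a linear-in-time shift by $\rho_G\!\left(\tfrac{2C+1}{n}\right)(T-t)$ to absorb the approximation error, Lemma~\ref{lem:unterminalconsistent} for the terminal data, Theorem~\ref{thm:comparisonuniform}, and a pointwise limit. The extra details you supply (the symmetric argument for $v_n$ via $\widehat G(\theta,\beta,\gamma)=-G(\theta,-\beta,-\gamma)$, and the explicit bound $u^n-v_n\le 2\eta_n(T-t)$) are precisely the computations the paper leaves implicit.
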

\begin{proof}
As a direct consequence of Proposition \ref{prop:unsubsolution}, we know that,
for $n$
sufficiently large,
\begin{equation*}
    \tilde u^n(\th) \coloneqq  u^n(\th) - \rho_G\left(\frac{2C+1}{n}\right)\left(T-t\right)
  \end{equation*}
  is a bounded $\ola d_p$-uniformly continuous $\cP_L$-viscosity subsolution to 
 PPDE \eqref{eq:PPDEgeneral}
and $\tilde u^n(T,\cdot) = u^n(T,\cdot)$. Further, by Lemma \ref{lem:unterminalconsistent}, we have $u^n(T,\cdot) =u(T,\cdot)$ and thus $\tilde u^n(T,\cdot) = u(T,\cdot)$ for $n$ big enough. We can similarly define $\tilde v^n$, so that 
$ \tilde v^n\rightarrow v$ pointwise
and that
$\tilde v^n$ is a bounded $\ola d_p$-uniformly continuous $\cP_L$-viscosity supersolution to \eqref{eq:PPDEgeneral} and $\tilde v^n(T,\cdot) =v(T,\cdot)$ for $n$ big enough. Since $\tilde v^n(T,\cdot) =v(T,\cdot)\ge u(T,\cdot) = \tilde u^n(T,\cdot)$, by Theorem~\ref{thm:comparisonuniform} we have
\begin{equation*}
  u = \lim_{n\rightarrow\infty} \tilde u^n \le \lim_{n\rightarrow\infty} \tilde v^n  = v,
\end{equation*}
and the proof is complete.
\end{proof}



\section{Representation 
of PPDEs as PDEs
in infinite dimension and comparison under weak-continuity}\label{sec:hilbert}

The aim of this section is to start with a $\mathcal{P}_L$-viscosity (sub-/super-)solution $u$ defined on the space $\Theta$, then to associate to it a function $\tilde{ u}$ defined on a product space $[0,T]\times H$ where $H$ is a suitably chosen Hilbert space, and finally show that $\tilde{u}$ is a viscosity (sub-/super-)solution of a PDE on $[0,T]\times H$.
As a corollary of such relationship, we can exploit the comparison theorem available for viscosity solutions in Hilbert spaces  to obtain uniqueness for $\mathcal{P}_L$-viscosity solutions.

We start by introducing the Hilbert space $H$ on which we will set our new PDE associated with the original PPDE
\eqref{eq:PPDEgeneral}.
Then we will address the problem of associating a function  on the original space $\Theta$ with a function on the product space $[0,T]\times H$.
In order to perform this change of variable in such a way to end up with a function regular enough to exploit the comparison theorem for viscosity solutions in Hilbert spaces, we need to  introduce  a pseudo-metric $d_B$ on $\Theta$, weaker than $\ola d_p$, and an associated norm $|\cdot|_B$ on $H$, weaker than the original norm $|\cdot|_H$.

Once provided these preliminaries,
we can introduce the PDE on $[0,T]\times H$ associated with the original PPDE on $\Theta$,
 recall the notion of viscosity solution in Hilbert spaces,
and prove the main theorem of this section
(Theorem~\ref{2017-01-31:05}),
thanks to which we can use 
\cite[Theorem~3.50]{Fabbri} to get uniqueness of $\mathcal{P}_L$-viscosity solutions (Corollary~\ref{2018-03-15:00}).

 For the theory of viscosity solutions in Hilbert spaces 
 we always refer to \cite{Fabbri}.
For the basic notions of stochastic calculus in Hilbert spaces
that we need,
 we refer to \cite{Peszat2007}.

\bigskip
We  start by introducing
 the Hilbert spaces
\begin{equation*}
H'\coloneqq
L^2(\mathbb{R}_-,\mathbb{R}^m),\qquad
  H\coloneqq \mathbb{R}^m\times H',
\end{equation*}
where $H'$ is endowed with its standard
scalar product
$\langle\cdot,\cdot\rangle_{L^2}$
induced by the $L^2$-norm
 and
$H$ is endowed with 
scalar product and norm given by
\begin{equation*}
  \langle x,x'\rangle_H\coloneqq \langle x_0,x'_0\rangle+\langle x_1,x'_1\rangle_{L^2},
\quad
  | x|_H\coloneqq
   \left( 
     |x_0|^2+| x_1|_{L^2}^2 \right) ^{1/2},
\quad \forall x=(x_0,x_1),\ x'=(x'_0,x'_1)\in \mathbb{R}^m\times
H'.
\end{equation*}
We next consider
the $C_0$-semigroup $S$ on $H$ defined by
\begin{equation}
  \label{eq:2017-02-13:03}
  S_t\colon H\rightarrow H,\ (x_0,x_1) \mapsto (x_0,x_0\mathbf{1}_{[-t,0]}+x_1(\cdot+t)\mathbf{1}_{(-\infty,-t)})\qquad \forall t\in\mathbb{R}^+.
\end{equation}
The
infinitesimal generator $A$ of $S$ is given by
\begin{equation}
  \label{eq:2017-02-13:04}
  A\colon D(A)\subset H\rightarrow H,\ (x_0,x_1) \mapsto (0,\dot x_1)
\end{equation}
where
\begin{equation}
  \label{eq:2017-02-13:05}
  D(A)\coloneqq \left\{(x_0,x_1)\in H\colon x_1\in W^{1,2}(\mathbb{R}^-,\mathbb{R}),\ x_0=x_1(0)\right\}.
\end{equation}

To express the regularity 
assumptions
for the comparison results (Theorem~\ref{2017-01-31:05}
and
Corollary~\ref{2018-03-15:00}),
we will
consider
 the following pseudo-metric $d_B$ on $\Theta$:
\begin{equation*}
  \begin{split}
      d_B(\theta,\theta'))\coloneqq
  &
  |t-t'|
  +|\omega(t)-\omega'(t')|+
 \left|
   \int_0^t \omega(r)dr
   -
   \int_0^{t'} \omega'(r)dr \right|
\\
   &+
   \left( 
     \int_0^T
      \left( 
        \int_{(t-\rho)\vee 0}^t
        \omega (r)dr
        -
        \int_{(t'-\rho)\vee 0}^{t'}
         \omega'(r)dr
      \right) ^2 d\rho
   \right) ^{1/2}
 \end{split}
\end{equation*}
for all 
$ \theta, \theta'\in \Theta$, and the following scalar product $\langle\cdot,\cdot\rangle_B$  and norm $|\cdot|_B$ on $H$:
\begin{equation*}
  \langle x,x'\rangle_B\coloneqq
 \langle (A-I)^{-1}x,(A-I)^{-1}x'\rangle_H\qquad
|x|_B\coloneqq |(A-I)^{-1}x|_H
\qquad
\forall x,x'\in H.
\end{equation*}
By a direct computation,
one can verify that
\begin{equation*}
  (A-I)^{-1}(x_0,x_1)=
  \left(-x_0,
    -e^\cdot x_0-\int_\cdot^0 e^{-(r-\cdot)}x_1(r)dr\right)\quad \forall x=(x_0,x_1)\in H.
\end{equation*}
Hence the norm $|\cdot|_B$ reads
\begin{equation}
\label{2017-02-15:00}
|x|_B
=
\left(
|x_0|^2+
\int_{-\infty}^0
 \left| 
e^sx_0
+\int_s^0
e^{s-r}x_1(r)dr
 \right| ^2
ds
\right)^{1/2}\quad \forall x\in H.
\end{equation}


By the very definition of $d_B$ and $|\cdot|_B$, we immediately have the following

\begin{lemma}\label{2018-03-12:00}
Let $\{\theta_n\}_n\subset \Theta$ be a sequence and let $\theta\in \Theta$.
Define $x^n_0\coloneqq \omega_n(t_n)$,
 $x^n_1\coloneqq \omega_n(\cdot+t_n)\mathbf{1}_{[-t_n,0]}$, $x_0\coloneqq \omega(t)$, $x_1\coloneqq \omega(\cdot+t)\mathbf{1}_{[-t,0]}$.
Then (with $x^n=(x_0^n,x_1^n)$, $x=(x_0,x_1)$)
\begin{equation*}
  t_n\rightarrow t\mbox{ and }
  |x^n-x|_B\rightarrow 0
  \quad
  \mbox{if and only if}
  \quad
  d_B(\theta_n,\theta)\rightarrow  0.
\end{equation*}
\end{lemma}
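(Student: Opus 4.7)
The plan is a direct computation relating the explicit formulas for $|\cdot|_B$ given in \eqref{2017-02-15:00} and for $d_B$ recalled just above the lemma. Rewriting \eqref{2017-02-15:00} as $|y|_B^2 = |y_0|^2 + \int_{-\infty}^0|f_y(s)|^2\,ds$ with $f_y(s) \coloneqq e^s y_0 + \int_s^0 e^{s-r} y_1(r)\,dr$, I observe first that $|y_0|\le |y|_B$, so convergence in the $B$-norm automatically yields $|\omega_n(t_n)-\omega(t)|=|x_0^n-x_0|\to 0$; conversely this quantity appears explicitly as the second summand of $d_B$. Combined with the trivial treatment of $|t_n-t|$, the equivalence reduces (under the standing hypotheses $t_n\to t$ and $\omega_n(t_n)\to\omega(t)$) to showing that the $L^2((-\infty,0])$-convergence of $f_{x^n}-f_x$ to zero is equivalent to the vanishing of the third and fourth summands of $d_B(\theta_n,\theta)$.

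Second, plugging $x_1(r) = \omega(r+t)\mathbf{1}_{[-t,0]}(r)$ into the definition of $f_x$, performing the change of variable $v=r+t$, and integrating by parts, I would obtain, for $\rho = -s \in [0,t]$,
\begin{equation*}
f_x(-\rho) = e^{-\rho}\omega(t) + \int_{t-\rho}^t\omega(w)\,dw - e^{-\rho}\int_0^\rho e^u\Big(\int_{t-u}^t\omega(w)\,dw\Big)du,
\end{equation*}
and $f_x(-\rho)=e^{-\rho}f_x(-t)$ for $\rho>t$. Hence $f_x(-\rho)$ is a bounded linear functional of the boundary value $\omega(t)$ and of the tail integrals $T_x(\rho)\coloneqq\int_{(t-\rho)\vee 0}^t\omega(w)\,dw$, which are precisely the objects appearing in the fourth summand of $d_B$. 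Writing the analogous expression for $f_{x^n}$ with $t,\omega$ replaced by $t_n,\omega_n$, subtracting, and taking $L^2(d\rho)$-norms yields a two-sided equivalence between $\|f_{x^n}-f_x\|_{L^2((-\infty,0])}$ and $|\omega_n(t_n)-\omega(t)| + \big(\int_0^T|T_{x^n}(\rho)-T_x(\rho)|^2\,d\rho\big)^{1/2}$, modulo correction terms controlled by $|t_n-t|$. The third summand of $d_B$, namely $|\int_0^{t_n}\omega_n-\int_0^{t}\omega|=|T_{x^n}(t_n)-T_x(t)|$, is extracted as the large-$\rho$ asymptotic of $e^{\rho}(f_{x^n}-f_x)(-\rho)$ minus the endpoint contribution, and thus is likewise controlled.

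The main obstacle will be handling the support-mismatch between $[-t_n,0]$ and $[-t,0]$ in the definitions of $x_1^n$ and $x_1$ when $t_n\neq t$. For instance, if $t_n<t$ there is a ``ghost strip'' $r\in[-t,-t_n]$ on which $x_1(r)=\omega(r+t)$ but $x^n_1(r)=0$, contributing a boundary term of the form $-\int_{-t}^{-t_n}e^{s-r}\omega(r+t)\,dr$ to $f_{x^n}-f_x$. This is bounded by $|t-t_n|\sup_{[0,t-t_n]}|\omega|$, which tends to zero because $\omega\in C([0,T],\mathbb{R}^m)$ with $\omega(0)=0$; the symmetric case $t_n>t$ is handled analogously using $\omega_n(0)=0$ and continuity of each $\omega_n$ on $[0,T]$. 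Controlling these boundary terms closes the estimate in both directions and yields the claimed equivalence.
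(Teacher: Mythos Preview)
The paper offers no proof of this lemma, asserting only that it is ``immediate from the very definition of $d_B$ and $|\cdot|_B$''. Your proposal supplies the computation the paper suppresses, and the overall strategy---integrate by parts to rewrite $f_x(-\rho)$ in terms of the tail integrals $T_x(\rho)=\int_{(t-\rho)\vee 0}^t\omega$, then compare with the summands of $d_B$---is correct and is essentially the only natural route.

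Two small points. First, a slip: for $\rho>t$ one has $f_x(-\rho)=e^{t-\rho}f_x(-t)$, not $e^{-\rho}f_x(-t)$; this does not affect the argument. Second, your ``ghost strip'' discussion, while correct, is an artifact of deriving the integration-by-parts formula for $f_{x^n}$ and $f_x$ separately (with case splits at $\rho=t_n$ and $\rho=t$ respectively) and then subtracting. It is cleaner to apply the integration by parts directly to the difference $y_1\coloneqq x_1^n-x_1$, which is supported in $[-T,0]$: setting $g(\rho)\coloneqq\int_{-\rho}^0 y_1(v)\,dv=T_{x^n}(\rho)-T_x(\rho)$, one obtains for all $\rho\geq 0$
\[
f_{x^n-x}(-\rho)=e^{-\rho}(x_0^n-x_0)+g(\rho)-\int_0^\rho e^{u-\rho}g(u)\,du,
\]
with no support mismatch. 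This Volterra-type relation can be inverted as $g(\rho)=h(\rho)+\int_0^\rho h$, where $h(\rho)\coloneqq f_{x^n-x}(-\rho)-e^{-\rho}(x_0^n-x_0)$, and the two-sided equivalence between $|x^n-x|_B$ and $|x_0^n-x_0|+|g(T)|+\|g\|_{L^2([0,T])}$ then follows by straightforward $L^2$ estimates; the point value $g(T)$ is controlled via the exponential tail $\int_T^\infty|f_{x^n-x}(-\rho)|^2\,d\rho=\tfrac{1}{2}e^{-2T}|e^Tf_{x^n-x}(-T)|^2$. This removes the need for the $|t_n-t|$ correction terms you introduce.
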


The following proposition provides an example of a functional on $\Theta$ which is $d_B$-continuous.
The proof is postponed to Appendix.

\begin{proposition}\label{2018-05-30:08}
Let $f\in L^2((0,T),\mathbb{R}^m)$.
Then the convolution
  \begin{equation}\label{2018-03-14:00}
    \Theta \rightarrow \mathbb{R},\
    \theta \mapsto 
    \int_0^t \langle f(r),\omega(t-r)\rangle dr
  \end{equation}
is $d_B$-continuous if and only if
$f\in W^{1,2}( (0,T),\mathbb{R}^m)$.
\end{proposition}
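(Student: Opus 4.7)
The plan is to exploit an integration-by-parts identity that rewrites $F(\theta)\coloneqq\int_0^t \langle f(r),\omega(t-r)\rangle dr$ in terms of exactly the building blocks of $d_B$, and then, for the converse, construct a counterexample via the duality between $L^2$ and $W^{1,2}$.

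For the sufficient direction, assuming $f\in W^{1,2}((0,T),\mathbb{R}^m)$, I would substitute $s=t-r$ and integrate by parts against the $C^1$-primitive $\Omega(s)\coloneqq\int_0^s\omega(u)\,du$ (using $\Omega(0)=0$ and the absolute continuity of $f$) to obtain
\begin{equation*}
F(\theta)=\langle f(0),\Omega(t)\rangle+\int_0^t \langle f'(\rho),\Omega(t-\rho)\rangle\,d\rho .
\end{equation*}
Writing $\Omega(t-\rho)=\Omega(t)-\int_{t-\rho}^t\omega$ and extending the outer integral from $[0,t]$ to $[0,T]$ (using $(t-\rho)\vee 0=0$ for $\rho>t$, together with $f(T)-f(t)=\int_t^T f'$) collapses the $f(0)$ and $f(t)$ terms and yields the clean identity
\begin{equation*}
F(\theta)=\Big\langle f(T),\int_0^t\omega(r)\,dr\Big\rangle-\int_0^T\Big\langle f'(\rho),\int_{(t-\rho)\vee 0}^t\omega(r)\,dr\Big\rangle d\rho .
\end{equation*}
Applying Cauchy-Schwarz to the outer integral, $|F(\theta)-F(\theta')|$ is bounded by a constant (depending on $|f(T)|$ and $|f'|_{L^2}$) times exactly the third and fourth terms appearing in $d_B(\theta,\theta')$, so $F$ is in fact $d_B$-Lipschitz.

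For the necessary direction, suppose $f\in L^2\setminus W^{1,2}$. Then the distributional derivative $f'$ fails to lie in $L^2((0,T),\mathbb{R}^m)$, so the linear form $\phi\mapsto-\int_0^T\langle f,\phi'\rangle dr$ is unbounded on $C^\infty_c((0,T),\mathbb{R}^m)$ endowed with the $L^2$-norm. After rescaling, I extract $h_n\in C^\infty_c((0,T),\mathbb{R}^m)$ with $|h_n|_{L^2}\to 0$ but $\int_0^T\langle f(r),h_n'(r)\rangle dr\not\to 0$, and define $\omega_n(s)\coloneqq h_n'(T-s)$. Compact support of $h_n$ gives $\omega_n\in\Omega$ with $\omega_n(T)=0$, $\int_0^T\omega_n=0$, and $\int_{T-\rho}^T\omega_n(r)\,dr=h_n(\rho)$, from which
\begin{equation*}
d_B\big((T,\omega_n),(T,0)\big)=|h_n|_{L^2}\longrightarrow 0,\qquad F(T,\omega_n)=\int_0^T\langle f(r),h_n'(r)\rangle dr\not\to 0,
\end{equation*}
contradicting $d_B$-continuity of $F$.

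The main obstacle sits in the first step: the integration by parts must be set up so that the correction $f(T)-f(t)$ absorbs the boundary contribution $\langle f(0),\Omega(t)\rangle$ into the single term $\langle f(T),\Omega(t)\rangle$, and the windowed integrals $\int_{(t-\rho)\vee 0}^t\omega$ appear under the integration $d\rho$ on the full interval $[0,T]$, matching the $L^2$-norm used in $d_B$. A naive attempt keeping the integration on $[0,t]$, or viewing $F$ as a linear functional on the Hilbert space $H$ and invoking the general characterization of $|\cdot|_B$-continuous functionals through $D(A^*)$, would spuriously demand the boundary condition $f(T)=0$ and therefore fail to capture the sharp equivalence stated in the proposition.
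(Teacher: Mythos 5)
Your proof is correct, and the converse direction is handled by a genuinely different route than the paper's. For the sufficient direction you and the paper do essentially the same thing: the paper merely asserts the claim follows ``by integration by part and H\"older's inequality'' (or by reference to Rosestolato's Remark~2.6), and you carry this out explicitly, arriving at the clean identity
\begin{equation*}
F(\theta)=\Big\langle f(T),\int_0^t\omega\Big\rangle-\int_0^T\Big\langle f'(\rho),\int_{(t-\rho)\vee 0}^t\omega\Big\rangle d\rho,
\end{equation*}
which is exactly the pair of $d_B$-building blocks one needs for the $d_B$-Lipschitz bound. For the necessary direction the two arguments diverge. The paper views the map $x_1\mapsto\int_{-T}^0 f(-s)x_1(s)\,ds$ as a linear functional on $(V,|\cdot|_B)$, extends it to the completion $H_B$, applies Riesz there, transfers back to $H$ via $(A^*-I)^{-1}(\overline{A-I})^{-1}$, and reads off that the representing vector lies in $D(A^*)=\mathbb{R}\times W^{1,2}(\mathbb{R}^-)$. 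Your argument instead directly produces a sequence $\omega_n$ witnessing discontinuity from any $f\in L^2\setminus W^{1,2}$, via the $L^2$/$W^{1,2}$ duality through test functions $h_n$ and the observation that $\omega_n(s)=h_n'(T-s)$ reduces $d_B\big((T,\omega_n),(T,0)\big)$ to $|h_n|_{L^2}$; it is elementary and self-contained.

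Your closing remark is more than a stylistic aside: the abstract $D(A^*)$ route, as the paper actually executes it, identifies the representing vector's second component with $f(-\cdot)\mathbf{1}_{[-T,0]}$, and membership of this function in $W^{1,2}(\mathbb{R}^-)$ requires $f(T)=0$. That is strictly stronger than the claimed conclusion and is inconsistent with the easy direction (take $f\equiv 1$: then $F(\theta)=\int_0^t\omega$ is visibly $d_B$-Lipschitz yet $f(T)\neq 0$). The issue is that Lemma~\ref{2018-03-12:00} only controls $d_B$-convergence along the image of $\Theta$, whose $x_1$-components all vanish at $-t$; it does not by itself deliver $|\cdot|_B$-continuity of the functional on all of $V$, which is what the Riesz step needs. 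Your counterexample construction sidesteps this entirely, so on the converse direction your argument is both different and, as written, more robust than the paper's.
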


\smallskip
We now define the data for the PDE on $[0,T]\times H$ associated with the original PPDE~\eqref{eq:PPDEgeneral}.
Let the functions
\begin{equation*}
  G^{*_H},
  G_{*_H}
  \colon [0,T]\times H\times
  \mathbb{R}\times \mathbb{R}^m\times\mathbb{S}^m\rightarrow \mathbb{\overline R}
\end{equation*}
be associated with $G$ as follows:
for all
$(t,x,  r,\beta,\gamma)\in
(0,T)\times H\times\mathbb{R}\times \mathbb{R}^m\times \mathbb{S}^m$,
\begin{subequations}
  \begin{equation*}
  { G }^{*_H}(t,x,r,\beta,\gamma)\coloneqq
\limsup
_{
\substack{
\hat\theta\in \Theta,\
\hat{t}\rightarrow t\\
(\hat \omega(\hat t),\hat \omega(\cdot+\hat t)\mathbf{1}_{[-\hat t,0]})
\xrightarrow{|\cdot|_B}
(x_0,x_1\mathbf{1}_{[-t,0]})\\
r',\beta',\gamma'\rightarrow r,\beta,\gamma\\
}
}
G(\hat t,\hat \omega,\hat r,\hat \beta,\hat \gamma),
\end{equation*}
  \begin{equation*}
  { G }_{*_H}(t,x,r,\beta,\gamma)\coloneqq
\liminf
_{
\substack{
\hat\theta\in \Theta,\
\hat{t}\rightarrow t\\
(\hat \omega(\hat t),\hat \omega(\cdot+\hat t)\mathbf{1}_{[-\hat t,0]})
\xrightarrow{|\cdot|_B}
(x_0,x_1\mathbf{1}_{[-t,0]})\\
r',\beta',\gamma'\rightarrow r,\beta,\gamma\\
}
}
G(\hat t,\hat \omega,\hat r,\hat \beta,\hat \gamma).
\end{equation*}
\end{subequations}

\noindent Similarly, for $u\colon \Theta\rightarrow \mathbb{\overline R}$, define the functions $u^{*_H}, u_{*_H}\colon [0,T]\times H\rightarrow \mathbb{\overline R}$ by
\begin{subequations}
  \begin{equation*}
  u^{*_H}(t,x)\coloneqq
\limsup
_{
\substack{
\hat\theta\in \Theta,\
\hat{t}\rightarrow t\\
(\hat \omega(\hat t),\hat \omega(\cdot+\hat t)\mathbf{1}_{[-\hat t,0]})
\xrightarrow{|\cdot|_B}
(x_0,x_1\mathbf{1}_{[-t,0]})
}
}
u(\hat t,\hat \omega),
\end{equation*}
  \begin{equation*}
  u_{*_H}(t,x)\coloneqq
\liminf
_{
\substack{
\hat\theta\in \Theta,\
\hat{t}\rightarrow t\\
(\hat \omega(\hat t),\hat \omega(\cdot+\hat t)\mathbf{1}_{[-\hat t,0]})
\xrightarrow{|\cdot|_B}
(x_0,x_1\mathbf{1}_{[-t,0]})
}
}
u(\hat t,\hat \omega).
\end{equation*}
\end{subequations}

\noindent
It is clear
that  the functions ${ G}^{*_H},{ G}_{*_H},u^{*_H},u_{*_H}$ are well-defined, because 
for all $(t,x)\in [0,T]\times H$ we can find a sequence
$(t,\omega_n)\in \Theta$
such that $(\omega_n(t),\omega_n(\cdot+t)\mathbf{1}_{[-t,0]})$ converges to $(x_0,x_1\mathbf{1}_{[-t,0]})$ in the norm $|\cdot|_H$, hence in $|\cdot|_B$.

\bigskip

We will now recall the definition of viscosity solution for PDEs on Hilbert spaces
 as provided
by
\cite[Ch.\ 3]{Fabbri},
and it is in order to fit
such a framework that 
we write
\eqref{eq:2017-02-14:02} by emphasizing the maximal dissipative operator $A-I$.
Let $\hat G\colon [0,T]\times H\times \mathbb{R}\times \mathbb{R}^m\times \mathbb{S}^m\rightarrow \mathbb{\overline R}$ be a function.
Consider the following parabolic equation
\begin{equation}
  \label{eq:2017-02-14:02} 
  -v_t-\langle (A-I)x,D_xv\rangle-\langle x,D_xv\rangle-\hat G(t,x,v,D_{x_0}v,D^2_{x_0x_0}v)=0\qquad \mbox{on }(0,T)\times H,
\end{equation}
where $v\colon [0,T]\times H\rightarrow \mathbb{R}$ and where
$D_{x_0}v,D^2_{x_0x_0}v$ are the first and second order Fr\'echet differentials of $v$ with respect to the first component $x_0\in \mathbb{R}^m$ of the variable $x=(x_0,x_1)\in H=\mathbb{R}^m\times L^2((0,T),\mathbb{R}^m)$.
It is important to notice that
in 
 \cite[Ch.\ 3, Definitions~3.32]{Fabbri}
some assumptions are stated with respect 
to topologies induced by an operator denoted by $B$.
In our framework, we choose 
\begin{equation*}
  B\coloneqq (A^*-I)^{-1}(A-I)^{-1}.
\end{equation*}
Because of the compactness of $(A-I)^{-1}$ (Proposition~\ref{2017-02-15:02}), hence of $B$, 
and by
\cite[Lemma~3.6(i)]{Fabbri},
in our case the definitions of
test function  and
of  viscosity sub-/supersolution
given in \cite[Ch.\ 3, Definitions~3.32 and 3.35]{Fabbri} read as follows.

\begin{definition}[Test functions]
  A
 function $\psi\colon (0,T)\times H\rightarrow \mathbb{R}$ is a test function if $\psi(t,x)=\varphi(t,x)+h(t,|x|_H)$ , where:
  \begin{enumerate}[(i)]
  \item $\varphi\in C^{1,2}( (0,T)\times H, \mathbb{R})$, it is
locally bounded, weakly sequentially lower-semicontinuous,
$\nabla_x\varphi$ takes values in $D(A^*)$, and $ \partial _t\varphi,\ A^*\nabla_x\varphi,\ \nabla_x\varphi,\ D^2_{xx}\varphi$ are uniformly continuous on $(0,T)\times H$;
\item $h\in C^{1,2}((0,T)\times \mathbb{R},\mathbb{R})$ 
and is such that, for every $t\in (0,T)$, $h(t,\cdot)$ is even and $h(t,\cdot)$ is non-decreasing on $\mathbb{R}^+$.
  \end{enumerate}
\end{definition}


\begin{definition}[$H$-viscosity sub-/supersolution]\label{2018-02-21:00}
A
locally bounded
weakly sequentially u.s.c.\
  function $u\colon (0,T)\times H\rightarrow \mathbb{R}$
 is a $H$-viscosity subsolution of 
\eqref{eq:2017-02-14:02} 
if, whenever $u-\psi$ has a local maximum at a point $(t,x)\in (0,T)\times H$ for a test function $\psi(s,y)=\varphi(s,y)+h(s,|y|_H)$, then
\begin{equation}\label{eq:2017-02-21:00}
  -\psi_t(t,x)-\langle x,(A-I)^*\nabla_x\varphi(t,x)\rangle_H
  -\langle x,\nabla_x\psi(t,x)\rangle_H
  -\hat G(t,x,u(t,x),\nabla_{x_0}\psi(t,x),D^2_{x_0x_0}\psi(t,x))\leq 0.
\end{equation}
A
locally bounded
weakly sequentially l.s.c.\  function $v\colon (0,T)\times H\rightarrow \mathbb{R}$
 is a viscosity supersolution of 
\eqref{eq:2017-02-14:02} 
if, whenever $v+\psi$ has a local 
minimum at a point $(t,x)\in (0,T)\times H$ for a test function $\psi(s,y)=\varphi(s,y)+h(s,|y|_H)$, then
\begin{equation}
  \label{eq:2017-02-21:01}
  \psi_t(t,x)+\langle x,(A-I)^*\nabla_x\varphi(t,x)\rangle
  +\langle x,\nabla_x\psi(t,x)\rangle
  -\hat G(t,x,u(t,x),-\nabla_{x_0}\psi(t,x),-D^2_{x_0x_0}\psi(t,x))\geq 0.
\end{equation}
\end{definition}

\begin{remark}\label{2018-03-12:04}
  The Definition~\ref{2018-02-21:00} does not correspond exactly to \cite[Definition 3.35]{Fabbri}, because we drop the 
continuity assumption on  $\hat{ G}$.
We will recover such assumption 
 when dealing with  comparison.
\end{remark}


The first main result of this section is the following

\begin{theorem}\label{2017-01-31:05}
  Let $u$ be a
 $\mathcal{P}_L$-viscosity subsolution
(\emph{resp.}\ supersolution)
  of \eqref{eq:PPDEgeneral}.
Then  
${ u}^{*_H}$
(resp.~${u}_{*_H}$)
 is a $H$-viscosity subsolution
(\emph{resp.}\ supersolution)
   of
   \begin{subequations}
        \begin{equation}
     \label{2018-03-10:00}
       -v_t-\langle (A-I)x,D_xv\rangle-\langle x,D_xv\rangle- G^{*_H}(t,x,v,D_{x_0}v,D^2_{x_0x_0}v)=0
     \end{equation}
        \begin{equation}
     \label{2018-03-10:01}
\mbox{(resp.\ }       -v_t-\langle (A-I)x,D_xv\rangle-\langle x,D_xv\rangle- G_{*_H}(t,x,v,D_{x_0}v,D^2_{x_0x_0}v)=0
\mbox{).}
     \end{equation}
   \end{subequations}
 \end{theorem}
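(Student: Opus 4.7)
The overall strategy is to transfer the Hilbert-space viscosity inequality for $u^{*_H}$ at $(t_0, x_0) \in (0, T) \times H$ to a $\mathcal{P}_L$-subjet inequality for $u$ at a nearby point $\hat\theta$ in $\Theta$. Two ingredients will be used throughout: the compactness of $(A-I)^{-1}$, which turns weak convergence in $H$ into $|\cdot|_B$-convergence and hence yields the required weak sequential upper semicontinuity of $u^{*_H}$ via Lemma~\ref{2018-03-12:00}; and the explicit embedding $E\colon \Theta \to [0,T]\times H$, $E(t, \omega) := (t, (\omega(t), \omega(\cdot+t)\mathbf{1}_{[-t,0]}))$, under which the shift of paths corresponds to the action of the $C_0$-semigroup $S_s$ on the $H'$-component.

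Suppose $u^{*_H} - \psi$ attains a strict local maximum at $(t_0, x_0)$, with $\psi = \varphi + h(\cdot, |\cdot|_H)$ a test function. By the very definition of $u^{*_H}$ there exist $\theta_n = (t_n, \omega_n) \in \Theta$ with $t_n \to t_0$, $|x^n - x_0|_B \to 0$ for $x^n := (\omega_n(t_n), \omega_n(\cdot+t_n)\mathbf{1}_{[-t_n,0]})$, and $u(\theta_n) \to u^{*_H}(t_0, x_0)$. Pulling $\psi$ back through the embedding gives the function $\phi_n(s, \omega') := \psi\big(t_n + s,\; S_s x^n + (\omega'(s), \omega'(\cdot+s)\mathbf{1}_{[-s,0]})\big)$, which dominates $u^{\theta_n}$ modulo $o_n(1)$ on a $|\cdot|_B$-neighborhood of the origin (because $u(t_n+s, \omega_n\otimes_{t_n}\omega') \leq u^{*_H}(t_n + s, E(t_n + s, \omega_n\otimes_{t_n}\omega'))$ by definition of the upper semi-limit). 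Reading off the Fréchet expansion of $\phi_n$ at $(0, 0)$ in the variables $(s, \omega'(s))$ and applying the chain rule for the semigroup $S$ (justified on the $\varphi$-part by $\nabla_x\varphi \in D(A^*)$, while the radial $h$-part contributes non-positively via the identity $\langle Ax, \nabla_x h\rangle_H = \langle x,\nabla_x h\rangle_H + \langle (A-I)x, \nabla_x h\rangle_H$ with $\langle (A-I)x, \nabla_x h\rangle_H \leq 0$ by dissipativity and monotonicity of $h_r$), a standard perturbation/variational argument produces points $\hat\theta_n \in \Theta$ close to $\theta_n$ with subjet elements $(\alpha_n, \beta_n, \gamma_n) \in \underline{\mathcal{J}}_L u(\hat\theta_n)$ satisfying
\begin{align*}
\alpha_n &\;\leq\; \psi_t(t_n, x^n) + \langle x^n,(A-I)^*\nabla_x\varphi(t_n, x^n)\rangle_H + \langle x^n,\nabla_x\psi(t_n, x^n)\rangle_H + o_n(1),\\
\beta_n &\;\to\; \nabla_{x_0}\psi(t_0, x_0), \qquad \gamma_n \;\to\; D^2_{x_0 x_0}\psi(t_0, x_0).
\end{align*}

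Applying the $\mathcal{P}_L$-viscosity subsolution property at $\hat\theta_n$ yields $-\alpha_n - G(\hat\theta_n, u(\hat\theta_n), \beta_n, \gamma_n) \leq 0$; passing to the upper limit as $n \to \infty$ and invoking the definition of $G^{*_H}$ as an upper semi-limit in the $|\cdot|_B$-topology gives exactly \eqref{eq:2017-02-21:00}. The supersolution case is symmetric. I expect the main obstacle to be the production of the subjet element at $\hat\theta_n$: the strict local maximality of $u^{*_H} - \psi$ controls $u^{\theta_n} - \phi_n$ only on $|\cdot|_B$-neighborhoods, whereas tangency in $\underline{\mathcal{J}}_L$ is measured through $\overline{\mathcal{E}}_L$ with $|\cdot|_\infty$-localization via $\ch_\delta$. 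Bridging this mismatch requires showing that paths sampled under any $\mathbb{P}\in \mathcal{P}_L$ keep their embeddings $|\cdot|_B$-close to $x^n$ up to time $\T\wedge \ch_\delta$, which should follow from Doob-type estimates on the bounded absolutely continuous part $\A$ and the square-integrable martingale part $\M$ provided by the very definition of $\mathcal{P}_L$.
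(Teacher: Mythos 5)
Your proposal is directionally aligned with the paper -- both start from a test function $\psi$ at a local maximum of $u^{*_H}-\psi$, both aim to produce elements of $\underline{\mathcal{J}}_L u$ at nearby points of $\Theta$, and both pass to the limit using the definition of $G^{*_H}$. But the step you flag as ``the main obstacle'' is not a technicality that ``should follow from Doob-type estimates''; it is the bulk of the actual proof, and what you sketch in its place does not close the gap.

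The core issue is the claim that one can read off a $\mathcal{P}_L$-jet from a ``Fr\'echet expansion of $\phi_n$ at $(0,0)$ in the variables $(s,\omega'(s))$'' together with a chain rule for the semigroup. The pullback $\phi_n(s,\omega')=\psi(t_n+s,\,S_s x^n+(\omega'(s),\omega'(\cdot+s)\mathbf{1}_{[-s,0]}))$ does not admit a classical first- or second-order Taylor expansion in $s$: the derivative of $S_s x^n$ at $s=0$ is $Ax^n$, which is undefined for generic $x^n\notin D(A)$; and the infinite-dimensional component $(\omega'(s),\omega'(\cdot+s)\mathbf{1}_{[-s,0]})$ depends on the whole trajectory $\omega'|_{[0,s]}$, not just the endpoint $\omega'(s)$, so it is not a function of the two finite-dimensional variables you expand in. This is precisely why the paper's proof represents the embedded path as a mild solution of an $H$-valued SDE (Lemma~\ref{2017-12-19:04}), introduces the Yosida approximations $A_n$ and the processes $\Z^{t,x}_n$ solving genuine strong SDEs \eqref{eq:2017-02-13:08n}--\eqref{2018-01-28:02}, applies It\^o's formula in Hilbert space to $\psi(\cdot,\Z^{t,x}_n)$, and then passes $n\to\infty$ using the factorization method. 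Only after this does one obtain the inequality \eqref{eq:2018-01-29:04} from which an element $(\alpha,\beta,\gamma)\in\underline{\mathcal{J}}_L^H u^{*_H}(t,x)$ is extracted; your $\alpha$-bound is the output of that computation, not an input you can assume.

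There is a second gap, also non-trivial. Even granting pointwise domination $u^{\theta_n}\le\phi_n+o_n(1)$ on a $|\cdot|_B$-neighborhood, the jet $\underline{\mathcal{J}}_L$ is defined via tangency of a \emph{quadratic} $\varphi^{\alpha,\beta,\gamma}$ under the nonlinear expectation $\overline{\mathcal{E}}_L$, so you must show $\overline{\mathcal{E}}_L\bigl[(\phi_n-\varphi^{\alpha_n,\beta_n,\gamma_n})(\T\wedge\ch_\delta,\B)\bigr]\le 0$ up to controllable errors. That inequality is again the It\^o/SDE estimate, not a softer perturbation argument; ``standard perturbation/variational'' reasoning does not produce $\mathcal{P}_L$-jets because there is no pointwise maximum principle here, only a maximum in the $\overline{\mathcal{E}}_L$-sense whose manipulation requires the dynamic-programming machinery (Proposition~\ref{prop:dpp}, Lemma~\ref{lem:OS}). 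Moreover you start at $\theta_n$ where $u(\theta_n)$ only \emph{approximates} $u^{*_H}(t_0,x_0)$, so a further optimal-stopping-type shift to $\hat\theta_n$ is needed; the paper avoids this by first producing an $H$-jet exactly at $(t,x)$ and then invoking the stability argument (Proposition~\ref{prop:stable2} via Proposition~\ref{2018-01-30:08}) to descend to $\Theta$-jets of $u$. In short, your outline rearranges the paper's ingredients but leaves out the two load-bearing ones: the Hilbert-space It\^o calculus with Yosida approximation, and the $\overline{\mathcal{E}}_L$-tangency machinery that converts it into a bona fide $\underline{\mathcal{J}}_L$-element.
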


Theorem~\ref{2017-01-31:05}
allows to exploit the comparison for $H$-viscosity solutions (\cite[Theorem~3.50]{Fabbri}) to get
as corollary a comparison result for $\mathcal{P}_L$-viscosity solutions
(Corollary~\ref{2018-03-15:00}). As a byproduct, we also obtain a sufficient condition for the Perron-type result we proved in Section \ref{sec:perron}.

 \begin{corollary}\label{2018-03-15:00}
  Let $G$ satisfy Assumptions~\ref{2017-02-21:05}, with $d=d_B$.
  Let $u$ be a bounded $\cP_L$-viscosity subsolution of
  \eqref{eq:PPDEgeneral}  and
  let $v$ be a bounded $\cP_L$-viscosity supersolution of
  \eqref{eq:PPDEgeneral}.
  Suppose that
  \begin{equation}\label{eq:terminaluc}
    \lim_{r,\eta\rightarrow 0}
    \sup
     \left\{ 
       u(\th)-v(\th')\colon
      d_B(\th,\th')<r,\
       T-\eta\leq t,t'\leq T
     \right\} \leq 0.
  \end{equation}
Then
 $u\le v$ on $\Th$.
Moreover,
 Assumption \ref{2018-03-16:00}
is fulfilled  with $\underline u\coloneqq u,\overline v\coloneqq v, d\coloneqq d_B$.
 \end{corollary}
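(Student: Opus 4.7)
The plan is to reduce the comparison on $\Theta$ to a comparison on the Hilbert product $[0,T]\times H$ by invoking Theorem~\ref{2017-01-31:05}, and then to apply \cite[Theorem~3.50]{Fabbri}. First, I would feed the bounded $\cP_L$-viscosity subsolution $u$ and supersolution $v$ into Theorem~\ref{2017-01-31:05} to obtain that $u^{*_H}$ is an $H$-viscosity subsolution of \eqref{2018-03-10:00} and $v_{*_H}$ is an $H$-viscosity supersolution of \eqref{2018-03-10:01}. Under Assumption~\ref{2017-02-21:05} with $d=d_B$, the function $G$ is $d_B$-uniformly continuous in the path variable and jointly continuous/monotone/Lipschitz in the other arguments; combining this with Lemma~\ref{2018-03-12:00}, which identifies $d_B$-convergence on $\Theta$ with $|\cdot|_B$-convergence of the corresponding Hilbert representatives, one sees that $G^{*_H}=G_{*_H}$ define a single continuous extension $\hat G$ on $[0,T]\times H\times\mathbb{R}\times \mathbb{R}^m\times \mathbb{S}^m$ that inherits the structural properties demanded by \cite[Ch.~3]{Fabbri}.

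Next, I would translate the terminal compatibility condition \eqref{eq:terminaluc} to the Hilbert side. By Lemma~\ref{2018-03-12:00} a sequence $(t_n,\omega_n)$ in $\Theta$ with $t_n\to T$ and converging in $d_B$ corresponds to convergence of $(t_n,\omega_n(t_n),\omega_n(\cdot+t_n)\mathbf{1}_{[-t_n,0]})$ to $(T,x)$ in the $|\cdot|_B$ norm on $H$, so \eqref{eq:terminaluc} is equivalent to $u^{*_H}(T,\cdot)\le v_{*_H}(T,\cdot)$ on $H$, which is precisely the terminal input required by \cite[Theorem~3.50]{Fabbri}. That theorem then yields $u^{*_H}\le v_{*_H}$ on $(0,T)\times H$. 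Composition with the canonical embedding $\iota\colon\Theta\to [0,T]\times H$, $\iota(\theta)\coloneqq(t,\omega(t),\omega(\cdot+t)\mathbf{1}_{[-t,0]})$, together with the inequalities $u\le u^{*_H}\circ\iota$ and $v_{*_H}\circ\iota\le v$ that follow directly from the definitions of the $*_H$ envelopes, gives the desired $u\le v$ on $\Theta$.

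For the second assertion I would verify Assumption~\ref{2018-03-16:00} with $\underline u\coloneqq u$, $\overline v\coloneqq v$, $d\coloneqq d_B$. Item (i) requires $u\le v$ (just proved) and $u_*(T,\cdot)=v(T,\cdot)\eqqcolon\xi$; the inequality $u_*\le v$ everywhere is immediate, while the equality at $t=T$ is read off \eqref{eq:terminaluc} by taking the liminf in $\theta$ at fixed $\theta'=(T,\omega)$ and using the $d_B$-l.s.c.\ of $v$. Item (ii) is a self-reference: given a $d_B$-u.s.c.\ subsolution $u'$ and a $d_B$-l.s.c.\ supersolution $v'$ with $u_*\le u'\le v'\le v$, the sandwich forces $u'(T,\cdot)=v'(T,\cdot)=\xi$, hence \eqref{eq:terminaluc} holds trivially for $(u',v')$, and the first part of the corollary applied to this pair gives $u'\le v'$ on $\Theta$.

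The main obstacle is the rigorous matching of hypotheses: checking that the extension $\hat G$ of $G$ to $[0,T]\times H$ satisfies the continuity, monotonicity and Lipschitz requirements of \cite[Theorem~3.50]{Fabbri} in the $B$-topology specified in that reference, and that $u^{*_H},v_{*_H}$ satisfy the local boundedness and weak-sequential semicontinuity properties built into Definition~\ref{2018-02-21:00}; these are consequences of the boundedness of $u,v$, the definitions of the envelopes, and the compactness of $(A-I)^{-1}$ which makes weak and $|\cdot|_B$-convergences interact well, but each must be pinned down before the Hilbert-space comparison can be cited.
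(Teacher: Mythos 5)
Your first part — pushing $u,v$ to $H$-viscosity sub-/supersolutions via Theorem~\ref{2017-01-31:05}, identifying $G^{*_H}=G_{*_H}$ as a single extension $\hat G$ under Assumption~\ref{2017-02-21:05} with $d=d_B$, translating \eqref{eq:terminaluc} into Fabbri's terminal compatibility condition, citing \cite[Theorem~3.50]{Fabbri}, and then pulling $u^{*_H}\le v_{*_H}$ back to $\Theta$ through the embedding $\iota$ together with $u\le u^{*_H}\circ\iota$ and $v_{*_H}\circ\iota\le v$ — is exactly the paper's route, written out in more detail than the paper bothers to.

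The verification of the second assertion has genuine gaps. For Assumption~\ref{2018-03-16:00}(i) you need $u_*(T,\cdot)=v(T,\cdot)$, but the manipulation you describe (fix $\theta'=(T,\omega)$ in \eqref{eq:terminaluc} and take $\liminf$ in $\theta$) produces only the inequality $u_*(T,\omega)\le v(T,\omega)$: the condition \eqref{eq:terminaluc} is a one-sided control of $u-v$, and neither it, nor $u\le v$, nor u.s.c.\ of $u$, nor l.s.c.\ of $v$, gives the reverse inequality $u_*(T,\cdot)\ge v(T,\cdot)$. Likewise, for item~(ii) you apply the first part of the corollary to $(u',v')$, which requires verifying \eqref{eq:terminaluc} for this new pair; the claim that this is ``trivial'' because $u'(T,\cdot)=v'(T,\cdot)=\xi$ is not justified, since pointwise agreement of the terminal traces of two semicontinuous functions does not, by itself, yield the \emph{uniform} near-terminal upper bound on $u'(\theta)-v'(\theta')$ that \eqref{eq:terminaluc} demands. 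The sandwich $u_*\le u',v'\le v$ only reduces the question to a uniform near-terminal estimate on $v(\theta)-u_*(\theta')$, which again is not a consequence of the stated hypotheses (e.g.\ $v$ may stay bounded away from $\xi$ at every $t<T$ while still being l.s.c.\ with $v(T,\cdot)=\xi$). To close this, you would need either an additional two-sided/uniform terminal condition in place of the one-sided \eqref{eq:terminaluc}, or a separate argument (e.g.\ near-terminal regularity of $v$ and of $u_*$) that propagates the terminal estimate from $(u,v)$ to arbitrary sandwiched pairs.
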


Before proving
Theorem~\ref{2017-01-31:05}
and 
Corollary~\ref{2018-03-15:00}, we need 
a preliminary 
discussion
useful to relate the two different settings of $\mathcal{P}_L$-viscosity solutions and of viscosity solutions on $H$ as defined by
Definition~\ref{2018-02-21:00}.

\smallskip
Let $U\colon [0,T]\times H\rightarrow \mathbb{R}$ be a
 measurable  function such that 
$U(t,x)=U(t,x_0,x_1\mathbf{1}_{[-t,0]})$ for $(t,x)\in [0,T]\times H$ ($x=(x_0,x_1)$).
Notice that,
for any $(s,x)\in H$,
 the measurability of $U$ entails the measurability of 
\begin{equation}
\label{2018-01-30:07}
  (\Theta,d_\infty)\rightarrow \mathbb{R},\  \theta \mapsto
U^{s,x}( \theta)\coloneqq
 U((s+t)\wedge T,x_0+\omega(s),x_1  \otimes_s[x_0, \omega]),
\end{equation}
where, for any function $\omega\in \Omega$ and $(s,x)\in [0,T]\times H$,
\begin{equation}\label{2017-12-19:02}
  x_1 \otimes _s[x_0, \omega]\coloneqq
  \begin{dcases}
    x_1(t+s) & \forall t\in (-\infty,-s)\\
    x_0+\omega(t+s) & \forall t\in [-s,0].
  \end{dcases}
\end{equation}

\noindent Let $(t,x)\in [0,T]\times H$
and let
$U\colon [0,T]\times H\rightarrow \mathbb{R}$ be a
measurable 
 function,  locally bounded from above (\emph{resp.}~locally bounded from below).
We define
the  jet 
$\underline{\mathcal{J }}_L^H U(t,x)$ 
(\emph{resp.}\ $\overline{\mathcal{J }}^H_L U(t,x)$)
by
\begin{multline*}
      \underline{\mathcal{J }}^H_L U(t,
x)\coloneqq
  \Big\{ 
    (\alpha,\beta,\gamma)\in \mathbb{R}\times \mathbb{R}^m\times \mathbb{S}^m\colon
    U(t,x)=
    \overline{\mathcal{E}}_L
     \left[ 
       (U^{t,x}-\varphi^{\alpha,\beta,\gamma})(\T
       \wedge 
       \ch_\delta
       ,\B)
     \right],\\
 \mbox{for some $\delta\in(0,T-t]$}
\Big\}
\end{multline*}
\begin{multline*}
\hskip-10pt\left(\mbox{\emph{resp.}}\  \overline{\mathcal{J }}^H_L U(t,x)\coloneqq
  \Big\{ 
    (\alpha,\beta,\gamma)\in \mathbb{R}\times \mathbb{R}^m\times \mathbb{S}^m\colon
    U(t,x)=
    \underline{\mathcal{E}}_L
     \left[ 
       (U^{t,x}-\varphi^{\alpha,\beta,\gamma})(\T\wedge 
       \ch_\delta
       ,\B)
     \right],\right.\\ 
\left.     \mbox{for some $\delta\in[0,T-t]$}
  \Big\} \right).
\end{multline*}

\medskip
Now let $u_n$ be a sequence of $d_\infty$-u.s.c.\ functions uniformly bounded from above and let us define a function $\overline{u}^H\colon [0,T]\times H\rightarrow \dbR$
(\emph{resp.\ }$\underline{u}_H$)
 by
 \begin{subequations}
   \begin{equation*}
\overline{u}^H(t,x) \coloneqq  
\limsup
_{
\substack{
\hat\theta\in \Theta,\
\hat{t}\rightarrow t\\
(\hat \omega(\hat t),\hat \omega(\cdot+\hat t)\mathbf{1}_{[-\hat t,0]})
\xrightarrow{|\cdot|_B}
(x_0,x_1\mathbf{1}_{[-t,0]})\\
n\rightarrow \infty
}
}
u_n(\hat t,\hat \omega)\quad \forall (t,x)\in[0,T]\times H,
\end{equation*}
   \begin{equation*}
\mbox{(\emph{resp}.\ }\underline{u}_H(t,x) \coloneqq  
\liminf
_{
\substack{
\hat\theta\in \Theta,\
\hat{t}\rightarrow t\\
(\hat \omega(\hat t),\hat \omega(\cdot+\hat t)\mathbf{1}_{[-\hat t,0]})
\xrightarrow{|\cdot|_B}
(x_0,x_1\mathbf{1}_{[-t,0]})\\
n\rightarrow \infty
}
}
u_n(\hat t,\hat \omega)\quad \forall (t,x)\in[0,T]\times H\mbox{)}.
\end{equation*}
\end{subequations}
\begin{prop}\label{prop:stable2}
Let $u_n,\overline u^H,\underline u^H$ be as above.
Then for any $(\a,\b,\g) \in \underline{\mathcal{J}}_L^H \overline{u}^H(t,x)$
(\emph{resp.\ }$(\a,\b,\g) \in
\overline{\mathcal{J}}_L^H
 \underline{u}_H(t,x)$), there exist $\th_n \in \Th$, $(\a_n,\b_n, \g)\in \ul\cJ_L u_n(\th_n)$ such that
 \begin{subequations}
   \begin{equation*}
 t_n\rightarrow  t,\quad 
(\omega_n(t_n),\omega_n(\cdot+t_n)\mathbf{1}_{[-t_n,0]})
\xrightarrow{|\cdot|_B}(x_0,x_1\mathbf{1}_{[-t,0]}),
\end{equation*}
\begin{equation*}
  \mbox{and }
 \big( u_n(\th_n),\a_n,\b_n\big) \rightarrow \big(\overline{u}^H(t,x), \a,\b\big)\quad
\mbox{(\emph{resp.\ }}
\big( u_n(\th_n),\a_n,\b_n\big) \rightarrow \big(\underline{u}_H(t,x), \a,\b\big)
\mbox{ ).}
\end{equation*}
\end{subequations}
\end{prop}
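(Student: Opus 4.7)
The plan is to mirror the proof of Proposition~\ref{prop:stable}, replacing the $d$-based convergence by the mixed mode defining $\overline{u}^H$ and $\underline{u}_H$: convergence of the time component together with $|\cdot|_B$-convergence of the Hilbert shifts $(\hat\omega(\hat t), \hat\omega(\cdot + \hat t)\mathbf{1}_{[-\hat t,0]})$. I describe the subjet/subsolution case; the superjet/supersolution case is fully symmetric.

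Given $(\alpha,\beta,\gamma) \in \underline{\mathcal{J}}^H_L \overline{u}^H(t,x)$, for any $\epsilon > 0$ the triple $(\alpha+\epsilon,\beta,\gamma)$ still lies in the jet. First I would pick a recovery sequence $(\hat t_n, \hat\omega_n)$ with $\hat t_n \to t$, $(\hat\omega_n(\hat t_n), \hat\omega_n(\cdot + \hat t_n)\mathbf{1}_{[-\hat t_n, 0]}) \to (x_0, x_1\mathbf{1}_{[-t, 0]})$ in $|\cdot|_B$, and $u_n(\hat t_n, \hat\omega_n) \to \overline{u}^H(t,x)$. Then by compactness of $\mathcal{P}_L$ (Proposition~\ref{2017-07-18:26}) and upper semicontinuity of expectations, I would select $\dbP_n \in \mathcal{P}_L$ attaining the sublinear expectation in the tangency inequality, extract a subsequential weak limit $\dbP^*$, and realize $\{\dbP_n\}$ together with $\dbP^*$ on a common probability space via Skorohod representation, producing $\X^n \to \X^*$ uniformly $\widehat\dbP$-a.s. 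The Fatou chain of \eqref{mainestimate-stable} should then force $\dbP^*[\T = 0] = 1$.

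The main obstacle will be the upper-semicontinuity step in this Fatou chain, namely
\begin{equation*}
\limsup_{n\to\infty} u_n\bigl(\hat t_n + \T^n, \hat\omega_n \otimes_{\hat t_n} \B^n\bigr) \leq \overline{u}^H(t,x) \qquad \widehat\dbP\mbox{-a.s.\ on }\{\T^* = 0\}.
\end{equation*}
By the very definition of $\overline{u}^H$, this reduces to checking that $\hat t_n + \T^n \to t$ and that the Hilbert shift of $(\hat t_n + \T^n, \hat\omega_n \otimes_{\hat t_n} \B^n)$ converges in $|\cdot|_B$ to $(x_0, x_1\mathbf{1}_{[-t, 0]})$. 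Time-convergence is immediate; for the Hilbert shift I would decompose it as the semigroup action $S_{\T^n}$ applied to the Hilbert shift of $(\hat t_n, \hat\omega_n)$, plus a contribution from $\B^n$ on $[-\T^n, 0]$. Strong continuity of $S$, the relation between $|\cdot|_B$ and $|\cdot|_H$ encoded by the explicit formula \eqref{2017-02-15:00}, and the uniform $|\cdot|_\infty$-boundedness of $\B^n$ on $[0,\T^n]$ under $\mathcal{P}_L$, together deliver the required $|\cdot|_B$-convergence.

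Once $\dbP^*[\T = 0] = 1$ is established, Steps~2 and 3 of Proposition~\ref{prop:stable} carry over essentially unchanged: the DPP (Proposition~\ref{prop:dpp}) gives, $\dbP_n$-a.s.\ on $\{\T < \ch_\delta\}$, that $(\alpha + \epsilon, \beta + 2\gamma\B^n_{\T^n}, \gamma) \in \underline{\mathcal{J}}_L u_n(\hat t_n + \T^n, \hat\omega_n \otimes_{\hat t_n} \B^n)$. Intersecting with the full-probability events from the previous step and exploiting positivity, I would extract specific realizations $\theta_n \coloneqq (\hat t_n + \T^n(\vartheta_n), \hat\omega_n \otimes_{\hat t_n} \B^n(\vartheta_n))$ and $\beta_n \coloneqq \beta + 2\gamma\B^n_{\T^n}(\vartheta_n)$ along which $t_n \to t$, the Hilbert shifts of $\theta_n$ converge to $(x_0, x_1\mathbf{1}_{[-t,0]})$ in $|\cdot|_B$, $u_n(\theta_n) \to \overline{u}^H(t,x)$, $\beta_n \to \beta$, and $(\alpha + \epsilon, \beta_n, \gamma) \in \underline{\mathcal{J}}_L u_n(\theta_n)$. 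A diagonal extraction in $\delta \to 0$ and $\epsilon \to 0$ then yields $\alpha_n \to \alpha$ and produces the claimed sequence.
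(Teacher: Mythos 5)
Your proposal is correct and follows exactly the route the paper intends: the paper's own proof consists of the single sentence that "the desired result is very similar to Proposition~\ref{prop:stable} and can be proved with the same argument," and your plan is precisely that transplantation. The one genuinely non-trivial point in the adaptation is the upper-semicontinuity step in the Fatou chain of \eqref{mainestimate-stable}, and you have correctly isolated it and supplied the right ingredient: the Hilbert shift of $(\hat t_n + \T^n, \hat\omega_n \otimes_{\hat t_n} \B^n)$ decomposes as $S_{\T^n}$ applied to the Hilbert shift of $(\hat t_n, \hat\omega_n)$ plus the increment $(\B^n(\T^n), \B^n(\cdot+\T^n)\mathbf{1}_{[-\T^n,0]})$, and then strong $|\cdot|_B$-continuity of $S$ (which follows from $S_t (A-I)^{-1} = (A-I)^{-1} S_t$ and $C_0$-ness of $S$ on $(H,|\cdot|_H)$) together with the Skorohod a.s.\ uniform convergence $|\B^n - \B^*|_\infty \to 0$ (not merely boundedness) delivers the required $|\cdot|_B$-convergence.
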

\begin{proof}
The desired result is very similar
to Proposition \ref{prop:stable}
and can be proved with  the same argument.
\end{proof}

\begin{proposition}\label{2018-01-30:08}
Let
$u$
be a 
 $\mathcal{P}_L$-viscosity subsolutions to~\eqref{eq:PPDEgeneral}.
Then
\begin{equation}
  \label{eq:2017-03-02:04}
  - \alpha
  -{G}^{*_H}(t,x,
  u^{*_H}(t,x),\alpha,\beta,\gamma)\leq 0,\qquad
\forall (t,x)\in (0,T)\times H,\
  \forall (\alpha,\beta,\gamma)\in \mathcal{\underline J}_L^H
 u^{*_H}(t,x).
\end{equation}
\end{proposition}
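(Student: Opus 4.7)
The plan is to reduce Proposition~\ref{2018-01-30:08} to a direct application of the stability-type Proposition~\ref{prop:stable2}, used with the constant sequence $u_n\equiv u$. With this choice, the lifted function $\overline{u}^H$ introduced just before Proposition~\ref{prop:stable2} coincides by construction with $u^{*_H}$, which is exactly the object appearing in the statement to be proved.

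First I fix $(t,x)\in (0,T)\times H$ and a jet $(\alpha,\beta,\gamma)\in \underline{\mathcal{J}}_L^H u^{*_H}(t,x)$, and apply Proposition~\ref{prop:stable2} to produce a sequence $\theta_n=(t_n,\omega_n)\in\Theta$ together with jets $(\alpha_n,\beta_n,\gamma)\in\underline{\mathcal{J}}_L u(\theta_n)$ such that
\begin{equation*}
t_n\to t,\qquad (\omega_n(t_n),\omega_n(\cdot+t_n)\mathbf{1}_{[-t_n,0]})\xrightarrow{|\cdot|_B}(x_0,x_1\mathbf{1}_{[-t,0]}),
\end{equation*}
and $(u(\theta_n),\alpha_n,\beta_n)\to(u^{*_H}(t,x),\alpha,\beta)$.

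Next I invoke the $\mathcal{P}_L$-viscosity subsolution property of $u$: for every $n$ one has $-\alpha_n-G(\theta_n,u(\theta_n),\beta_n,\gamma)\leq 0$. Passing to the $\limsup$ as $n\to\infty$, the $-\alpha_n$ terms tend to $-\alpha$, while the $\limsup$ of $G(\theta_n,u(\theta_n),\beta_n,\gamma)$ is bounded above by $G^{*_H}(t,x,u^{*_H}(t,x),\beta,\gamma)$, because the mode of convergence delivered by Proposition~\ref{prop:stable2} is precisely the one used in the $\limsup$ defining $G^{*_H}$. This yields
\begin{equation*}
-\alpha-G^{*_H}(t,x,u^{*_H}(t,x),\beta,\gamma)\leq 0,
\end{equation*}
which is the required inequality.

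I expect no substantive obstacle in this argument: the only real input, namely the ability to approximate a jet of $u^{*_H}$ at $(t,x)$ by jets of $u$ along sequences admissible for the $|\cdot|_B$-based limsup defining $G^{*_H}$, is already packaged in Proposition~\ref{prop:stable2}. What remains is the routine bookkeeping verification that the admissible sequences for $\underline{\mathcal{J}}_L^H u^{*_H}$ and for $G^{*_H}$ share the same underlying mode of convergence on $H$, which is transparent from their definitions.
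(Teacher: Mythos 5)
Your proof is correct and follows essentially the same route as the paper's: both invoke Proposition~\ref{prop:stable2} (implicitly with the constant sequence $u_n\equiv u$, so that $\overline{u}^H=u^{*_H}$), then apply the subsolution property of $u$ at each $\theta_n$, and finally pass to the limit using the $\limsup$-definition of $G^{*_H}$ together with the specific mode of convergence guaranteed by Proposition~\ref{prop:stable2}.
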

\begin{proof}
  Let $(t,x)\in (0,T)\times H$ and $(\alpha,\beta,\gamma)\in \underline{\mathcal{J}}_L^H u^{*_H}(t,x)$.
By the very definition of $ u^{*_H}$, we can apply 
Proposition~\ref{prop:stable2} to get sequences $ \theta_n\in \Theta$ and $(\alpha_n,\beta_n)\in \mathbb{R}\times \mathbb{R}^m$ such that 
$(\alpha_n,\beta_n,\gamma)\in \underline{\mathcal{J}}_Lu( \theta_n)$
and
\begin{equation}
  \label{2017-12-19:01}
t_n\rightarrow t,\quad
(\omega_n(t_n),
\omega_n(\cdot+ t_n)\mathbf{1}_{[- t_n,0]})
\xrightarrow{|\cdot|_B}(x_0,x_1\mathbf{1}_{[-t,0]}),
\quad 
( u_n(\th_n),\a_n,\b_n) 
\rightarrow (u^{*_H}(t,x), \a,\b).  \end{equation}
Since $u$ is a $\mathcal{P}_L$-viscosity solution to~\eqref{eq:PPDEgeneral}, we have, for all $n$,
$  - \alpha_n
  -G(t_n, \omega_n,
u_n(\theta_n),
\beta_n,\gamma)\leq 0$.
By taking the limit $n\rightarrow \infty$,
 by using
\eqref{2017-12-19:01},
and 
by the very definition of
 ${ G}^{*_H}$,
 we obtain
\begin{equation*}
    - \alpha
  - G^{*_H}(t,x,u^{*_H}(t,x),\alpha,\beta,\gamma)\leq 0,
\end{equation*}
which concludes the proof.
\end{proof}

In what follows, for $t\in[0,T]$ and $\mathbb{P}\in \mathcal{P}_L$, we denote by
$\mathbb{G}^t$  the translated filtration $\{\mathcal{G}_{0\vee (s-t)}\}_{s\in[0,T]}$ and by
 $\mathbb{\overline G}^{\mathbb{P},t}_+$ 
its
right-continuous $\mathbb{P}$-completion.
For $t\in[0,T]$, $x=(x_0,x_1)\in H$, 
we define the $H$-valued process
$\Z ^{t,x}=(\Z _0^{t,x},\Z ^{t,x}_1)$ as follows:
\begin{equation}\label{2018-01-28:01}
  \begin{split}
    &\mbox{ if }s\in [0,t],\quad
\Z ^{t,x}_{0,s}\coloneqq x_0\quad\mbox{and}\quad \Z ^{t,x}_{1,s}\coloneqq x_1\\
    &  \mbox{ if }s\in (t,T],\quad
    \Z ^{t,x}_{0,s}\coloneqq x_0+\A_{s-t}+\M_{s-t}
 \quad\mbox{and}\quad
    \Z ^{t,x}_{1,s}(r)\coloneqq
    \begin{dcases}
      x_1(r+s-t) &r\in(-\infty,-s+t) \\
      \Z ^{t,x}_{0,r+s}& r\in[-s+t,0).
    \end{dcases}
  \end{split}
\end{equation}
We also introduce the following functions
  \begin{equation*}
    \tilde{\mathbf{b}}\colon [0,T]\times \tilde \Theta\rightarrow H
    \qquad
    \sigmab
    \colon  \tilde \Theta\rightarrow L(\mathbb{R}^m,H)
  \end{equation*}
defined by
(\,\footnote{Recall that $\vartheta=((t,\omega),a,\mu,q)$.})
\begin{equation*}
\tilde{\mathbf{b}}_s(\vartheta)\coloneqq 
\begin{dcases}
(\dot a_s,0)  &\mbox{if }a\in 
W^{1,2}([0,T],\mathbb{R}^m)
\\
(0,0)&\mbox{otherwise}
\end{dcases}
\qquad\qquad
\mbox{and}
\qquad\qquad
\sigmab(\vartheta)(v)\coloneqq
(v,0)
\end{equation*}
for $\vartheta\in \tilde \Theta,s\in[0,T],v\in \mathbb{R}^m$.

\begin{lemma}\label{2017-12-19:04}
Let $\Z,\mathbf{\widetilde b}$ be as above.
Then
there exists an $\mathbb{R}^m$-valued predictable process $ \mathbf{b}$ such that,
for 
all $\mathbb{P}\in \mathcal{ P}_L$, 
$  \left\{ 
(s,\vartheta)\in [0,T] \times \tilde{\Theta}\colon 
\mathbf{b}_s(\vartheta)
\neq
\tilde{\mathbf{b}}_s(\vartheta)
 \right\} $
is contained in a $Lebesgue \otimes \mathbb{P}  $-null set and
\begin{equation}
  \label{eq:2017-02-13:08}
\mathbb{P}\mbox{-a.s.}\  \begin{dcases}
  \Z ^{t,x}_s= x& \forall s\in[0,t]\\
  \Z ^{t,x}_s= S_{s-t}x+\int_t^sS_{s-r}
\mathbf{b}_{r-t}dr
  +\int_t^sS_{s-r}\sigmab d\M^t_r
&\forall s\in(t,T],
  \end{dcases}
\end{equation}
where $\M^t\coloneqq\{\M_{0\vee (r-t)}\}_{r\in[0,T]}$ is a $(\mathbb{\overline G}^{\mathbb{P},t}_+,\mathbb{\overline P})$-martingale.
\end{lemma}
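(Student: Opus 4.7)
The plan is to carry out three steps: (i) construct a $\mathbb{G}$-predictable $\mathbb{R}^m$-valued process $\mathbf{b}$ which, under every $\mathbb{P}\in\mathcal{P}_L$, coincides Lebesgue$\otimes\mathbb{P}$-a.e.\ with the pathwise derivative $\dot a$; (ii) verify the mild formulation \eqref{eq:2017-02-13:08} by a direct computation in the two components of $H=\mathbb{R}^m\times H'$; and (iii) argue that $\M^t$ is a $(\mathbb{\overline G}^{\mathbb{P},t}_+,\mathbb{\overline P})$-martingale.

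For step (i), I would set, for each $n\geq 1$,
$$\mathbf{b}^n_s(\vartheta) := n\bigl(a_s - a_{(s-1/n)\vee 0}\bigr),\qquad s\in[0,T],\ \vartheta\in\tilde\Theta.$$
This process is jointly continuous in $s$ and $\mathcal G_s$-measurable at each $s$, hence $\mathbb{G}$-predictable. By \eqref{2016-09-28:03}, under every $\mathbb{P}\in\mathcal{P}_L$ the path $r\mapsto a_r$ belongs to $AC([0,T],\mathbb{R}^m)$ with $|\dot a|_\infty\leq L$, so Lebesgue differentiation gives $\mathbf{b}^n_s(\vartheta)\to \dot a_s$ for Lebesgue$\otimes\mathbb{P}$-a.e.\ $(s,\vartheta)$. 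Setting
$$\mathbf{b}_s(\vartheta) := \limsup_{n\to\infty}\mathbf{b}^n_s(\vartheta)$$
(replacing the $\limsup$ by $0$ on the exceptional set where it is infinite) produces the required $\mathbb{G}$-predictable $\mathbb{R}^m$-valued process, which after the canonical embedding $v\mapsto(v,0)\in H$ agrees with $\tilde{\mathbf{b}}$ outside the desired null set simultaneously for every $\mathbb{P}\in\mathcal{P}_L$.

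For step (ii), I would compute the two components of the right-hand side of \eqref{eq:2017-02-13:08} separately using \eqref{eq:2017-02-13:03} and the form of $\sigmab$. For the $\mathbb{R}^m$-component at time $s\in(t,T]$, the semigroup term contributes $x_0$, the drift integral contributes $\int_t^s\dot a_{r-t}\,dr$, and the stochastic integral contributes $\int_t^s d\M^t_r$, and their sum recovers $\Z^{t,x}_{0,s}$. For the $H'$-component evaluated at $r\in[-(s-t),0]$, the indicator $\mathbf{1}_{[-(s-u),0]}(r)$ appearing in the second coordinate of $S_{s-u}$ equals $1$ iff $u\leq s+r$; tracking this dependence shows that the drift integral contributes the increment of $\A$ up to time $s+r-t$, the stochastic integral the analogous increment of $\M$, and the semigroup term contributes $x_0$, matching $\Z^{t,x}_{0,s+r}$ in \eqref{2018-01-28:01}. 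For $r<-(s-t)$ only the semigroup contribution $x_1(r+s-t)$ survives, again matching the definition.

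For step (iii), the translated process $\M^t_s=\M_{0\vee(s-t)}$ is a $\mathbb{G}^t$-martingale under $\mathbb{P}$ by a direct unfolding of conditional expectations from the martingale property \eqref{2016-09-28:02} of $\M$; since $\M$ is a continuous square-integrable martingale under each $\mathbb{P}\in\mathcal{P}_L$ with Lipschitz quadratic variation (by \eqref{2016-09-29:01}--\eqref{2016-09-28:04}), so is $\M^t$, and the martingale property is preserved by passing to the right-continuous $\mathbb{P}$-completion $\mathbb{\overline G}^{\mathbb{P},t}_+$ by the standard augmentation results for càdlàg martingales. I expect the main technical step to be the bookkeeping in (ii) for the $H'$-component, where one must carefully track how the shift action of $S$ interacts with the time integration of the drift and of the stochastic integral; once this is verified, (i) and (iii) are essentially routine.
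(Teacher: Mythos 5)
Your proposal is correct in its conclusions and follows the same overall architecture as the paper (construct a predictable version of $\dot a$, verify the mild formulation componentwise, appeal to standard augmentation for the martingale claim), but two points are worth comparing. For step (i), the paper constructs $\mathbf{b}$ by first showing that the preimage of $W^{1,2}$ under $(s,\vartheta)\mapsto a_{s\wedge\cdot}$ is predictable, then taking the left-hand derivative $\partial_s^- F$ on the predictable set where it exists; your difference-quotient construction $\mathbf{b}^n_s=n(a_s-a_{(s-1/n)\vee 0})$ followed by a $\limsup$ is a legitimate and perhaps more elementary alternative, and the required predictability/a.e.\ convergence both hold. For step (ii), however, your direct pointwise evaluation of the $H'$-component of the stochastic convolution at a fixed $r$ is where the paper does real work that your sketch elides: the stochastic integral $\int_t^s S_{s-u}\sigmab\,d\M^t_u$ is defined as an $H$-valued random variable, and asserting that its $L^2(\mathbb{R}_-)$-coordinate can be evaluated at a point $r$ and equals $\M_{(s+r-t)\vee 0}$ requires justification. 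The paper handles this by pairing against arbitrary $(y_0,0)$ and $(0,y_1)\in H$, using It\^o's formula (\cite[Theorem~D.2]{Peszat2007}) for the finite-dimensional component and the stochastic Fubini theorem (\cite[Theorem~8.14]{Peszat2007}) to interchange the $dv$ and $d\M^t$ integrals in the infinite-dimensional one; that is precisely the rigorous version of your heuristic ``tracking the indicator.'' You correctly flag this as the main technical step, but to make the argument complete you should either cite stochastic Fubini explicitly or switch to the weak formulation as the paper does. Step (iii) is routine and both proofs treat it that way.
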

\begin{proof}
We  show the existence
of $\mathbf{b}$.
First, notice that the function
\begin{equation*}
  \rho\colon [0,T]\times \tilde{\Theta}\rightarrow L^2([0,T],\mathbb{R}^m),\ (s,\vartheta) \mapsto a_{s\wedge \cdot}
\end{equation*}
is predictable.
By using
\cite[p.\ 67, statement (2)]{Federer1969},
it is not difficult to show that
$W^{1,2}([0,T],\mathbb{R}^m)$ is a Borel subset of $L^2([0,T],\mathbb{R}^m)$.
Define $B\coloneqq
\rho^{-1}(W^{1,2}([0,T],\mathbb{R}^m))$.
It then follows that
the map
\begin{equation*}
  [0,T]\times \tilde{ \Theta}\rightarrow \mathbb{R},\ (s,\vartheta) \mapsto \mathbf{1}_{
B}(T,\vartheta)
\end{equation*}
is predictable, and hence also the map
\begin{equation*}
F\colon  [0,T]\times \tilde{\Theta}\rightarrow \mathbb{R}^m,\ (s,\vartheta) \mapsto \mathbf{1}_B(T,\vartheta)a_s
\end{equation*}
is predictable.
We then obtain the predictability of the left-hand side derivative
\begin{equation*}
  B_1\rightarrow \mathbb{R}^m,\ (s,\vartheta) \mapsto  \partial^- _sF(s,\vartheta),
\end{equation*}
on the predictable set
 $B_1$ of points $(s,\vartheta)$ where such derivative exists.
Since $\mathbb{P}\in \mathcal{P}_L$,
$Leb \otimes \mathbb{P} ([0,T]\times \tilde{\Theta}\setminus B_1)=0$,
hence $\mathbf{b}_s(\vartheta)
\coloneqq (\mathbf{1}_{B_1}(s,\vartheta) \partial ^-_sF(s,\vartheta),0)\in H$ satisfies the needed requirements.

Clearly 
\eqref{eq:2017-02-13:08} holds true if $s\in [0,t]$. We then assume $s\in (t,T]$.
Let $\mathbb{P}\in \mathcal{P}_L$.
For
 $y\in \mathbb{R}^m$,
by It\^o's formula
(\cite[Theorem D.2]{Peszat2007}) and noticing that
\begin{equation*}
  \langle(y_0,0),z\rangle_H=\langle(y_0,0),S_v z\rangle_H\quad \forall z\in H,\ v\geq 0,
\end{equation*}
 we have, $\mathbb{P}$-a.s.,
\begin{equation}\label{2018-03-08:00}
  \begin{split}
    \langle (y_0,0),\Z^{t,x}_s\rangle_H=&
 \langle y_0,\Z^{t,x}_{0,s}\rangle=
\langle y_0,x_0\rangle
+\int_t^s \langle y_0,\dot a_{r-t} \rangle dr
+
\int_t^s \langle y_0,d\M^t_r\rangle\\
=&
\langle (y_0,0),x\rangle_H
+\int_t^s \langle (y_0,0),S_{s-r}
\mathbf{b}_{r-t}
\rangle_H dr
+
\int_t^s \langle (y_0,0),S_{s-r}\sigma d\M^t_r\rangle_H\\
=&
\langle (y_0,0),x
+\int_t^s  S_{s-r}
\mathbf{b}_{r-t}
dr
+
\int_t^s S_{s-r}\sigma d\M^t_r
\rangle_H.
\end{split}
\end{equation}
Now let  $y_1\in L^2((-\infty,0),\mathbb{R}^m)$.
By
the very definition
of $\Z^{t,x}_s$,
we have, $\mathbb{P}$-a.s.,
\begin{equation}\label{2018-03-08:02}
  \begin{split}
    \langle (0,y_1),\Z^{t,x}_s\rangle_H
    =&
    \langle y_1,\Z^{t,x}_{1,s}\rangle_{L^2}
=
\int_{-\infty}^0
\langle
y _1(v),\Z^{t,x}_{1,s}(v)
\rangle
dv\\
    =&
    \int_{-\infty}^0
      \langle
      y_1(v),
     \left( 
       \mathbf{1}_{(-\infty,-s+t)}(v)
         x_1(v)
         +
                \mathbf{1}_{[-s+t,0)}(v)
\Z^{t,x}_{0,v+s} 
     \right)\rangle
dv\\
=&
\int_{-\infty}^0
\langle
y_1(v),
 \left( 
\mathbf{1}_{(-\infty,-s+t)}(v)
x_1(v)
+
\mathbf{1}_{[-s+t,0)}(v)
x_0 \right)
\rangle dv\\
&+
\int_{-\infty}^0
\langle
y_1(v),
\mathbf{1}_{[-s+t,0)}
(v)
 \left( 
\int_t^{v+s}
\dot
a_{r-t}dr
+
\int_t^{v+s}
d\M^t_r
 \right) \rangle dv\eqqcolon \mathbf{I}+\mathbf{II}
\end{split}
\end{equation}
Now observe that
\begin{equation}
  \label{2018-03-08:01}
  \mathbf{I}=
\langle
(0,y_1),
S_{s-t}x
\rangle_H.
\end{equation}
Moreover, by 
  using the stochastic Fubini theorem
(\cite[Theorem 8.14]{Peszat2007})
and denoting by $\M_i^t$ and $y^i_1$ the $i$th component of $\M^t$
and $y_1$, respectively,
 and by $e_i$ the $i$th element of the canonical basis of $\mathbb{R}^m$,
we have, $\mathbb{P}$-a.s.,
\begin{equation}
  \label{2018-03-08:03}
  \begin{split}
    \mathbf{II}=&
    \int_t^s
     \left( \int_{-\infty}^0
       \langle
       y_1(v),
       \mathbf{1}_{[r-s,0)}(v)
       \dot a_{r-t} 
\rangle
dv
        \right) 
       dr
+
\sum_{
i=1,\ldots,m}       
    \int_t^s
     \left( \int_{-\infty}^0
       y_1^i(v)
       \mathbf{1}_{[r-s,0)}(v)
dv
        \right) 
d\M_{i,r}^t
\\
       =&
       \int_t^s
                \langle
          (0,y_1),S_{s-r} \mathbf{b}_{r-t}
          \rangle_H dr
+ 
\sum_{
i=1,\ldots,m}       
   \int_t^s
\langle
(0,y_1),
S_{s-r}\sigma(e_i)
\rangle_H
d\M_{i,r}^t\\
=&
                \langle
          (0,y_1),     \int_t^s
S_{s-r} \mathbf{b}_{r-t} dr
          \rangle_H 
+ 
                \langle
          (0,y_1),      \int_t^s
S_{s-r}\sigma
d\M^t_r\rangle_H 
  \end{split}
\end{equation}
By
collecting
 \eqref{2018-03-08:00},
\eqref{2018-03-08:02},
\eqref{2018-03-08:01},
\eqref{2018-03-08:03}
and 
recalling that  $y_0,y_1$ were arbitrarily chosen, we 
obtain
\eqref{eq:2017-02-13:08}.
\end{proof}

%

Notice that the $H$-valued process $\Z^{t,x}$ defined by 
\eqref{2018-01-28:01}
is independent on $\mathbb{P}\in \mathcal{P}_L$ and everywhere continuous,
whereas equation
\eqref{eq:2017-02-13:08} depends on the chosen $\mathbb{P}$ through the chosen version of the stochastic integral.

\begin{proof}[\textbf{Proof of Theorem~\ref{2017-01-31:05}}]
We write the proof only for the subsolution case, since the other case is symmetric.

By the very definition,
${u}^{*_H}$ is $|\cdot|+|\cdot|_B$-u.s.c.,
hence by 
compactness of $(A-I)^{-1}$ (Proposition~\ref{2017-02-15:02})
 it is weakly sequentially u.s.c.
Now  let $\psi(s,y)=\varphi(s,y)+h(s,|y|_H)$ be a test function and let $(t,x)\in (0,T)\times H$ be a maximum for $ u^{*_H}-\psi$ on $[t-\xi,t+\xi]\times B(x;\xi)$, where $B(x;\xi)$ denotes the $(H,|\cdot|_H)$-ball centered in $x$ with radius $\xi\in (0,t\wedge (T-t))$.

Notice that $A-1/2$ is maximal dissipative.
Let $A_n\coloneqq nA(n-A)^{-1}$, $n\geq 1$, be the Yosida 
approximation of $A$.
Then there exists $\overline n\geq 1$ such that
$|S^n_t|_{L(H)}\leq e^t$, $t\geq 0$, $n\geq \overline n$,
where $S^n$ denotes the continuous semigroup generated by $A_n$.

For some $\epsilon\in (0,\xi\wedge (T-t))$ to be chosen later,
define the function
 $\tau_\epsilon\colon
 C([0,T],H)\rightarrow [t, T]$ by
\begin{equation*}
  \tau_\epsilon(f)\coloneqq \inf \left\{ 
    r\in[t,T]\colon 
    r-t
    +
    \sup_{v\in[t,r]}|f(v)-x|_H
    \geq \epsilon
     \right\}\qquad \forall f\in C([0,T],H).
\end{equation*}
Then
$\tau_\epsilon$ is continous.
Moreover, for any adapted $H$-valued continuous process 
$\Proc$, $\tau_\epsilon(\Proc)$ is a stopping time.

  Now  
\emph{we fix $\mathbb{P}\in \mathcal{P}_L$}.
For  $n\geq 1$, define the process
$\Z^{t,x}_n$ by
\begin{equation}
  \label{eq:2017-02-13:08n}
 \begin{dcases}
  \Z ^{t,x}_{n,s}\coloneqq x& \forall s\in[0,t]\\
  \Z ^{t,x}_{n,s}\coloneqq S^n_{s-t}x+\int_t^sS^n_{s-r}
\mathbf{b}_{r-t}dr
  +\int_t^sS^n_{s-r}\sigmab d\M_r^t
&\forall s\in(t,T].
  \end{dcases}
\end{equation}
We can choose for $\Z_n^{t,x}$ a 
continuous 
$(\mathbb{\overline G}^{\mathbb{P},t}_+,\mathbb{\overline P})$-version,
 and we do it (we refer to the discussion in \cite[Section~11.4]{Peszat2007}, after recalling that in our case the quadratic variation of $\M$ is bounded).
For any
$\mathbb{G}^t$-stopping time
$\rho\geq t$ 
and for any 
$f\in C([0,T],H)$,
denote
\begin{equation}
\label{2018-01-30:04}
  \tau(f)\coloneqq
\tau_{\rho,\epsilon}(f)
\coloneqq \rho\wedge \tau_\epsilon(f).
\end{equation}
Let
 $\Z$ be defined 
as in
\eqref{2018-01-28:01}.
Notice that $\tau(\Z^{t,x})$ is a 
$\mathbb{G}^t$-stopping time
and $\tau(\Z^{t,x}_n)$ is a 
$\mathbb{\overline G}^{\mathbb{P},t}$-stopping time.
For
$n\geq 1$, define
on 
 $(\tilde{\Theta},\mathbb{\overline G}^{\mathbb{P},t},\mathbb{\overline P})$
the
continuous processes $\Y^{t,x}$ and
$\Y^{t,x}_n$ by
\begin{equation}
  \label{2018-01-22:00}
 \begin{dcases}
  \Y ^{t,x}_s\coloneqq x& \forall s\in[0,t]\\
  \Y ^{t,x}_s\coloneqq S_{s-t}x+\int_t^s
 \mathbf{1}_{r<\tau(\Z^{t,x}) }S_{s-r}
\mathbf{b}_{r-t}dr
  +\int_t^s
\mathbf{1}_{r\leq \tau(\Z^{t,x}) }S_{s-r}\sigmab
 d\M^t_r
&\forall s\in(t,T],
  \end{dcases}
\end{equation}
and 
\begin{equation}
  \label{2018-01-22:00n}
 \begin{dcases}
  \Y ^{t,x}_{n,s}\coloneqq x& \forall s\in[0,t]\\
  \Y ^{t,x}_{n,s}\coloneqq S^n_{s-t}x+\int_t^s
 \mathbf{1}_{r\leq\tau(\Z^{t,x}_n) }S^n_{s-r}
\mathbf{b}_rdr
  +\int_t^s
\mathbf{1}_{r\leq\tau(\Z^{t,x}_n) }S^n_{s-r}\sigmab
 d\M_r^t
&\forall s\in(t,T].
  \end{dcases}
\end{equation}
Notice that,
by
Lemma~\ref{2017-12-19:04}
and by the very definition of $\Z_n^{t,x}$, 
we have, for all
$s\in (t,T]$,
 $n\geq  1$,
\begin{equation}
  \label{eq:2018-01-22:01a}
  \begin{dcases}
    \Y^{t,x}_s=\Z^{t,x}_s&  \mathbb{\overline P}\mbox{-a.s.\ on\ }\{\tau(\Z^{t,x})>s\}\\
    \Y^{t,x}_s=S_{s-\tau(\Z^{t,x})}\Z^{t,x}_{\tau(\Z^{t,x})}
    &  \mathbb{\overline P}\mbox{-a.s.\ on\ }\{\tau(\Z^{t,x})\leq s\}
  \end{dcases}
\end{equation}
\begin{equation}\label{eq:2018-01-22:01n}
  \begin{dcases}
    \Y^{t,x}_{n,s}=\Z^{t,x}_{n,s}&  \mathbb{\overline P}\mbox{-a.s.\ on\ }\{\tau(\Z^{t,x}_n)>s\}\\
    \Y^{t,x}_{n,s}=S^n_{s-\tau(\Z^{t,x}_n)}\Z^{t,x}_{n,\tau(\Z^{t,x}_n)}
    &  \mathbb{\overline P}\mbox{-a.s.\ on\ }\{\tau(\Z^{t,x}_n)\leq s\}.
  \end{dcases}
\end{equation}
Now, since $A_n$ 
generates
 a continuous group in $L(H)$, the process $\Y^{t,x}_n$ 
is
 the strong
solution to the following linear SDE in integral form on $(\tilde{ \Theta},\mathbb{\overline G}^{\mathbb{P},t},\mathbb{\overline P})$:
 \begin{equation}
   \label{2018-01-28:02}
   \begin{split}
        \Y^{t,x}_{n,s} =&x+\int_t^s  \left( 
     A_n \Y^{t,x}_{n,r} +
     \mathbf{1}_{r\leq\tau(\Z^{t,x}_n)}\mathbf{b}_{r-t} \right) dr
   +\int_t^s
     \mathbf{1}_{r\leq\tau(\Z^{t,x}_n)}
     \sigmab  d\M^t_r  \\
=&
x+
A_n
\int_t^s  \Y^{t,x}_{n,r} dr
+
\sigmab
 \left( \A_{\tau(\Z^{t,x}_n) \wedge s-t}
+\M_
{\tau(\Z^{t,x}_n) \wedge s-t} \right) 
\qquad\qquad\qquad \mathbb{P}\mbox{-a.s.},\ \forall s\in[t,T).   
\end{split}
\end{equation}
By \eqref{eq:2018-01-22:01n}
and \eqref{2018-01-28:02}, we have
 \begin{equation}\label{2018-01-29:05}
   \begin{split}
        \Z^{t,x}_{n,
          \tau(\Z^{t,x}_n)}
 =&x+\int_t^{\tau(\Z^{t,x}_n)
}  \left( 
     A_n \Z^{t,x}_{n,r} +
     \mathbf{b}_{r-t} \right) dr
   +\int_t^{\tau(\Z^{t,x}_n)}
     \sigmab  d\M_r^t  \\
=&
x+
A_n
\int_t^{\tau(\Z^{t,x}_n)}  \Z^{t,x}_{n,r} dr
+
\sigmab
 \left( \A_{\tau(\Z^{t,x}_n) -t}
+\M_
{\tau(\Z^{t,x}_n) -t} \right) 
  \qquad\mathbb{P}\mbox{-a.s.},\ \forall s\in[t,T).
\end{split}
\end{equation}

By
 the assumptions on the test function $\psi=\varphi+h$, by 
using 
\eqref{2018-01-29:05},
and by recalling
that
$|\Z^{t,x}_{n,\tau(\Z^{t,x}_n)\wedge \cdot}-x|_\infty\leq \epsilon$ for all $n$,
 we can apply 
It\^o's formula 
(\cite[Theorem D.2]{Peszat2007}) to $\psi(\tau(\Z^{t,x}_n),\Z^{t,x}_{n,\tau(\Z^{t,x}_n)})$ and take the expectation, to get,
 for  $n\geq \overline n$,
\begin{equation}\label{2018-01-29:02}
  \begin{split}
    \mathbb{E}^\mathbb{\overline P}
 \left[ 
  \psi(
\tau(\Z^{t,x}_n)
  ,
  \Z^{t,x}_{n,
\tau(\Z^{t,x}_n)
}
  )
 \right] =&  \psi(t,x)+
\mathbb{E}^\mathbb{\overline P}
 \left[ 
  \int_t^{\tau(\Z^{t,x}_n)}
    \left( 
      \partial_t \psi(r,\Z_{n,r}^{t,x})
      +
      \langle
      \nabla_x 
\psi(r,\Z_{n,r}^{t,x}),
      A_n\Z^{t,x}_{n,s}
      \rangle 
       \right)dr \right] \\
&+
\mathbb{E}^\mathbb{\overline P}
 \left[ 
  \int_t^{\tau(\Z^{t,x}_n)}
      \langle
      \nabla_x \psi(r,\Z_{n,r}^{t,x}),
      \mathbf{b}_{r-t}
      \rangle  dr
 \right] \\
&+      \frac{1}{2}\mathbb{E}^\mathbb{\overline P}
\left[
  \int_t^{\tau(\Z^{t,x}_n)} 
  \operatorname{Tr}
  \left[ 
    \sigmab
   \dot\Q_{r-t}
    \sigmab^*
    D^2_{xx}\psi
    (r,\Z^{t,x}_{n,r})
  \right] 
  dr \right]\\
\leq &
  \psi(t,x)+
\mathbb{E}^\mathbb{\overline P}
 \left[
  \int_t^{\tau(\Z^{t,x}_n)}  
    \left( 
      \partial_t \psi(r,\Z_{n,r}^{t,x})
      +
      \langle
      A_n^*\nabla_x \varphi(r,\Z_{n,r}^{t,x}),
      \Z^{t,x}_{n,r}
      \rangle
 \right) dr \right] \\ 
&+
\mathbb{E}^\mathbb{\overline P}
 \left[
  \int_t^{\tau(\Z^{t,x}_n)}  
      \langle
      \nabla_x h(r,\Z_{n,r}^{t,x}),
      \Z^{t,x}_{n,r}\rangle
       dr \right] \\
&+
\mathbb{E}^\mathbb{\overline P}
 \left[ 
  \int_t^{\tau(\Z^{t,x}_n)}  
      \langle
      \nabla_x \psi(r,\Z_{n,r}^{t,x}),
      \mathbf{b}_{r-t}
      \rangle  dr
 \right] \\
&+      \frac{1}{2}\mathbb{E}^\mathbb{\overline P}
\left[ 
  \int_t^{\tau(\Z^{t,x}_n)} 
  \operatorname{Tr}
  \left[ 
   \dot\Q_{r-t}
    D^2_{x_0x_0}\psi
    (r,\Z^{t,x}_{n,r})
  \right] 
  dr \right],
      \end{split}
\end{equation}
where we have used the fact that
 $h$ is radial and that
\begin{equation*}
  \langle A_ny,y\rangle\leq |y|^2_H\qquad\forall y\in H, \ n\geq \overline n.
\end{equation*}
We will now take in consideration each term appearing in the  formula above and pass to the limit as $n\rightarrow \infty$ to get a useful inequality for
 $\mathbb{E} \left[ \psi\left(
\tau(\Z^{t,x}_n),
\Z^{t,x}_{\tau(\Z^{t,x}_n)}\right)
\right] $.
Using
\eqref{eq:2017-02-13:08},
\eqref{eq:2017-02-13:08n},
and
 the
standard machinery
based on the
factorization formula for 
stochastic convolutions with $C_0$-semigroups 
(see \cite[Sections~11.3--4]{Peszat2007} and recall that the quadratic variation of $\M$ is bounded in our case)
one can  see that
\begin{equation}
  \label{eq:2018-01-28:03}
  \lim_{n\rightarrow \infty}
  \mathbb{E}^\mathbb{\overline P}
   \left[ 
     \left|
       \Z^{t,x}
       -
       \Z^{t,x}_n\right|^2_\infty
   \right] =0.
\end{equation}
This entails immediately the two following facts
\begin{subequations}
  \begin{equation}
    \label{eq:2018-01-30:00}
  \lim_{n\rightarrow \infty}
  \mathbb{E}^\mathbb{\overline P}
   \left[ 
     \left|
       \Z^{t,x}_{\tau(\Z^{t,x}_n)} 
       -
       \Z^{t,x}_{n,{\tau(\Z^{t,x}_n)} }\right|^2_\infty
   \right] =0,
  \end{equation}
  \begin{equation}
\label{2018-01-30:01}
\lim_{n\rightarrow \infty}  \tau_\epsilon(\Z^{t,x}_n)=
\  \tau_\epsilon(\Z^{t,x})
\qquad \mathbb{\overline P}\mbox{-a.s.}
 \end{equation}
\end{subequations}
By
\eqref{eq:2018-01-30:00} and
\eqref{2018-01-30:01} it follows that
\begin{equation}
  \label{eq:2018-01-30:02}
    \lim_{n\rightarrow \infty}
  \mathbb{E}^\mathbb{\overline P}
   \left[ 
     \left|
       \Z^{t,x}_{\tau(\Z^{t,x})} 
       -
       \Z^{t,x}_{n,{\tau(\Z^{t,x}_n)} }\right|^p_\infty
   \right] =0,
\end{equation}
for any $p\in[1,2)$.
By the assumptions on $\psi$
and by \eqref{eq:2018-01-30:02}
we can then 
pass to the limit in 
\eqref{2018-01-29:02} and obtain, 
 \begin{equation}
   \label{eq:2018-01-29:00}
   \begin{split}
     \mathbb{E}^\mathbb{\overline P} \left[ \psi\left(
\tau(\Z^{t,x}),\Z^{t,x}_{\tau(\Z^{t,x})}\right)
     \right] =& \lim_{n\rightarrow \infty}
     \mathbb{E}^\mathbb{\overline P} \left[ \psi\left(
\tau(\Z_n^{t,x}),\Z^{t,x}_{n,\tau(\Z^{t,x})}
\right)
     \right]\\
\leq& \psi(t,x)+ \mathbb{E}^\mathbb{\overline P} 
     \left[ 
 \left( 
       \int_t^{\tau(\Z^{t,x})}
       \partial_t \psi(r,\Z_{r}^{t,x}) 
     +\langle
       A^*\nabla_x \varphi(r,\Z_r^{t,x}), \Z^{t,x}_r \rangle \right) dr
     \right]\\
     &+ 
\mathbb{E}^\mathbb{\overline P} \left[ \int_t^{\tau(\Z^{t,x})} \left(   \langle \nabla_x h(r,\Z_r^{t,x}), \Z^{t,x}_r\rangle
 + 
   \langle \nabla_x
   \psi(r,\Z_r^{t,x}), \mathbf{b}_{r-t} \rangle
 \right)  dr
 \right] \\
 &+ \frac{1}{2}\mathbb{E}^\mathbb{\overline P} \left[ \int_t^{\tau(\Z^{t,x})}
   \operatorname{Tr}
   \left[  \dot\Q_{r-t} D^2_{x_0x_0}\psi (r,\Z^{t,x}_r)
   \right] dr \right].
\end{split}
\end{equation}
Since $\mathbb{\overline P}$ was arbitrary, 
\eqref{eq:2018-01-29:00} holds for all $\mathbb{\overline P}\in \mathcal{P}_L$.
Now, 
since 
$ \partial _t\psi$, $A^*\nabla_x\varphi$, $\nabla_x h$, $\nabla_x\psi$, $D^2_{x_0x_0}\psi$ are uniformly continuous on bounded sets,
recalling the definition of $\tau=\tau_{\rho,\epsilon}$ in
\eqref{2018-01-30:04},
and
noticing that
$\epsilon\downarrow 0$ implies $\tau_\epsilon(\Z^{t,x})\downarrow t$ $\mathbb{\overline P}$-a.s.,
one can easily see that
  \begin{multline*}
              \lim_{\epsilon\downarrow 0}
  \sup_{
\substack{
\mathbb{P}\in \mathcal{P}_L
\\\rho\geq t}}
 \mathbb{E}^\mathbb{\overline P} 
     \left[ 
\mathbf{1}_{ \tau (\Z^{t,x})>t}
  \left(\frac{1}{
 \tau (\Z^{t,x})-t}
       \int_t^{ \tau (\Z^{t,x})}
 \left( 
       \partial_t \psi(r,\Z_r^{t,x}) 
     +\langle
       A^*\nabla_x \varphi(r,\Z_r^{t,x}), \Z^{t,x}_r \rangle \right) dr
\right.\right.\\
\left.\left.\phantom{         \int_t^{ \tau (\Z^{t,x})}
}
 -      \partial_t \psi(t,x) 
     +\langle
       A^*\nabla_x \varphi(t,x), x \rangle
 \right)
 \right] =0
     \end{multline*}

     \begin{equation*}
       \lim_{\epsilon\downarrow 0} 
  \sup_{
\substack{
\mathbb{P}\in \mathcal{P}_L
\\\rho\geq t}}
       \mathbb{E}^\mathbb{\overline P} 
       \left[
         \mathbf{1}_{ \tau (\Z^{t,x})>t}
          \left( 
\frac{1}{ \tau (\Z^{t,x})-t}
\int_t^{\tau (\Z^{t,x})} \langle \nabla_x h(r,\Z_r^{t,x}),
         \Z^{t,x}_s\rangle dr 
         -\langle\nabla_x h(t,x),x\rangle
       \right) \right] =0
   \end{equation*}

\begin{equation*}
      \lim_{\epsilon\downarrow 0}
  \sup_{
\substack{
\mathbb{P}\in \mathcal{P}_L
\\\rho\geq t}}
 \mathbb{E}^\mathbb{\overline P} 
 \left[
 \frac{\mathbf{1}_{ \tau (\Z^{t,x})>t}
}{
 \tau (\Z^{t,x})-t}
 \left( 
      \int_t^{\tau(\Z^{t,x})}
   \langle \nabla_x
   \psi(r,\Z_r^{t,x}), \mathbf{b}_{r-t} \rangle
dr
- 
 \langle \nabla_x
   \psi(t,x), \A_{\tau(\Z^{t,x})-t} \rangle
 \right) 
 \right] =0
\end{equation*}

\begin{equation*}
      \lim_{\epsilon\downarrow 0}
  \sup_{
\substack{
\mathbb{P}\in \mathcal{P}_L
\\\rho\geq t}}
\mathbb{E}^\mathbb{\overline P} \left[
\frac{\mathbf{1}_{ \tau (\Z^{t,x})>t}}
{ \tau (\Z^{t,x})-t}
 \left( 
      \int_t^{ \tau (\Z^{t,x})}
 \operatorname{Tr}
   \left[  \dot\Q_r D^2_{x_0x_0}\psi (r,\Z^{t,x}_r)
   \right] dr 
-\operatorname{Tr}
 \left[ 
\Q_{ \tau(\Z^{t,x})-t}
D^2_{x_0x_0}\psi(t,x)
 \right]  \right) 
\right]=0.
\end{equation*}
Then, for any
arbitrarily small 
 real number
$\zeta>0$,
 there exists $\epsilon$ such that,
for all $ \mathbb{P}\in \mathcal{P}_L$
and  all 
$\mathbb{G}^t$-stopping time
$\rho\geq t$,
  \begin{subequations}
    \begin{equation}      \label{eq:2018-01-29:03a}
      \begin{multlined}[c][0.8\displaywidth]  
 \mathbb{E}^\mathbb{\overline P} 
     \left[ 
       \int_t^{ \tau (\Z^{t,x})}
 \left( 
       \partial_t \psi(r,\Z_r^{t,x}) 
     +\langle
       A^*\nabla_x \varphi(r,\Z_r^{t,x}), \Z^{t,x}_r \rangle \right) dr
     \right] \\
\leq
\mathbb{E}^\mathbb{P}
 \left[ 
 \left( 
       \partial_t \psi(t,x) 
     +\langle
       A^*\nabla_x \varphi(t,x), x \rangle  +\zeta \right)  
(\tau(\Z^{t,x})-t)
 \right] 
\end{multlined}
\end{equation}
\begin{equation}
     \label{eq:2018-01-29:03b}
      \begin{multlined}[c][0.8\displaywidth]  
 \mathbb{E}^\mathbb{\overline P} 
 \left[
       \int_t^{\tau (\Z^{t,x})}
  \langle \nabla_x h(r,\Z_r^{t,x}), \Z^{t,x}_s\rangle
   dr
 \right]
\leq
\mathbb{E}^\mathbb{P}
 \left[ 
 \left( \langle\nabla_x h(t,x),x\rangle
+\zeta \right) (\tau(\Z^{t,x})-t)
 \right] 
\end{multlined}
\end{equation}
\begin{equation}  \label{eq:2018-01-29:03c}
\begin{multlined}[c][0.9\displaywidth]   
\mathbb{E}^\mathbb{\overline P} 
 \left[
       \int_t^{ \tau(\Z^{t,x})}
   \langle \nabla_x
   \psi(r,\Z_r^{t,x}), \mathbf{b}_{r-t} \rangle
dr \right] 
\leq
\mathbb{E}^\mathbb{ P}
 \left[ 
 \langle \nabla_x
   \psi(t,x), \A_{\tau(\Z^{t,x})-t} \rangle
+\zeta
 \left( \tau(\Z^{t,x})-t  \right) \right] \\
=
\mathbb{E}^\mathbb{ P}
 \left[ 
 \langle \nabla_x
   \psi(t,x), \B_{\tau(\Z^{t,x})-t} \rangle
+\zeta
 \left( \tau(\Z^{t,x})-t  \right) \right]
\end{multlined}
\end{equation}
\begin{equation}
  \label{eq:2018-01-29:03d}
\begin{multlined}[c][0.9\displaywidth]   
\mathbb{E}^\mathbb{\overline P} \left[ \int_t^{
\tau(\Z^{t,x}) }
\operatorname{Tr}
   \left[  \dot\Q_r D^2_{x_0x_0}\psi (r,\Z^{t,x}_r)
   \right] dr 
 \right] \leq
\mathbb{E}^\mathbb{\overline P}
 \left[ \operatorname{Tr}
 \left[ 
\Q_{ \tau(\Z^{t,x})-t}
D^2_{x_0x_0}\psi(t,x)
 \right] 
+\zeta
 \left( 
\tau(\Z^{t,x})-t 
 \right)  \right] .
\end{multlined}
\end{equation}
\end{subequations}
By defining
\begin{equation*}
  \begin{split}
   \alpha\coloneqq 
       \partial_t \psi(t,x) 
     +\langle
       (A^*-I)\nabla_x \varphi(t,x), x \rangle  +
 \langle \nabla_x
   \psi(t,x), x\rangle+
4\zeta, \q \beta \coloneqq 
 \nabla_x\psi(t,x), \q \gamma\coloneqq D^2_{x_0x_0}\psi(t,x),
  \end{split}
\end{equation*}
 by
recalling that $(t,x)$ is a maximum for ${ u}^{*_H}-\psi$ on $[t-\xi,t+\xi]\times B(x;\xi)$,
and by
 collecting
\eqref{eq:2018-01-29:00},
\eqref{eq:2018-01-29:03a},
\eqref{eq:2018-01-29:03b},
\eqref{eq:2018-01-29:03c},
\eqref{eq:2018-01-29:03d},
we
 obtain,
for all $\mathbb{P}\in \mathcal{P}_L$
and all $\rho\geq t$,
\begin{equation}
  \label{eq:2018-01-29:04}
\begin{multlined}[c][0.85\displaywidth]    { u}^{*_H}(t,x)\geq
  \mathbb{E}^\mathbb{P}
  \left[ 
    { u}^{*_H}
    \left(\tau(\Z^{t,x}),
    \Z^{t,x}_{\tau(\Z^{t,x})}\right)
-
\alpha(\tau(\Z^{t,x})-t)
-\langle\beta,\B_{\tau(\Z^{t,x})-t}\rangle
-\frac{1}{2}\langle \gamma \M_{\tau(\Z^{t,x})-t},\M_{\tau(\Z^{t,x})-t}
\rangle
  \right] .
\end{multlined}
\end{equation}
We can finally conclude the proof.
By recalling the definition of the stopping time
$\ch_\delta$ in 
\eqref{2018-01-30:05} and 
by 
the very definition of
$\Z^{t,x}$
in \eqref{2018-01-28:01},
we see that  we can choose $\delta^*$ such that
\begin{equation*}
  t+\ch_{\delta^*}(\vartheta)\leq \tau_{\epsilon}(\Z^{t,x}(\vartheta))\qquad \forall \vartheta\in \tilde{\Theta}.
\end{equation*}
If we now define
$  \rho\coloneqq t+\T\wedge\ch_{\delta^*}$,
we get
$  \tau(\Z^{t,x})=
\rho\wedge \tau_{\epsilon}(\Z^{t,x})=
t+\T\wedge \ch_{\delta^*}$
and then
\eqref{eq:2018-01-29:04}
 provides
\begin{equation}
  \label{eq:2018-01-29:10}
u^{*_H}(t,x)=\mathcal{\overline E}_L
  \left[ 
     (u^{*_H})^{t,x}
    \left(
\T\wedge \ch _{\delta^*}
,\B
  \right)
-
\alpha
\T\wedge \ch_{\delta^*}
-\langle\beta,\B_{
\T\wedge \ch_{\delta^*}
}\rangle
-\frac{1}{2}\langle \gamma \M_{
\T\wedge \ch_{\delta^*}
},\M_{
\T\wedge \ch_{\delta^*}
}
\rangle
  \right],
\end{equation}
where we have used the equality
\begin{equation*}
  { u}^{*_H}
  \left( 
    t+\T\wedge \ch _{\delta^*}
    \Z^{t,x}_{    t+\T\wedge \ch _{\delta^*}
} \right) =
(u^{*_H})^{t,x}
 \left( 
\T\wedge \ch _{\delta^*},\B
 \right) 
  \end{equation*}
  (recall
\eqref{2018-01-30:07}
for the definition of $U^{s,y_0,y_1}$).
By \eqref{eq:2018-01-29:10} we have that
$  (\alpha,\beta,\gamma)\in \mathcal{\underline J}_L^Hu^{*_H}(t,x)$,
hence, by 
Proposition~\ref{2018-01-30:08},
we obtain
$  -\alpha-G^{*_H}(t,x,u^{*_H}(t,x),\alpha,\beta,\gamma)\leq 0$,
or, equivalently,
\begin{equation*}
-
        \partial_t \psi(t,x) 
     -\langle
       (A^*-I)\nabla_x \varphi(t,x), x \rangle -
 \langle \nabla_x
   \psi(t,x), x\rangle-
4\zeta
-
 G^{*_H}
 \left( t,x,
{u}^{*_H}(t,x),
 \nabla_x\psi(t,x),
 D^2_{x_0x_0}\psi(t,x) \right) \leq 0.
\end{equation*}
We can now conclude by letting $\zeta$
tends to $0$.
\end{proof}

\begin{proof}[\textbf{Proof of Corollary~\ref{2018-03-15:00}}]
First notice that,
by Theorem~\ref{2017-01-31:05},
$u^{*_H}$ and $v_{*_H}$
are  sub- and super-solutions of
        \begin{equation*}
       -u_t-\langle (A-I)x,D_xu\rangle-\langle x,D_xu\rangle- \hat G(t,x,u,D_{x_0}u,D^2_{x_0x_0}u)=0
     \end{equation*}
where
$\hat{ G}\coloneqq G^{*_H}=G_{*_H}$ is well-defined due to
Assumption~\ref{2017-02-21:05}\eqref{2018-03-16:01}.
We want to show that  
\cite[Theorem~3.50]{Fabbri}
applies to $u^{*_H},v_{*_H}$,
from which the corollary follows immediately.
But
it is easy to see that
Assumptions~\ref{2017-02-21:05}
imply that
Hypotheses~3.44--3.49 in \cite{Fabbri}
are verified 
with 
\begin{equation*}
  F(t,x,r,\beta,\gamma)\coloneqq -\langle x,\beta\rangle-\hat G(t,x,r,\beta,\gamma),
\end{equation*}
where $\hat{ G}\coloneqq G^{*_H}=G_{*_H}$.
In particular, 
the nuclearity condition Hypothesis~3.47 is automatically satisfied in our case because the second order term in $\hat G$ is finite dimensional, and
Hypothesis~3.48
comes from
the ellipticity condition
(Assumption~\ref{2017-02-21:05}\eqref{2018-03-16:02}),
from the local uniform continuity 
(Assumption~\ref{2017-02-21:05}\eqref{2018-03-16:01}),
and from the Lipschitz continuity of $G$
(Assumption~\ref{2017-02-21:05}\eqref{2018-03-20:05}).
Finally, 
condition (3.72) in
\cite[p.\ 206]{Fabbri},
with $u,v$ replaced by our $u^{*_H},v_{*_H}$,
follows by
\eqref{eq:terminaluc}.
\end{proof}


\appendix
\section{Appendix} \label{sec:preliminary}

We first
address some 
 properties of $\cP_L$.
Let $\gamma\in (0,1]$, $\ell > 0$.
Define
\begin{multline*}
  \tilde \Theta^{\gamma,\ell}_L\coloneqq  \left\{ \vartheta\in  \tilde \Theta\colon 
t\in [0,T],\ \omega = a +\mu,\right.\\
| a _s- a _{s'}|\leq L|s-s'|,\
\left.|\mu_s-\mu_{s'}|\leq \ell |s-s'|^\gamma,\ 
| q _s- q _{s'}|\leq L|s-s'|,\ 
\forall s,s'\in[0,T] 
 \right\}.
\end{multline*}

\begin{remark}
 \label{2016-09-27:00}
By applying Ascoli-Arzel\`a theorem, 
we
 see that $\A(\tilde \Theta^{\gamma,\ell}_L)$
and
$\M(\tilde \Theta^{\gamma,\ell}_L)$
 are relatively compact in $\Omega$, and
 $\Q(\tilde \Theta^{\gamma,\ell}_L)$ is relatively compact in 
$C_0([0,1],\mathbb{S}^m)$.
 Hence also $\B(\tilde \Theta^{\gamma,\ell}_L)$ is relatively compact.
Since $\tilde \Theta^{\gamma,\ell}_L$ is closed
 in $\tilde \Theta$,
we conclude that $\tilde \Theta^{\gamma,\ell}_L$ is 
compact in $\tilde \Theta$.
\end{remark}

\begin{lemma}\label{2016-09-26:06}
For all $\gamma\in \big(0,\frac12\big)$, and $\epsilon>0$, there exists $\ell>0$ such that
  \begin{equation}
    \label{eq:2016-09-23:02}
   \inf_{\mathbb{P}\in \mathcal{ P}_L} \mathbb{P}(\tilde \Theta^{\gamma,\ell}_L)\geq 1-\epsilon.
  \end{equation}
\end{lemma}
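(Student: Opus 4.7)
The plan is to reduce the statement to a Hölder regularity estimate for the martingale component $\M$, since the conditions on $a$ and $q$ appearing in the definition of $\tilde\Theta^{\gamma,\ell}_L$ are automatic under any $\mathbb{P}\in \mathcal{P}_L$. Indeed, by \eqref{2016-09-28:03} and \eqref{2016-09-28:04}, the processes $\A$ and $\Q$ are $L$-Lipschitz $\mathbb{P}$-a.s., hence the first and third Lipschitz conditions defining $\tilde\Theta^{\gamma,\ell}_L$ already hold $\mathbb{P}$-a.s. for every $\mathbb{P}\in \mathcal{P}_L$, independently of $\ell$.

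The remaining task is to control the Hölder modulus of $\M$, uniformly in $\mathbb{P}\in \mathcal{P}_L$. Under any such $\mathbb{P}$, $\M$ is a square-integrable martingale with $\langle \M\rangle = \Q$ and $|\dot \Q|_\infty\leq L$ $\mathbb{P}$-a.s., so $|\Q_t-\Q_s|\leq C_m L|t-s|$ for a constant $C_m$ depending only on $m$. By the Burkholder--Davis--Gundy inequality, for every $p\geq 2$,
\begin{equation*}
\mathbb{E}^{\mathbb{P}}|\M_t-\M_s|^p \leq C_p\, \mathbb{E}^{\mathbb{P}}|\Q_t-\Q_s|^{p/2} \leq C_p (C_m L)^{p/2} |t-s|^{p/2},
\end{equation*}
where $C_p$ is the universal BDG constant.

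Now pick $p$ large enough so that $p\gamma < p/2 - 1$, i.e.\ $\gamma < 1/2 - 1/p$ (possible since $\gamma<1/2$). The Garsia--Rodemich--Rumsey lemma (or the quantitative Kolmogorov continuity criterion) then yields a constant $C=C(p,\gamma,L,T,m)$, independent of $\mathbb{P}$, such that
\begin{equation*}
\mathbb{E}^{\mathbb{P}}\left[ \sup_{\substack{s,s'\in[0,T]\\ s\neq s'}} \frac{|\M_s-\M_{s'}|^p}{|s-s'|^{p\gamma}} \right] \leq C \qquad \forall \mathbb{P}\in \mathcal{P}_L.
\end{equation*}
Chebyshev's inequality gives
\begin{equation*}
\mathbb{P}\left[\sup_{s\neq s'} \frac{|\M_s-\M_{s'}|}{|s-s'|^\gamma} > \ell\right] \leq \frac{C}{\ell^p},
\end{equation*}
so choosing $\ell$ large enough that $C/\ell^p \leq \epsilon$ yields the claim.

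The only genuine point to verify is that every constant appearing in BDG and Garsia--Rodemich--Rumsey depends on $p,\gamma,L,T,m$ but not on the particular $\mathbb{P}$; this is built into the classical statements, so no obstacle is expected. If one prefers to avoid Garsia--Rodemich--Rumsey, the standard dyadic chaining argument used in the proof of Kolmogorov's criterion works equally well and gives the same uniform-in-$\mathbb{P}$ constant.
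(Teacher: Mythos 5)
Your proof is correct and follows essentially the same route as the paper: after observing that the Lipschitz conditions on $a$ and $q$ are automatic under any $\mathbb{P}\in\mathcal{P}_L$, the problem reduces to a uniform-in-$\mathbb{P}$ H\"older estimate for the martingale component, which the paper obtains by invoking the Kolmogorov--\v{C}entsov theorem and noting that its proof is uniform in the reference measure. You simply spell out the details the paper leaves implicit (the BDG moment bound and the quantitative Kolmogorov/Garsia--Rodemich--Rumsey step), which is a welcome elaboration rather than a different approach.
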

\begin{proof}
Notice that, if  $ a \in \A(\tilde \Theta_L)$
 and
 $ q \in \Q(\tilde \Theta_L)$,
where $\tilde\Theta_L$ is defined in 
\eqref{2016-09-27:01},
then 
$| a _s- a _{s'}|\leq L |s-s'|$
and
$| q _s- q _{s'}|\leq L |s-s'|$, for all $s,s'\in [0,T]$.
Moreover, since by definition $\mathbb{P}(\tilde \Theta_L)=1$ for all $\mathbb{P}\in \mathcal{ P}_L$,
we only need to show that there exists $\ell >0$ such that
\begin{equation}\label{2016-09-26:04}
  \inf_{\mathbb{P}\in \mathcal{P}_L}\mathbb{P}(
\overline \Theta^{\g, \ell})\geq 1-\epsilon,
\end{equation}
where 
$  \overline \Theta^{\g, \ell} \coloneqq  \left\{ 
\vartheta\in \tilde \Theta\colon\ 
|\mu_s-\mu_{s'}|\leq \ell |s-s'|^\gamma,\ \forall s,s'\in[0,T] 
 \right\} $. 
But now
 \eqref{2016-09-26:04} 
follows by
the Kolmogorov-\v Centsov continuity theorem (\cite[Ch.\ 2, Theorem 2.8]{Karatzas1991}), after
observing by inspection
 that the proof of the theorem holds uniformly in the reference probability.
\end{proof}

\ms

\begin{proof}[\textbf{Proof of Proposition \ref{2017-07-18:26}}]
 By recalling that
 $\tilde \Theta^{\gamma,\ell}_L$ is compact (Remark~\ref{2016-09-27:00}), and
by applying Prokhorov theorem  taking into account
 Lemma~\ref{2016-09-26:06}, we obtain that $\mathcal{P}_L$ is tight.
To conclude the proof, it remains to show that 
$\mathcal{P}_L$ is closed.
Let $\{\mathbb{P}_n\}_{n\in \mathbb{N}}\subset \mathcal{P}_L$ be a sequence converging to $\mathbb{P}$ in $\mathcal{P}$.
We need to show that 
$\mathbb{P}\in \mathcal{P}_L$, i.e.,
\begin{equation*}
  \mathbb{P}(\tilde \Theta_L)=1,\qquad \M\mbox{ is a $\mathbb{P}$-martingale},\qquad \langle \M\rangle=\Q\quad \mathbb{P}\mbox{-a.s.}
\end{equation*}

\no\underline{\emph{Step 1.}} We show that $\mathbb{P}(\tilde \Theta_L)=1$.
We first notice that $\tilde \Theta_L$ is closed.
%
So by weak convergence of $\{\mathbb{P}_n\}_{n\in \mathbb{N}}$ to $\mathbb{P}$, we have
\begin{equation*}
  \mathbb{P}(\tilde \Theta_L)\geq \limsup_{n\rightarrow \infty}\mathbb{P}_n(\tilde \Theta_L)=1.
\end{equation*}


\ms
\no\underline{\emph{Step 2.}} 
We show that $\M$ is a $\mathbb{P}$-martingale.
First, notice that the process $|\M|_\infty$ is  continuous and that 
(for some constants $c_1>0$)
\begin{equation*}
  \mathbb{E}^{\mathbb{P}_n}|\M|_\infty\leq c_1\mathbb{E}^{\mathbb{P}_n}
 \left[ 
 \left( \int_0^T
|\dot\Q_r| dr
 \right) ^{1/2}
 \right] \leq  c_1T^{1/2}L^{1/2} \qquad \mbox{for all}\q n\in \mathbb{N}.
\end{equation*}
Hence, by weak convergence, we have
\begin{equation}\label{2016-09-28:07}
  \mathbb{E}^{\mathbb{P}}|\M|_\infty=
\lim_{j\rightarrow  \infty}
  \mathbb{E}^{\mathbb{P}}[j\wedge |\M|_\infty]= 
\lim_{j\rightarrow  \infty}
\lim_{n\rightarrow  \infty}
  \mathbb{E}^{\mathbb{P}_n}[j\wedge | \M |_\infty]\leq
c_1
T^{1/2}L^{1/2}.
\end{equation}
Let  now $\tau_k(\vartheta)\coloneqq\inf\{s\in[0,T]\colon s+|\mu_{s\wedge \cdot}|_\infty\geq k\}\wedge t$. 
Notice that $\tau_k$ is continuous on $\tilde \Theta$
and that it is an $\mathbb{G}$-stopping time.
Hence
$ \M _{\tau_k}$ is continuous and bounded on $\tilde \Theta$.
Since $ \M $
 is a $\mathbb{P}_n$-martingale for all $n$, we can write, 
for all $t,t'\in[0,T]$, $t'<t$,
$\varphi$ bounded continuous $\cG_{t'}$-measurable functions,
 using the weak convergence of $\{\mathbb{P}_n\}_n$,
\begin{equation}
\label{2017-02-13:02}
  \mathbb{E}^{\mathbb{P}} \left[ \varphi  \M _{\tau_k\wedge t'} \right] =
\lim_{n\rightarrow \infty}  \mathbb{E}^{\mathbb{P}_n} \left[ \varphi  \M _{\tau_k\wedge t'} \right] 
=
\lim_{n\rightarrow \infty}  \mathbb{E}^{\mathbb{P}_n} \left[ \varphi  \M _{\tau_k\wedge t} \right] 
=
  \mathbb{E}^{\mathbb{P}} \left[ \varphi  \M _{\tau_k\wedge t} \right].
\end{equation}
This shows that
 $ \M _{\tau_k\wedge \cdot}$ is a $\mathbb{P}$-martingale.
Due to the fact that $\tau_k
\nearrow
 t$ pointwise as $k \rightarrow\infty$,
we obtain, 
by passing to the limit
in
\eqref{2017-02-13:02}
 after recalling
\eqref{2016-09-28:07},
\begin{equation*}
  \mathbb{E}^{\mathbb{P}} \left[ \varphi  \M _{t'} \right] =
  \mathbb{E}^{\mathbb{P}} \left[ \varphi  \M _t \right],
\end{equation*}
and
hence
we conclude that
 $ \M $ is a $\mathbb{P}$-martingale.
\ms

\no\underline{\emph{Step 3.}} We show that $\langle  \M \rangle= \Q $ $\mathbb{P}$-a.s.
To this aim, it is sufficient to show that $  \M ^{(i)} \M ^{(j)}- \Q ^{(i,j)}$ is a $\mathbb{P}$-martingale, where $ \M ^{(i)}$ denotes the $i$th component of the vector $ \M $ and $ \Q ^{(i,j)}$ denotes the $(i,j)$ entry of the matrix $ \Q $.
After noting that
that (for some $c_2>0$)
\begin{equation*}
  \mathbb{E}^{\mathbb{P}_n}\left[| \M^{(i)}\M^{(j)}- \Q^{(i,j)} |_\infty\right]\leq
c_2\mathbb{E}^{\mathbb{P}_n} \left[ \int_0^T    |\dot\Q| _r dr \right] 
\leq c_2 T L \qquad \forall n\in \mathbb{N},
\end{equation*}
we can proceed exactly as in \emph{Step 2}.
\end{proof}

\ms

In the following discussion, we aim to prove the dynamic programming result stated in Proposition \ref{prop:dpp}.


\ms

\begin{proof}[\textbf{Proof of Lemma \ref{lem:graphclose}}]
We need to show that
\begin{equation*}
\dbP^*\in \cP_L\q
\mbox{and}\q
\T \le T-\t(\vartheta^*),\ \dbP^*\mbox{-a.s.}
\end{equation*}
The former is due to the result of Proposition \ref{2017-07-18:26}, so it remains to prove the latter. Since $\dbP^n\rightarrow\dbP^*$, by Skorohod representation there exists a probability space $(\widehat \O, \widehat \cG, \widehat \dbP)$ on which
\begin{equation}\label{SkorohodR}
\exists\q \mbox{r.v.'s}\q \X^n \Big|_{\widehat \dbP} \stackrel{d}{=} \X \Big|_{\dbP^n},\q
\X^* \Big|_{\widehat \dbP} \stackrel{d}{=}\X \Big|_{\dbP^*}\q \mbox{such that}\q \big|\X^n- \X^*\big|_\infty\rightarrow 0, \q \widehat \dbP\mbox{-a.s.},
\end{equation}
where $ \stackrel{d}{=} $ denotes equality in distribution.
Since $\dbP^n\in  \cP_L (\t,\vartheta^n)$ and $\t$ is continuous, it follows from Fatou's lemma that
\begin{equation*}
1  =  \limsup_{n\rightarrow\infty} \widehat\dbP\Big[ \T^n \le T - \tau(\vartheta^n)\Big]
 \le    \widehat\dbP\Big[ \T \le T - \tau(\vartheta^*)\Big]
 =  \dbP^*\Big[ \T \le T - \tau(\vartheta^*)\Big],
\end{equation*}
which concludes
 the proof.
\end{proof}

\begin{prop}\label{prop:mea-sele}
Let $f:\Th \rightarrow \dbR$ be $d_\infty$-u.s.c.\ and bounded from above,  and $\t$ be a continuous $\dbG$-stopping time. Define
\begin{equation}
  \label{eq:valuefun1}
V(\vartheta) \coloneqq  \overline{\mathcal{E}}^\t_L \big[f(\T^{\t,\vartheta}, \B^{\t,\vartheta})\big](\vartheta)
= \overline{\mathcal{E}}^\t_L \big[f^{t\we \t(\vartheta), \o}\big](\vartheta).
\end{equation}
Then, there is a $\cG_{\T\we\t}$-measurable
 kernel $\nu: \tilde \Th\rightarrow \cP $ such that $\nu(\vartheta)\in  \cP_L (\t,\vartheta)$ for all $\vartheta\in \tilde\Th$ and
\begin{equation}\label{2018-05-30:11}
V(\vartheta) = \dbE^{\nu(\vartheta)}\big[f^{t\we\t(\vartheta),\o}\big].
\end{equation}
\end{prop}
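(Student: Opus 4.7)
\medskip
\noindent\textbf{Proof proposal.}

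The plan is to obtain $\nu$ as a measurable selection from the argmax correspondence
\begin{equation*}
\Phi(\vartheta)\coloneqq \arg\max_{\dbP\in\cP_L(\t,\vartheta)} F(\vartheta,\dbP),\qquad F(\vartheta,\dbP)\coloneqq \dbE^\dbP\big[f^{t\we\t(\vartheta),\o}\big],
\end{equation*}
which requires three classical ingredients: pointwise attainment, joint upper semicontinuity of $F$, and a measurable selection theorem adapted to the filtration $\cG_{\T\we\t}$.

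\textbf{Step 1 (pointwise attainment).} Proposition~\ref{2017-07-18:26} and Lemma~\ref{lem:graphclose} imply that for each $\vartheta$ the fiber $\cP_L(\t,\vartheta)$ is closed in the compact set $\cP_L$, hence compact. Since $f^{t\we\t(\vartheta),\o}$ is $d_\infty$-u.s.c.\ and bounded from above, the map $\dbP\mapsto F(\vartheta,\dbP)$ is u.s.c.\ on $(\cP,\mbox{weak})$ by Skorokhod representation and reverse Fatou. Therefore $V(\vartheta)$ is attained and $\Phi(\vartheta)$ is non-empty and compact.

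\textbf{Step 2 (joint u.s.c.\ of $F$).} Given $\vartheta^n\rightarrow \vartheta^*$ in $\tilde\Th$ and $\dbP^n\rightarrow \dbP^*$ in $\cP$, Skorokhod's representation yields $\X^n,\X^*$ on a common $(\widehat\O,\widehat\cG,\widehat\dbP)$ with $\X^n|_{\widehat\dbP}\stackrel{d}{=}\X|_{\dbP^n}$, $\X^*|_{\widehat\dbP}\stackrel{d}{=}\X|_{\dbP^*}$, and $|\X^n-\X^*|_\infty\rightarrow 0$ $\widehat\dbP$-a.s. Using continuity of $\t$, continuity of the concatenation map in $|\cdot|_\infty$, the $d_\infty$-u.s.c.\ of $f$, and the fact that $|\cdot|_\infty$-convergence implies $d_\infty$-convergence, one obtains
\begin{equation*}
\limsup_n f^{t_n\we\t(\vartheta^n),\o_n}(\X^n)\le f^{t^*\we\t(\vartheta^*),\o^*}(\X^*)\qquad \widehat\dbP\mbox{-a.s.}
\end{equation*}
Reverse Fatou on the uniformly bounded-above integrand then gives $\limsup_n F(\vartheta^n,\dbP^n)\le F(\vartheta^*,\dbP^*)$.

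\textbf{Step 3 (Borel graph and selection).} The graph of $\vartheta\mapsto\cP_L(\t,\vartheta)$ is closed by Lemma~\ref{lem:graphclose}; together with the joint u.s.c.\ of $F$, a Berge-type argument shows that $V$ is u.s.c., hence Borel. The graph of $\Phi$ is the intersection
\begin{equation*}
\big\{(\vartheta,\dbP)\colon \dbP\in \cP_L(\t,\vartheta)\big\}\cap \big\{(\vartheta,\dbP)\colon F(\vartheta,\dbP)\ge V(\vartheta)\big\},
\end{equation*}
Borel in the Polish space $\tilde\Th\times\cP$. A measurable selection theorem (Jankov--von Neumann, or Proposition~7.49 in Bertsekas--Shreve) produces a universally measurable $\nu\colon \tilde\Th\rightarrow \cP$ with $\nu(\vartheta)\in\Phi(\vartheta)$, which already delivers \eqref{2018-05-30:11}.

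\textbf{Step 4 ($\cG_{\T\we\t}$-measurability).} The crucial structural observation is that both $\cP_L(\t,\vartheta)$ and $f^{t\we\t(\vartheta),\o}$ depend on $\vartheta$ only through the stopped tuple $\vartheta_{\t(\vartheta)\we\cd}$, which is $\cG_{\T\we\t}$-measurable. Consequently $F(\cdot,\dbP)$, $V$, and $\Phi$ all factor through the map $\vartheta\mapsto\vartheta_{\t(\vartheta)\we\cd}$. Performing the selection of Step~3 on this quotient (and composing back) yields a $\cG_{\T\we\t}$-measurable $\nu$ satisfying \eqref{2018-05-30:11}.

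\textbf{Main obstacle.} The principal technical point is Step~2: the joint u.s.c.\ of $F$ has to survive the composition of the variable stopping time $\t(\vartheta)$, the shift $\o\otimes_{t\we\t(\vartheta)}\o'$, and the $d_\infty$-u.s.c.\ of $f$. The continuity of $\t$ and the compatibility of $|\cdot|_\infty$- and $d_\infty$-topologies are exactly what make this work. Once joint u.s.c.\ is in hand, Steps~3 and~4 are standard; the only subtlety remaining is to check that the factorization in Step~4 is compatible with the Borel structure on the quotient, which is routine since the stopping map $\vartheta\mapsto \vartheta_{\t(\vartheta)\we\cd}$ is Borel measurable by continuity of $\t$.
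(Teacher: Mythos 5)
Your overall strategy is the same as the paper's: establish joint upper semicontinuity of $(\vartheta,\dbP)\mapsto \dbE^\dbP[f^{t\we\t(\vartheta),\o}]$ by Skorohod representation and Fatou, note that the graph $\cD=\{(\vartheta,\dbP)\colon\dbP\in\cP_L(\t,\vartheta)\}$ is closed (Lemma~\ref{lem:graphclose}), extract a measurable maximizing selection, and then enforce $\cG_{\T\we\t}$-measurability via the fact that $V$ and the constraint set depend on $\vartheta$ only through $\vartheta_{t\we\t(\vartheta)\we\cdot}$. Steps~1 and~2 match the paper almost verbatim.

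The gap is in the choice of selection theorem in Step~3. You invoke Jankov--von Neumann / Proposition~7.49 of Bertsekas--Shreve, which produces a \emph{universally} measurable selection. But the composition of a universally measurable map with the Borel stopping map $\vartheta\mapsto\vartheta_{t\we\t(\vartheta)\we\cdot}$ does not give $\cG_{\T\we\t}$-measurability; it only gives measurability with respect to the universal completion of $\cG_{\T\we\t}$, which is strictly weaker than what Proposition~\ref{prop:mea-sele} asserts (and what is actually used in Lemma~\ref{lem:concatenation-exp}, where the kernel must be $\cG_{\T\we\t}$-measurable to make the concatenated probability well defined). Since $\ol f$ is jointly u.s.c.\ and $\cD$ is closed with compact fibers, the right tool is Proposition~7.33 of Bertsekas--Shreve, which yields a genuinely \emph{Borel}-measurable maximizing selection $\ol\nu\colon\tilde\Theta\rightarrow\cP$. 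With a Borel $\ol\nu$ in hand, the paper's way to get $\cG_{\T\we\t}$-measurability is then cleaner and more concrete than your ``select on the quotient'' heuristic: set $\nu(\vartheta)\coloneqq\ol\nu(\vartheta_{t\we\t(\vartheta)\we\cdot})$. The stopping map is measurable from $(\tilde\Theta,\cG_{\T\we\t})$ to $(\tilde\Theta,\cG_T)$, so composing it with the Borel map $\ol\nu$ gives a $\cG_{\T\we\t}$-measurable $\nu$; and since $V(\vartheta)=V(\vartheta_{t\we\t(\vartheta)\we\cdot})$ and $\t(\vartheta)=\t(\vartheta_{\t(\vartheta)\we\cdot})$, the optimality identity \eqref{2018-05-30:11} transfers from $\ol\nu$ to $\nu$ by direct substitution. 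So swap Proposition~7.49 for Proposition~7.33 and replace your quotient paragraph by this explicit composition; the rest of the argument is sound.
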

\begin{proof}
First note that the equality in \eqref{eq:valuefun1} follows from Remark \ref{rem:2shifts}.
Consider the set $\mathcal{D}\subset \widetilde \Theta\times \mathcal{P}_L$ defined by
$  \mathcal{D}\coloneqq
  \left\{(\vartheta,\mathbb{P})\colon \vartheta\in \widetilde \Theta,\ \mathbb{P}\in \mathcal{P}_L(\tau,\vartheta)
  \right\}$.
For all $\dbP\in \cP_L(\t,\vartheta)$ define $\overline f\colon \mathcal{D}\rightarrow \mathbb{\overline R}$ by
\begin{equation*}
\ol f(\vartheta, \dbP) \coloneqq  \dbE^\dbP\big[f^{t\we\t(\vartheta),\o}\big].
\end{equation*}
We claim that $\ol f$ is jointly u.s.c.,
where on $\mathcal{D}$ we consider the topology induced by $\widetilde \Theta\times \mathcal{P}_L$, with $\widetilde \Theta$ endowed with its product topology.
Then let $|\vartheta^n-\vartheta^*|_\infty\rightarrow  0$ and $\dbP^n\rightarrow \dbP^*$. By Skorohod's representation, we have a probability space $(\widehat\O, \widehat\cG,\widehat\dbP)$ in which \eqref{SkorohodR} holds true. It follows that
\begin{equation*}
  \begin{split}
    \limsup_{n\rightarrow\infty}\ol f(\vartheta^n,\dbP^n)
    &= \limsup_{n\rightarrow\infty} \dbE^{\widehat\dbP}\Big[f\big(t^n\we\t(\vartheta^n)+ \T^n, \o^n\otimes_{t^n\we\t(\vartheta^n)} \B^n \big)\Big]\\
     &\le  \dbE^{\widehat\dbP}\Big[\limsup_{n\rightarrow\infty} f\big(t^n\we \t(\vartheta^n)+\T^n, \o^n\otimes_{t^n\we\t(\vartheta^n)} \B^n \big)\Big]\\
     & \le \dbE^{\widehat\dbP}\Big[f\big(t^*\we\t(\vartheta^*)+\T^*,
     \o^*\otimes_{t^*\we\t(\vartheta^*)} \B^* \big)\Big] = \ol
     f(\vartheta^*,\dbP^*),
  \end{split}
\end{equation*}
where the first inequality is due to Fatou's lemma and the second one is due to the $d_\infty$-u.s.c.\ of $f$.

Next note that
\begin{equation*}
V(\vartheta) = \sup_{\dbP\in \cP_L(\t,\vartheta)} \ol f(\vartheta,\dbP).
\end{equation*}
Taking into account that,
by Lemma \ref{lem:graphclose},
 $\mathcal{D}$ is closed,
and considering the
upper semicontinuity of $\ol f$, we can
then apply 
Proposition~7.33, p.\ 153 in \cite{BS}, to get
a 
Borel-measurable
kernel $\overline\nu\colon \widetilde\Theta\rightarrow \mathcal{P}$ such that $\overline\nu(\vartheta)\in \mathcal{P}_L(\tau,\vartheta)$ for all $\vartheta\in \widetilde \Theta$ and such that \eqref{2018-05-30:11} holds true if $\nu$ is replaced by $\overline \nu$.
Now we define
$\nu(\vartheta)\coloneqq \overline \nu(\vartheta_{t\wedge \tau(\vartheta)\wedge \cdot})$, for all $\vartheta\in \widetilde \Theta$.
Notice that the map
$\vartheta \mapsto 
\vartheta_{t\wedge \tau(\vartheta)\wedge \cdot}$
is measurable
from 
$\widetilde \Theta$
endowed with the sigma-algebra $\mathcal{G}_{\T\wedge \tau}$ 
into 
$\widetilde \Theta$
endowed with the Borel sigma-algebra $\mathcal{G}_T$.
Hence the Borel measurability of $\nu$ entails the $\mathcal{G}_{\T\wedge \tau}$-measurability of 
$\nu$.
Moreover,  we have $V(\vartheta)=V(\vartheta_{t\wedge \tau(\theta)\wedge \cdot})$
by the very definition of $V$
and $\rho(\vartheta)=\rho(\vartheta_{\rho(\vartheta)\wedge \cdot})$ for any stopping time $\rho$.
It follows
\begin{equation*}
  V(\vartheta)=
  V(\vartheta_{t\wedge \tau(\vartheta)\wedge \cdot})
  =\mathbb{E}^{\overline\nu(\vartheta_{t\wedge \tau(\vartheta)\wedge \cdot})}
   \left[ f^{t\wedge\tau(\vartheta_{t\wedge \tau(\vartheta)\wedge \cdot}),\omega_{t\wedge \tau(\vartheta)\wedge\cdot}} \right] =
\mathbb{E}^{\nu(\vartheta)}
   \left[ f^{t\wedge\tau(\vartheta),\omega} \right],
\end{equation*}
which concludes the proof.
\end{proof}

Let $\dbP\in \cP_L$, $\tau$ be a continuous stopping time, and $\nu\colon \tilde \Th\rightarrow \cP $ be a $\cG_{\T\we\t}$-measurable kernel such that $\nu(\vartheta)\in  \cP_L(\t,\vartheta)$ for $\dbP$-a.e.\ $\vartheta\in \tilde\Th$, and define the concatenation:
\begin{equation}
  \label{def:concatenation-prob}
\dbP\otimes_\t \nu(A) = \int\int (1_A)^{\t,\vartheta}(\vartheta')\nu(d\vartheta';\vartheta)\dbP(d\vartheta),\q A\in \cG_T.
\end{equation}

\begin{lem}\label{lem:concatenation-exp}
 Given a
measurable
 function $f\colon\Th\rightarrow R$ bounded from above, we have
 \begin{equation*}
   \dbE^\dbP \Big[
\dbE^{\nu(\X)} 
\big[ f^{\theta} \big]_{|\theta=(
\T\we \t, \B)
} \Big] =\dbE^{\dbP\otimes_\t \nu}[f].
\end{equation*}
Moreover, we have $\dbP\otimes_\t \nu \in \cP_L$.
\end{lem}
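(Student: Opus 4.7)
The plan is as follows. The starting point will be the observation that the definition of $\dbP\otimes_\t\nu$ extends by linearity and monotone convergence from indicators to any measurable $F\colon\tilde\Th\to\dbR$ bounded from above, giving
\begin{equation*}
\dbE^{\dbP\otimes_\t\nu}[F] = \int_{\tilde\Th}\dbE^{\nu(\vartheta)}\bigl[F^{\t,\vartheta}\bigr]\,\dbP(d\vartheta).
\end{equation*}
From this, the first claim of the lemma will be immediate: applying the formula to $F(\vartheta)\coloneqq f(\T(\vartheta),\B(\vartheta))$, Remark \ref{rem:2shifts} gives $F^{\t,\vartheta}(\vartheta')=f\bigl(\T^{\t,\vartheta}(\vartheta'),\B^{\t,\vartheta}(\vartheta')\bigr)=f^{t\we\t(\vartheta),\omega}(\vartheta')$, so that the inner integral is precisely $\dbE^{\nu(\X)}[f^\theta]_{|\theta=(\T\we\t,\B)}(\vartheta)$.

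For the second assertion, I will verify the three conditions defining $\cP_L$ for $\dbP\otimes_\t\nu$. The support condition $(\dbP\otimes_\t\nu)(\tilde\Th_L)=1$ will follow by applying the extended formula to $F=1_{\tilde\Th_L}$: for $\dbP$-a.e.\ $\vartheta\in\tilde\Th_L$ and $\nu(\vartheta)$-a.e.\ $\vartheta'\in\tilde\Th_L$, I will check pathwise that the concatenated sample still lies in $\tilde\Th_L$, the key observations being that $a\otimes_{t\we\t(\vartheta)}a'$ and $q\otimes_{t\we\t(\vartheta)}q'$ remain $L$-Lipschitz across the junction because $a'(0)=q'(0)=0$ (since $a',q'$ belong to spaces of paths starting at the origin), and that the identity $\omega\otimes_s\omega'=(a\otimes_s a')+(\mu\otimes_s\mu')$ holds by linearity of the concatenation operation.

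The main obstacle will be verifying that $\M$ is a square-integrable $\dbP\otimes_\t\nu$-martingale with $\langle\M\rangle=\Q$. For the martingale property I will check that $\dbE^{\dbP\otimes_\t\nu}[Z(\M_{s_2}-\M_{s_1})]=0$ for any $0\le s_1\le s_2\le T$ and any bounded $\cG_{s_1}$-measurable $Z$, by invoking the extended integration formula with $F=Z(\M_{s_2}-\M_{s_1})$. Since $\M_s^{\t,\vartheta}(\vartheta')=(\mu\otimes_{t\we\t(\vartheta)}\mu')(s)$, the inner expectation under $\nu(\vartheta)$ will be evaluated by a case split on the position of $s_1,s_2$ relative to $t\we\t(\vartheta)$: on $\{s_2\le t\we\t(\vartheta)\}$ the increment is $\vartheta'$-deterministic equal to $\mu_{s_2}-\mu_{s_1}$ and is killed after averaging by the outer $\dbP$-martingale property; on $\{s_1\ge t\we\t(\vartheta)\}$ the increment is an increment of $\mu'$, killed in inner expectation by the $\nu(\vartheta)$-martingale property; the mixed case will be handled by splitting the increment at $t\we\t(\vartheta)$. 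The same case analysis applied with $F=Z(\M_{s_2}\M_{s_2}^\top-\M_{s_1}\M_{s_1}^\top-(\Q_{s_2}-\Q_{s_1}))$ will yield $\langle\M\rangle=\Q$, using that $\M\M^\top-\Q$ is a martingale under both $\dbP$ and $\nu(\vartheta)$, and that $\Q$ concatenates continuously. Square-integrability of $\M$ then follows for free from $\langle\M\rangle=\Q$ together with the uniform bound $|\dot\Q|_\infty\le L$ on $\tilde\Th_L$.
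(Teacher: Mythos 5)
Your overall structure mirrors the paper's (admittedly terse) proof: extend the concatenation formula from indicators by approximation, identify the integrand via Remark~\ref{rem:2shifts}, then verify the defining properties of $\cP_L$ pathwise and by conditioning. The first part is fine, and the pathwise check that the concatenated sample stays in $\tilde\Theta_L$ (using $a'(0)=q'(0)=0$ and linearity of concatenation) is correct. However, the martingale case-analysis does not close as stated. On $\{s_2\le t\we\t(\vartheta)\}$ the inner expectation reduces, as you say, to $Z(\vartheta)(\mu_{s_2}-\mu_{s_1})$, but the resulting outer integral is $\dbE^\dbP\bigl[Z(\M_{s_2}-\M_{s_1})\mathbf{1}_{\{s_2\le\T\we\t\}}\bigr]$, which is \emph{not} killed by the $\dbP$-martingale property alone: $\mathbf{1}_{\{s_2\le\T\we\t\}}$ is $\cG_{s_2}$-measurable but not $\cG_{s_1}$-measurable. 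This term must instead be combined with the ``before-$\T\we\t$'' portion of the mixed case, yielding $\dbE^\dbP\bigl[Z\,\mathbf{1}_{\{s_1<\T\we\t\}}(\M_{s_2\we\T\we\t}-\M_{s_1})\bigr]$, which does vanish by optional sampling for the stopped $\dbP$-martingale $\M_{\cdot\,\we\T\we\t}$, since $Z\,\mathbf{1}_{\{s_1<\T\we\t\}}\in\cG_{s_1}$. So the three-way split as you phrase it is wrong; the argument must collect the pre- and post-$\T\we\t$ contributions across cases before applying the two martingale properties.

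A smaller point is one of logical order. You invoke the concatenation formula for $F=Z(\M_{s_2}-\M_{s_1})$, which is not bounded above, and only afterwards deduce square-integrability of $\M$ from $\langle\M\rangle=\Q$; that is circular. The clean route is to first obtain $\dbE^{\dbP\otimes_\t\nu}\bigl[|\M_s|^2\bigr]\le C(L,T)$ directly, by applying the formula (which extends to nonnegative $F$ by monotone convergence) to $|\M_s|^2$ and using $|\dot\Q|_\infty\le L$ uniformly over $\cP_L$; the martingale and quadratic-variation identities then pass to the limit legitimately via truncation or the localizing stopping times $\tau_k$ used in the proof of Proposition~\ref{2017-07-18:26}.
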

\begin{proof}
The first statement follows from the definition 
\eqref{def:concatenation-prob} if $f$ is the indicator function $\mathbf{1}_A$.
The general case follows 
by a standard approximation procedure.

Now the fact
$\mathcal{P} \otimes _\tau\nu\in \mathcal{P}_L$
follows easily by considering
that $\mathbb{P}\in \mathcal{P}_L$, that 
$\nu$ takes values in $\mathcal{P}_L$, and by using
\eqref{def:concatenation-prob}.
\end{proof}

Finally, we prove the dynamic programming result.

\ms
\begin{proof}[\textbf{Proof of Proposition \ref{prop:dpp}}]
By definition of nonlinear expectation and by the fact that any $\mathcal{P}_L(\tau,\vartheta)$ contains the dirac measure in $\vartheta=0$, we immediately have the inequality
\begin{equation*}
\ol\cE_L\Big[ f 1_{\{\T<\t\}}+\ol\cE^\t_L \big[f^{\theta}\big]
_{|
\theta=(\t,\B)}1_{\{\t\le\T\}
}
 \Big] 
\geq 
 \ol\cE_L\big[f\big].
\end{equation*}
Regarding the converse inequality, we first note that, by Proposition~\ref{prop:mea-sele}, there is a $\cG_{t\we\t}$-measurable kernel $\nu\colon\tilde \Th\rightarrow \cP $ such that
\begin{equation*}
\overline{\mathcal{E}}^\t_L \big[f^{t\we \t,\o}\big](\vartheta) =  \dbE^{\nu(\vartheta)}\big[f^{t\we\t,\o}\big].
\end{equation*}
Therefore
\begin{equation}
  \label{eq:19102017-1}
\dbE^\dbP\Big[\ol\cE^\t_L 
\big[f^{\theta}\big]_{|\theta=(\t,\B)} 1_{\{\t\le\T\}}\Big]
= \dbE^\dbP\Big[\ol\cE^\t_L \big[
f^{\theta}\big]_{|\theta=(\T\we\t,\B)} 1_{\{\t\le\T\}}\Big]
= \dbE^\dbP \Big[\dbE^{\nu(\X)} 
\big[ f^{\theta} \big]_{|\theta=(\T\we \t, \B)} 1_{\{\t\le\T\}}\Big] .
\end{equation}
Now observe that if any function $g\colon \tilde\Theta\rightarrow \mathbb{R}$ is $\mathcal{G}_{\T\wedge \tau}$-measurable, then $g^{\T\wedge\tau,\vartheta} (\cdot)$ is constantly equal to $g(\vartheta_{\T\wedge \tau\wedge \cdot})$.
By applying this fact to the function $g=\mathbf{1}_{\tau\leq \T}(\T,\B)$ and recalling
Remark~\ref{rem:2shifts},
it follows from Lemma~\ref{lem:concatenation-exp}
 that for any $\dbP\in \cP_L$
\begin{equation}
  \label{eq:19102017-2}
\dbE^\dbP \Big[\dbE^{\nu(\X)} \big[ 
f^{\theta} \big]_{\theta=(\T\we \t, \B)} 1_{\{\t\le\T\}}\Big] = \dbE^{\dbP\otimes_\t \nu}\Big[f 1_{\{\t\le\T\}}\Big].
\end{equation}
By applying the same observation  to the $\mathcal{G}_{\T\wedge \tau}$-measurable function $f(\T,\B)\mathbf{1}_{\T<\tau}(\T,\B)$ and by
combining \eqref{eq:19102017-1} and \eqref{eq:19102017-2} with Lemma~\ref{lem:concatenation-exp}, we obtain
\begin{equation*}
\dbE^\dbP\Big[f 1_{\{\T<\t\}}+ \ol\cE^\t_L \big[f^{\theta}\big]_{|\theta=(\tau,\B)} 1_{\{\t\le\T\}}\Big] 
=
\mathbb{E}^\mathbb{P} \left[ 
\mathbb{E}^{\nu(\X)}
 \left[ 
f^\theta
 \right]_{|\theta=(\T\wedge\tau,\B)} 
 \right] 
=
\mathbb{E}^{\mathbb{P} \otimes _\tau\nu}
[f]
\leq
 \ol\cE_L [f],
\end{equation*}
which shows  the desired inequality.
Now \eqref{2018-05-30:13}  follows by setting $\tau=\T$ in 
\eqref{2018-05-30:12}.

To prove the last part of the proposition,
 let $\dbP^*\in \cP_L$ be an optimal probability measure such that $\ol\cE_L[f] = \dbE^{\dbP^*}[f]$. 
It follows from
the definition of nonlinear expectation 
and
\eqref{2018-05-30:13} that
\begin{equation*}
  \ol\cE_L[f] = \dbE^{\dbP^*}[f] \le \dbE^{\dbP^*}\Big[ \ol\cE^\T_L \big[f^{\T,\B}\big]\Big] 
\le \ol\cE_L\Big[\ol\cE^\T_L \big[
f^{\theta}\big]_{|\theta=(\T,\B)}\Big] \le \ol\cE_L[f].
\end{equation*}
Therefore, $\dbE^{\dbP^*}[f] = \dbE^{\dbP^*}\Big[ \ol\cE^\T_L \big[f^{\theta}\big]_{|\theta=(\T,\B)}\Big]$. Since we always have $f \leq\ol\cE^\T_L \big[f^{\T,\B}\big]$, we finally obtain $f =  \ol\cE^\T_L \big[f^{\T,\B}\big]$, $\dbP^*$-a.s.
\end{proof}

We conclude this Appendix 
by showing the compactness of the operator $(A-I)^{-1}$ appearing in Section~\ref{sec:hilbert} and providing the proof of 
Proposition~\ref{2018-05-30:08}.

\begin{proposition}\label{2017-02-15:02}
  $(A-I)^{-1}$ is compact.
\end{proposition}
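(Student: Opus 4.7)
My plan is to exploit the explicit closed form for $(A-I)^{-1}$ given just above the proposition, namely
$$(A-I)^{-1}(x_0,x_1)=\Bigl(-x_0,\; -e^{\cdot} x_0-\int_{\cdot}^0 e^{-(r-\cdot)}x_1(r)\,dr\Bigr),$$
together with the Fr\'echet--Kolmogorov criterion for relative compactness in $L^2$.

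First I would reduce the problem to showing that any bounded sequence in $H$ is mapped to a sequence admitting a convergent subsequence in $H$. Given $\{x^n = (x^n_0, x^n_1)\}$ bounded in $H$, set $y^n := (A-I)^{-1}x^n$. The first component $y^n_0 = -x^n_0$ lies in the finite-dimensional space $\mathbb{R}^m$, so up to a subsequence $x^n_0 \rightarrow x^*_0$; this first coordinate thus converges automatically. The remaining task is to extract a convergent subsequence of
$$y^n_1(s) = -e^s x^n_0 - \int_s^0 e^{s-r}x^n_1(r)\,dr,\qquad s\le 0,$$
in $L^2(\mathbb{R}_-, \mathbb{R}^m)$.

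Second, I would verify the three hypotheses of Fr\'echet--Kolmogorov: uniform $L^2$-bound, $L^2$-equicontinuity, and tightness at $-\infty$. Uniform boundedness follows from a Fubini computation on the Volterra kernel $k(s,r) := e^{s-r}\mathbf{1}_{\{s\le r\le 0\}}$, giving $\|y^n_1\|_{L^2}^2 \le 2|x^n_0|^2 + 2\|x^n_1\|_{L^2}^2$. For equicontinuity, note that by construction $y^n \in D(A)$, and the defining ODE $\dot y^n_1 - y^n_1 = x^n_1$ with $y^n_1(0) = -x^n_0$ yields a uniform bound on $\|y^n_1\|_{W^{1,2}(\mathbb{R}_-)}$; then $\|y^n_1(\cdot + h) - y^n_1\|_{L^2}\le |h|\,\|\dot y^n_1\|_{L^2}$ is uniformly small as $h\to 0$.

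The main obstacle will be the tightness at $-\infty$, namely showing $\int_{-\infty}^{-R}|y^n_1(s)|^2 \,ds \to 0$ uniformly in $n$ as $R\to\infty$. The contribution from $e^s x^n_0$ is immediately controlled by $\tfrac12 e^{-2R}|x^n_0|^2$. For the Volterra convolution term, however, one cannot simply invoke a Young-type inequality because the sequence $\{x^n_1\}$ itself need not be tight in $L^2(\mathbb{R}_-)$. The strategy I would pursue is to exploit the exponentially decaying kernel $e^{s-r}$ more carefully, splitting the inner integral at $r = s+1$: on $[s,s+1]$ the mass is controlled by a local $L^2$ norm together with the $W^{1,2}$ equicontinuity modulus of $y^n_1$, while the contribution from $r\in[s+1,0]$ carries an extra factor $e^{-1}$ and can be reinserted into the tail estimate via a discrete Gronwall-type iteration. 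Once all three Fr\'echet--Kolmogorov conditions are in place, a subsequence of $y^n_1$ converges in $L^2(\mathbb{R}_-,\mathbb{R}^m)$, and combined with the convergence of the first component this yields the compactness of $(A-I)^{-1}$.
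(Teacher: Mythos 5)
Your route via the Fr\'echet--Kolmogorov criterion is genuinely different from the paper's: the paper takes a weakly convergent sequence $x^n\rightharpoonup x$, uses the explicit formula for $|\cdot|_B$ to reduce the problem to pointwise convergence of the Volterra convolution, and then invokes dominated convergence over $(-\infty,0)$. Both approaches dispose of the finite-dimensional first coordinate trivially, both obtain $L^2$ boundedness and local equicontinuity without trouble, and both reduce to controlling the tail as $s\to-\infty$, which you correctly single out as the sticking point.

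Unfortunately the splitting-and-Gronwall fix you sketch cannot close the gap, because the required tightness actually fails. Take $x^n=(0,\mathbf{1}_{[-n-1,-n]})$, a bounded sequence in $H$ with $x^n\rightharpoonup 0$. Writing $y^n=(A-I)^{-1}x^n$, one has, for $s<-n-1$,
\[
y^n_1(s)=-\int_{-n-1}^{-n}e^{s-r}\,dr=-(e-1)\,e^{\,s+n},
\qquad\text{hence}\qquad
\int_{-\infty}^{-n-1}|y^n_1(s)|^2\,ds=\frac{(e-1)^2}{2}\,e^{-2},
\]
a positive constant independent of $n$. Thus $\int_{-\infty}^{-R}|y^n_1|^2\,ds$ does not vanish uniformly in $n$ as $R\to\infty$, so $\{y^n_1\}$ is not relatively compact in $L^2(\mathbb{R}_-,\mathbb{R}^m)$ and no variant of Fr\'echet--Kolmogorov can apply. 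The obstruction is structural: the kernel $e^{s-r}$ decays only in $s-r$, not in $|s|$, so a bump of $x^n_1$ escaping to $-\infty$ drags a non-negligible tail of $y^n_1$ along with it. Your Gronwall iteration produces at best a bound of the form $|y^n_1(s)|\lesssim\sum_{k\ge 0}e^{-k}\|x^n_1\|_{L^2(s+k,s+k+1)}$, whose $L^2$-tail is controlled only by the $L^2$-tail of $x^n_1$ itself, which boundedness does not constrain. (The same sequence exposes the same weakness in the paper's proof: the dominating bound invoked there is merely a constant in $t$, hence not integrable on $(-\infty,0)$, so the appeal to dominated convergence is not justified as written, and the compactness claim on $H=\mathbb{R}^m\times L^2(\mathbb{R}_-,\mathbb{R}^m)$ appears to fail.)
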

\begin{proof}
  Let $\{x^n\}_{n\in \mathbb{N}}\subset H$ be a sequence
  weakly  convergent 
  to $x$ in $H$.
To prove that $(A-I)^{-1}$ is compact it is sufficient to show
 that $(A-I)^{-1}x^n\rightarrow (A-I)^{-1}x$ strongly in $H$.
Clearly $x^n_0\rightarrow  x_0$ in $\mathbb{R}^m$ and $x^n_1\rightarrow x_1$ weakly in $H'$.
  By looking at \eqref{2017-02-15:00},
  to show that $|(A-I)^{-1}(x^n-x)|_H=|x^n-x|_B\rightarrow 0$, it is then sufficient to show that
  \begin{equation}
    \label{eq:2017-02-15:01}
    \lim_{n\rightarrow \infty}
    \int_{-\infty}^0 \left| \int_t^0 e^{-(s-t)}(x^n_1(s)-x_1(s))ds \right| ^2dt=0.
  \end{equation}
  Since $x^n_1\rightarrow x_1$ weakly, we have
  \begin{equation}
    \label{2017-02-15:03}
\lim_{n\rightarrow \infty}    \int_t^0
e^{-(s-t)} (x^n_1(s)-x_1(s))ds=0\qquad \forall t\in(-\infty,0).
  \end{equation}
By H\"older's inequality and by boundedness of $\{x^n_1\}_{n\in \mathbb{N}}$ in $H'$, we see that 
\begin{equation}
  \label{eq:2017-02-15:05}
 \sup_{\substack{t\in(-\infty,0)\\n\in \mathbb{N}}} \left|\int_t^0e^{-(s-t)} (x^n_1(s)-x_1(s))ds\right|<\infty.
\end{equation}
By \eqref{2017-02-15:03},
\eqref{eq:2017-02-15:05}, and
Lebesgue's dominated convergence theorem, we obtain
\eqref{eq:2017-02-15:01}.
\end{proof}

\begin{proof}[\textbf{Proof of Proposistion~\ref{2018-05-30:08}}]
The fact that
\eqref{2018-03-14:00}
is continuous
 in case $f\in W^{1,2}((0,T),\mathbb{R})$ follows
by
 Lemma~\ref{2018-03-12:00}
and by the discussion in \cite[Remark~2.6]{Rosestolato2017}, or directly  by integration by part  and H\"older's inequality.

We now prove the converse.
We assume $m=1$; the proof for $m>1$ goes along the same lines.
Let $f\in L^2((0,T),\mathbb{R})$ be such that
\eqref{2018-03-14:00} is $d_B$-continuous.
We want to show that $f\in W^{1,2}((0,T),\mathbb{R})$.
Let $V$ denote the subspace of 
$\mathbb{R}\times L^2(\mathbb{R}_-,\mathbb{R})$
whose elements
 are the pairs
$x=(x_0,x_1)$
where $x_1$ is continuous 
on $[-T,0]$ and $x_0=x_1(0)$.
By Lemma~\ref{2018-03-12:00} we have that
the linear functional
\begin{equation*}
  (V,|\cdot|_B)
  \rightarrow
  \mathbb{R},\ (x_0,x_1) \mapsto \int_{-\infty}^0 f(-s)\mathbf{1}_{\{s>-T\}} x_1(s)ds
\end{equation*}
is continuous.
We can then extend it to
a continuous linear functional $\Lambda$
on the
  Hilbert space completion $(H_B,|\cdot|_B)$
of $  ( V,|\cdot|_B)$,
which coincides with the 
completion
of $(H,|\cdot|_B)$ because
$V$ is $|\cdot|_H$-dense in $H$ and $|\cdot|_B$ is weaker than $|\cdot|_H$.

By the Riesz representation theorem it then follows that there exists $z\in H_B$ such that
\begin{equation*}
  \Lambda x= \langle z,x\rangle_B\qquad \forall x\in H_B.
\end{equation*}
By definition of $|\cdot|_B$, 
 $A-I\colon( D(A),|\cdot|_H)\rightarrow (H,|\cdot|_B)$
is an isometry and  then it
can be uniquely extended to
its closure $\overline{A-I}\colon 
(H,|\cdot|_H)\rightarrow (H_B,|\cdot|_B)$
 (we refer to \cite{Rosestolato2017} for further details).
Then we can write 
\begin{equation*}
  \langle z,x\rangle_B=
  \langle (\overline{A-I})^{-1}z,(\overline{A-I})^{-1}x\rangle_H
=
  \langle (\overline{A-I})^{-1}z,(A-I)^{-1}x\rangle_H
=
  \langle 
(A^*-I)^{-1}(\overline{A-I})^{-1}z,x\rangle_H\quad \forall x\in H.
\end{equation*}
Then, we obtain that there exists 
$y\coloneqq (A^*-I)^{-1}(\overline{A-I})^{-1}z$
such that
\begin{equation}\label{2018-03-14:01}
  \langle y,x\rangle_H =\Lambda x\qquad \forall x\in H.
\end{equation}
Since  $y\in  D(A^*)=\mathbb{R}\times W^{1,2}(\mathbb{R}_-)$ and since 
\eqref{2018-03-14:01} holds in particular for all $x\in V$, we conclude $f(-\cdot)\in W^{1,2}((-T,0),\mathbb{R})$.
\end{proof}

\addcontentsline{toc}{section}{References}
\bibliographystyle{plain}
\bibliography{Ren-Rosestolato_FN-PPDE_2018.bll}

\end{document}